\author{Hubert \sc{Klaja} \thanks{Laboratoire Paul Painlev\' e,
Universit\'e Lille 1, CNRS UMR 8524, B\^at. M2,
F-59655 Villeneuve \newline d'Ascq, France ; {\tt hubert.klaja@gmail.com}}} % 
\title{The numerical range and the spectrum of a product of two orthogonal projections}
\date{}
\theoremstyle{definition} \newtheorem{definition}{Definition}[section]
\theoremstyle{definition} \newtheorem{exemple}[definition]{Example}
\theoremstyle{remark}     \newtheorem{remarque}[definition]{Remark}
\theoremstyle{plain}      \newtheorem{theoreme}[definition]{Theorem}
\theoremstyle{plain}      \newtheorem{proposition}[definition]{Proposition}
\theoremstyle{plain}      \newtheorem{corollaire}[definition]{Corollary}
\theoremstyle{plain}      \newtheorem{lemme}[definition]{Lemma}
\theoremstyle{definition}
  \newcommand{\norme}[1]
    {\left\| #1 \right\|}
  \newcommand{\abs}[1]
    {\left\vert #1 \right\vert}
  \newcommand{\ps}[2]
    {\left\langle #1,#2 \right\rangle}
  \newcommand{\ind}[1]
    {\mathbf{1}_{#1}}
  \newcommand{\Range}[1]
    {\mathrm{Ran(}#1)}
  \newcommand{\Ker}[1]
    {\mathrm{Ker(}#1)}    
  \newcommand{\adherence}[1]{\overline{#1}}
  \newcommand{\I}{\mathfrak{i}}
  \newcommand{\B}[1]{\mathcal{B}(#1)}
  \newcommand{\N}{\mathbb{N}}
  \newcommand{\R}{\mathbb{R}}
  \newcommand{\C}{\mathbb{C}}
  \newcommand{\Ldeux}{L_2}
  \newcommand{\Linf}{L_\infty}
  \newcommand{\RaySpec}[1]{r(#1)}
  \newcommand{\Spec}[1]{\sigma(#1)} 
  \newcommand{\Specp}[1]{\sigma_p(#1)}
  \newcommand{\Span}[1]{\overline{span}\lbrace #1 \rbrace}
  \newcommand{\Sspan}[1]{span\lbrace #1 \rbrace}
  \newcommand{\NumRan}[1]{ W(#1) }
  \newcommand{\NumRay}[1]{ \omega(#1) }
  \newcommand{\Ellipse}[1]{\mathscr{E}(#1)}
  \newcommand{\Con}[2]{\mathrm{conv}_{#1}\lbrace #2 \rbrace }
  \newcommand{\FctionSupp}[2]{\rho_{#1}(#2)}
  \newcommand{\LdE}{L_2([0,1])}
  \newcommand{\Reel}[1]{\mathsf{Re}(#1)}
  \newcommand{\Kro}[2]{\delta_{#1,#2}}
  \newcommand{\UnitEq}{\sim}
  \newcommand{\F}{\mathcal{F}}
\begin{document}
\maketitle
\abstract{The aim of this paper is to describe the closure of the numerical range of the product of two orthogonal projections in Hilbert space as a closed convex hull of some explicit ellipses parametrized by points in the spectrum. Several improvements (removing the closure of the numerical range of the operator, using a parametrization after its eigenvalues) are possible under additional assumptions. An estimate of the least angular opening of a sector with vertex $1$ containing the numerical range of a product of two orthogonal projections onto two subspaces is given in terms of the cosine of the Friedrichs angle. Applications to the rate of convergence in the method of alternating projections and to the uncertainty principle in harmonic analysis are also discussed.

{\bf Keywords}: Numerical range; orthogonal projections; Friedrich angle; method of alternating projections; uncertainty principle; annihilating pair.

{\bf MSC 2010}: 47A12, 47A10.
}

\section{Introduction}
$ \; $

{\bf Background.} The numerical range of a Hilbert space operator $T \in \B{H} $ is defined as 
$
\NumRan{T}=\lbrace \ps{Tx}{x}, x \in H, \norme{x}=1 \rbrace .
$
It is always a convex set in the complex plane (the Toeplitz-Hausdorff theorem) containing in its closure the spectrum of the operator. Also, the intersection of the closure of the numerical ranges of all the operators similar to $T$ is precisely the convex hull of the spectrum of $T$ (Hildebrandt's theorem). We refer to the book \cite{Gustafson_Rao} for these and other facts about numerical ranges. Another useful property the numerical ranges have is the following recent result of Crouzeix  \cite{Crouzeix_2007}: for every $T \in \B{H} $ and every polynomial $p$ , we have
$
\norme{p(T)}\le 12 \sup_{z \in \NumRan{T}} \abs{p(z)}
$. 

\smallskip

{\bf The problem.} The main aim of this paper is to study
the numerical range $\NumRan{T} $ and the numerical radius, defined by
$
\NumRay{T}= \sup \lbrace \abs{z}, z \in \NumRan{T} \rbrace ,
$
of a product of two orthogonal projections $T = P_{M_2}P_{M_1}$. In what follows we denote by $P_M $ the orthogonal projection onto the closed subspace $M$ of a given Hilbert space $H$. We prove a representation of the closure of $\NumRan{T} $ as a closed convex hull of some explicit ellipses parametrized by points in the spectrum $\sigma(T)$ of $T$ and we discuss several applications.
We also study the relationship between the numerical range (numerical radius) of a product of two orthogonal projections and its spectrum (resp. spectral radius). Recall that 
%$\sigma(T)$ denotes the 
%spectrum of  and 
the spectral radius $\RaySpec{T}$ of $T \in \B{H}$ is defined as 
$\RaySpec{T}=\sup \lbrace \abs{z}, z \in \sigma(T) \rbrace  .$ 

\smallskip

{\bf Previous results.} Orthogonal projections in Hilbert space are basic objects of study in Operator theory. Products or sums of orthogonal projections, in finite or infinite dimensional Hilbert spaces, appear in various problems and in many different areas, pure or applied.
%as functional analysis, mathematical physics, numerical analysis, signal processing, statistics, and so on. 
We refer the reader to a book \cite{GalantaiBook} and two recent surveys \cite{Galantai, Bottcher_Spitkovsky_2010} for more information. The fact that the numerical range of a finite product of orthogonal projections is included in some sector of the complex plane with vertex at $1$ was an essential ingredient in the proof by Delyon and Delyon \cite{Delyon_1999} of a conjecture of Burkholder, saying that the iterates of a product of conditional expectations  are almost surely convergent to some conditional expectation in an $L^2$ space (see also \cite{Crouzeix_2008,Cohen_2007}). For a product of two orthogonal projections we know that the numerical range is included in a sector with vertex one and angle $\pi/6$ (\cite{Crouzeix_2008}). 

The spectrum of a product of two orthogonal projections appears naturally in the study of the rate of convergence in the strong operator topology of 
$(P_{M_2}P_{M_1})^n$ to $P_{M_1\cap M_2}$ (cf. \cite{Deustch_2001, Bauschke_Deutsch_Hundal_2009, Deustch_Hundal_2010_I, Deustch_Hundal_2010_II, Catalin_Sophie_Muller_2010, Catalin_Sophie_Muller_2010_bis, Catalin_Lyubich_2010}). This is a particular instance of von Neumann-Halperin type theorems, sometimes called in the literature the method of alternating projections. The following dichotomy holds (see \cite{Bauschke_Deutsch_Hundal_2009}):  either the sequence $(P_{M_2}P_{M_1})^n$  converge uniformly with an exponential speed to $P_{M_1\cap M_2} $ (if $1\notin \sigma(P_{M_2}P_{M_1})$), or the sequence of alternating projections $(P_{M_2}P_{M_1})^n$ converges arbitrarily slowly in the strong operator topology (if $1\in \sigma(P_{M_2}P_{M_1})$). We refer to \cite{Catalin_Sophie_Muller_2010, Catalin_Sophie_Muller_2010_bis} for several possible meanings of ``slow convergence''.

An occurrence of the numerical range of operators related to sums of orthogonal projections appears also in some Harmonic analysis problems. The uncertainty principle in Fourier analysis is the informal assertion that a function $f \in \Ldeux(\R) $ and its Fourier transform $\mathcal{F}(f)$ cannot be too small simultaneously. Annihilating pairs and strong annihilating pairs are a way to formulate this idea (precise definitions will be given in Section 5). Characterizations of annihilating pairs and strong annihilating pairs $(S,\Sigma)$ in terms of the numerical range of the operator $P_S + \I P_\Sigma$, constructed using some associated orthogonal projections $P_S$ and $P_\Sigma$, can be found in \cite{Havin_Joricke, Lenard_1971}.
%and the numerical range $\NumRan{P_S + \I P_\Sigma} $. 

\smallskip

{\bf Main Results. } Our first contribution is an exact formula for the closure of the numerical range $ \adherence{\NumRan{P_{M_2}P_{M_1}}}$, expressed as a convex hull of some ellipses $\Ellipse{\lambda}$, parametrized by points in the spectrum ($\lambda \in \Spec{P_{M_2}P_{M_1}}$). 

\begin{definition}
  \label{DefEllipse}
  Let $\lambda \in [0,1]$. We denote $\Ellipse{\lambda}$ the domain delimited by the ellipse with foci $ 0$ and  $\lambda$, and minor axis length $\sqrt{\lambda(1-\lambda)} $.
\end{definition}

We refer to Remark \ref{RqEqEllipse} and to Figure 1 for more information about these ellipses.
%
% given in terms of $\Spec{P_{M_2}P_{M_1}}$. It is expressed in terms of a convex hull of some ellipses that we will define later in this paper (see Definition \ref{DefEllipse}).
\begin{theoreme}
  \label{CoroWP2P1}
  Let $M_1$ and $M_2$ be two closed subspaces of $H$ such that $M_1\ne H$ or $M_2\ne H$. Then the closure of the numerical range of $P_{M_2}P_{M_1} $ is the closure of the convex hull of the ellipses $\Ellipse{\lambda} $ for $\lambda \in \Spec{P_{M_2}P_{M_1}} $, i.e.:
  \[
  \adherence{\NumRan{P_{M_2}P_{M_1}}}= \adherence{\Con{}{\cup_{\lambda \in \Spec{P_{M_2}P_{M_1}}}\Ellipse{\lambda}}}.
  \]
\end{theoreme}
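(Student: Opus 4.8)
The plan is to reduce to a $2\times 2$ model via the two-subspaces decomposition, identify the numerical range of the model with the ellipse $\Ellipse{\lambda}$ through the elliptical range theorem, and then reassemble the full operator using the spectral theorem.

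First I would invoke Halmos' two-subspaces theorem to split $H$ orthogonally into the four ``trivial'' pieces $M_1\cap M_2$, $M_1\cap M_2^{\perp}$, $M_1^{\perp}\cap M_2$, $M_1^{\perp}\cap M_2^{\perp}$, together with a generic part $H_g$ unitarily equivalent to $K\oplus K$ on which $P_{M_1}=\begin{pmatrix} I & 0\\ 0 & 0\end{pmatrix}$ and $P_{M_2}=\begin{pmatrix} C^2 & CS\\ CS & S^2\end{pmatrix}$, where $C,S$ are commuting positive operators on $K$ with $C^2+S^2=I$ and $0<C^2<I$. Consequently $T$ restricted to $H_g$ is $\begin{pmatrix} C^2 & 0\\ CS & 0\end{pmatrix}$, while on the trivial pieces $T$ acts as $I$ (on $M_1\cap M_2$) or as $0$. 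I would also record here the role of the hypothesis $M_1\ne H$ or $M_2\ne H$: testing $T$ against a unit vector of $M_1^{\perp}$ (if $M_1\ne H$), or using $\ps{Tx}{x}=\ps{P_{M_1}x}{P_{M_2}x}=0$ for $x\in M_2^{\perp}$ (if $M_2\ne H$), shows that $0\in\NumRan{T}$, a fact needed to treat the spectral point $1$.

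Second, I would analyse the scalar model. For $C^2=\lambda\in[0,1]$ the generic block becomes $B_\lambda=\begin{pmatrix}\lambda & 0\\ \sqrt{\lambda(1-\lambda)} & 0\end{pmatrix}$, whose eigenvalues are $\lambda$ and $0$ and which satisfies $\mathrm{tr}(B_\lambda^{*}B_\lambda)=\lambda$. The elliptical range theorem for $2\times2$ matrices (see \cite{Gustafson_Rao}) then gives that $\NumRan{B_\lambda}$ is exactly the domain bounded by the ellipse with foci $0,\lambda$ and minor axis $\sqrt{\lambda(1-\lambda)}$, that is $\NumRan{B_\lambda}=\Ellipse{\lambda}$. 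This identity is precisely what singles out the ellipses of Definition \ref{DefEllipse}. Using the spectral theorem for the commuting pair $(C,S)$ I would then write $C^2=\int\lambda\,dE(\lambda)$ with $\mathrm{supp}\,E=\Spec{C^2|_K}$, exhibiting $T|_{H_g}$ as a direct integral of the blocks $B_\lambda$. For $\supseteq$: for each $\lambda_0$ in the support and each unit $w=(w_1,w_2)\in\C^2$ I would localise a unit vector $u$ in the range of $E([\lambda_0-\varepsilon,\lambda_0+\varepsilon])$ and test $T$ against $x=(w_1u,w_2u)$; since $\ps{C^2u}{u}\to\lambda_0$ and $\ps{CSu}{u}\to\sqrt{\lambda_0(1-\lambda_0)}$, this yields $\ps{Tx}{x}\to\ps{B_{\lambda_0}w}{w}$, so $\Ellipse{\lambda_0}=\NumRan{B_{\lambda_0}}\subseteq\adherence{\NumRan{T}}$. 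For $\subseteq$: writing a unit vector as a field $(x_\lambda)$ gives $\ps{Tx}{x}=\int\norme{x_\lambda}^2\frac{\ps{B_\lambda x_\lambda}{x_\lambda}}{\norme{x_\lambda}^2}\,d\mu(\lambda)$, an average of total mass one of points of $\bigcup_\lambda\NumRan{B_\lambda}$, hence a point of the closed convex hull.

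Finally I would combine the generic part with the trivial pieces via $\NumRan{A\oplus B}=\Con{}{\NumRan{A}\cup\NumRan{B}}$, noting that $\Ellipse{0}=\{0\}$, that $\Ellipse{1}=[0,1]$, and that $0$ is a focus, hence an element, of every $\Ellipse{\lambda}$; together with $0\in\NumRan{T}$ and convexity this absorbs the eigenvalue $1$, and the index set is reconciled with $\Spec{T}$ because $\Spec{T}\cap(0,1)=\Spec{C^2|_K}\cap(0,1)$ while the boundary values $0,1$ are already accounted for. The main obstacle I expect is the direct-integral step: making rigorous the measurable field of $2\times2$ operators, the identification of $\mathrm{supp}\,E$ with $\Spec{C^2|_K}$, and the averaging argument yielding $\adherence{\NumRan{T|_{H_g}}}=\adherence{\Con{}{\cup_{\lambda}\NumRan{B_\lambda}}}$; the matching of the degenerate boundary ellipses at $0$ and $1$ with the contributions of the trivial pieces also requires care, and is exactly where the standing hypothesis $M_1\ne H$ or $M_2\ne H$ enters.
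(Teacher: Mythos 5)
Your proposal is correct, but it follows a genuinely different route from the paper's proof. The paper reduces to generic position via the decomposition (\ref{EqDecompoPosGen}) and handles the degenerate pieces exactly as you do (using $\Ellipse{0}=\lbrace 0\rbrace$, $\Ellipse{1}=[0,1]$, the fact that $0\in\Ellipse{\lambda}$ for every $\lambda$, and the hypothesis $M_1\ne H$ or $M_2\ne H$), so the difference lies entirely in the generic part. There the paper never invokes the elliptical range theorem: for each angle $\alpha$ it computes the support function $\FctionSupp{\NumRan{P_2P_1}}{\alpha}$ by writing $\Reel{\exp(-\I\alpha)P_2P_1}$ as a $2\times2$ operator matrix in Halmos' angle operator $T$, diagonalizing it explicitly via a Borel functional-calculus unitary $U(T,\alpha)$ to get $\FctionSupp{\NumRan{P_2P_1}}{\alpha}=\sup_{\lambda\in\Spec{P_2P_1}}\frac{1}{2}\left(\cos(\alpha)\lambda+\sqrt{\lambda(1-\sin(\alpha)^2\lambda)}\right)$, matching this against the separately computed support function of $\Ellipse{\lambda}$, and concluding by Lemma \ref{LemFctionSuppFusion2Ens}, since support functions determine closures of convex sets. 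Your route --- realize $T|_{H_g}$ as a field of blocks $B_\lambda$, identify $\NumRan{B_\lambda}=\Ellipse{\lambda}$ by the elliptical range theorem, prove $\supseteq$ by spectral localization and $\subseteq$ by a barycenter argument --- is in effect the continuous-spectrum extension of the argument the paper itself uses only in the diagonalizable case (Corollary \ref{CoroWP2P1DiagPosGen}, where the convex combination $\ps{P_2P_1x}{x}=\sum_n\norme{x_n}^2\,\ps{P_2P_1x_n}{x_n}/\norme{x_n}^2$ plays the role of your integral). What each buys: the paper's computation needs no measure-theoretic machinery, assumes no separability, and produces an explicit support-function formula that is reused later (Proposition \ref{PropLinkRaySpecNumRan}, Theorem \ref{ThLienSpecWP2P1}); your version is more geometric and explains directly where the ellipses come from.

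Two points of care. First, direct integrals as usually set up presuppose separability, which Theorem \ref{CoroWP2P1} does not assume (the paper restricts to separable $H$ only in Theorem \ref{ThWP2P1Diag}); you can avoid them altogether by decomposing $K$ into an orthogonal family of cyclic subspaces for $C^2$, so that $C^2$ becomes multiplication by $\lambda$ on each $L^2(\mu_i)$ with $\mathrm{supp}\,\mu_i\subseteq\Spec{C^2}$, while $CS=\left(C^2(I-C^2)\right)^{1/2}$ becomes multiplication by $\sqrt{\lambda(1-\lambda)}$; both of your inclusions then go through verbatim, the barycenter step being justified by separating half-planes. Second, record explicitly that $\Spec{T|_{H_g}}=\Spec{C^2}\cup\lbrace 0\rbrace$, so that the only discrepancies between your index set and $\Spec{P_{M_2}P_{M_1}}$ occur at $\lambda\in\lbrace 0,1\rbrace$ --- which is exactly what your absorption argument, using $0\in\NumRan{P_{M_2}P_{M_1}}$ (this is where the standing hypothesis enters, as you correctly noted), already covers.
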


  The proof uses in an essential way Halmos' two subspaces theorem recalled in the next section. We will use a completely different approach to describe the numerical range (without the closure) of $T=P_{M_2}P_{M_1}$ under the additional assumption that the self-adjoint operator $T^{\ast}T=P_{M_1}P_{M_2}P_{M_1}$ is diagonalisable (see Definition \ref{DefDiag}). In this case the numerical range $\NumRan{T}$ is the convex hull of the same ellipses as before but this time parametrized by the point spectrum $\Specp{T}$ (=eigenvalues) of $T=P_{M_2}P_{M_1}$.

\begin{theoreme}
  \label{ThWP2P1Diag}
 Let $H$ be a separable Hilbert space. Let $M_1$ and $M_2$ be two closed subspaces of a Hilbert space $H$ such that $M_1 \ne H$ or $M_2 \ne H$.
  If $P_{M_1}P_{M_2}P_{M_1}$ is diagonalizable, then 
  the numerical range $\NumRan{P_{M_2}P_{M_1}}$ is the convex hull of the ellipses $\Ellipse{\lambda}$, with the $\lambda$'s being the eigenvalues of $P_{M_2}P_{M_1}$, i.e.:
  \[
  \NumRan{P_{M_2}P_{M_1}}
  = \Con{}{\cup_{\lambda \in \Specp{P_{M_2}P_{M_1}}}\Ellipse{\lambda}}.
  \] 
\end{theoreme}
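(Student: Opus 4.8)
The plan is to use the diagonalizability hypothesis to split $T = P_{M_2}P_{M_1}$ into an orthogonal direct sum of one- and two-dimensional blocks, for which the numerical ranges are exactly the ellipses $\Ellipse{\lambda}$, and then to control the infinite convex combinations that arise so as to remove the closure. First I would invoke Halmos' two subspaces theorem to reduce $(P_{M_1}, P_{M_2})$ to its generic form, in which $H$ decomposes into the four trivial intersections $M_1\cap M_2$, $M_1\cap M_2^\perp$, $M_1^\perp\cap M_2$, $M_1^\perp\cap M_2^\perp$ together with a generic part $K\oplus K$ on which $P_{M_1} = \left(\begin{smallmatrix} I & 0\\ 0 & 0\end{smallmatrix}\right)$ and $P_{M_2} = \left(\begin{smallmatrix} C^2 & CS\\ CS & S^2\end{smallmatrix}\right)$ with $C,S$ positive, $C^2+S^2 = I$, and $\sigma(C^2)\subseteq(0,1)$. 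A direct computation gives $T^\ast T = P_{M_1}P_{M_2}P_{M_1} = C^2\oplus 0$ on the generic part, so the hypothesis that $P_{M_1}P_{M_2}P_{M_1}$ is diagonalizable is exactly the statement that $C^2$ admits an orthonormal eigenbasis $(e_k)$ with $C^2 e_k = \lambda_k e_k$, $\lambda_k\in(0,1)$. In this basis $C$, $S$ and hence $T$ are all block-diagonal, and $T$ becomes the orthogonal sum of the scalar $1$ (on $M_1\cap M_2$), the scalar $0$ (on the three remaining trivial pieces) and the $2\times2$ blocks $T_k = \left(\begin{smallmatrix}\lambda_k & 0\\ \sqrt{\lambda_k(1-\lambda_k)} & 0\end{smallmatrix}\right)$.

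The point of this reduction is that the eigenvalues of $T_k$ are $0$ and $\lambda_k$, while $\mathrm{tr}(T_k^\ast T_k) = \lambda_k$; by the elliptical range theorem the numerical range $\NumRan{T_k}$ is the ellipse with foci $0,\lambda_k$ and minor axis $\sqrt{\lambda_k(1-\lambda_k)}$, that is $\NumRan{T_k} = \Ellipse{\lambda_k}$. Moreover $\Specp{T}$ consists of the $\lambda_k$ together with possibly $0$ and $1$, and $\Spec{T} = \adherence{\Specp{T}}$, so Theorem \ref{CoroWP2P1} together with the continuity of $\lambda\mapsto\Ellipse{\lambda}$ gives $\adherence{\NumRan{T}} = \adherence{\Con{}{\cup_{\lambda\in\Specp{T}}\Ellipse{\lambda}}}$. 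The inclusion $\supseteq$ is then immediate: writing $K_0 = \Con{}{\cup_{\lambda\in\Specp{T}}\Ellipse{\lambda}}$, each nondegenerate $\Ellipse{\lambda_k} = \NumRan{T_k}$ is the numerical range of a restriction of $T$ to a reducing subspace, hence lies in $\NumRan{T}$, while the degenerate cases $\Ellipse{0} = \{0\}$ and $\Ellipse{1} = [0,1]$ lie in $\NumRan{T}$ since $0,1\in\NumRan{T}$ (using $M_1\ne H$ or $M_2\ne H$) and $\NumRan{T}$ is convex; as $\NumRan{T}$ is convex, $K_0\subseteq\NumRan{T}$.

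The substance of the proof is the reverse inclusion $\NumRan{T}\subseteq K_0$, where the closure must be removed. Given a unit vector $x = \oplus_\alpha x_\alpha$, expand $\ps{Tx}{x} = \sum_\alpha\norme{x_\alpha}^2 w_\alpha$ with $w_\alpha\in\NumRan{T_\alpha}$ and weights $\norme{x_\alpha}^2$ summing to $1$, so $z := \ps{Tx}{x}$ is an \emph{infinite} convex combination of points of $\cup_\alpha\Ellipse{\lambda_\alpha}$. If $z$ is an interior point of $\adherence{\NumRan{T}}$, I would conclude using the fact that for a convex set $K_0$ with $\adherence{K_0} = \adherence{\NumRan{T}}$ one has $\Interieur{\adherence{\NumRan{T}}} = \Interieur{K_0}\subseteq K_0$ (the interior being nonempty as soon as some $\lambda_k\in(0,1)$ exists; the degenerate case where all eigenvalues lie in $\{0,1\}$ reduces $T$ to a projection and is checked directly). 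If instead $z\in\partial\adherence{\NumRan{T}}$, take a supporting line $\ell = \{w : \Reel{\bar u w} = \Reel{\bar u z}\}$ at $z$. Since every $w_\alpha$ belongs to $\NumRan{T}$, the maximality of $\Reel{\bar u z}$ forces $\Reel{\bar u w_\alpha} = \Reel{\bar u z}$ for every $\alpha$ with $x_\alpha\ne 0$, so all participating $w_\alpha$ lie on $\ell$. As $\ell\cap\adherence{\NumRan{T}}$ is either a point or a segment, a one-dimensional averaging argument along $\ell$ produces two indices $\alpha_1,\alpha_2$ whose coordinates on $\ell$ bracket that of $z$, whence $z\in\Con{}{\Ellipse{\lambda_{\alpha_1}}\cup\Ellipse{\lambda_{\alpha_2}}}\subseteq K_0$.

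The main obstacle is precisely this last step. When the eigenvalues $\lambda_k$ accumulate at a point $\lambda_\infty\notin\Specp{T}$, the limit ellipse $\Ellipse{\lambda_\infty}$ contributes to $\adherence{\NumRan{T}}$ but not to $K_0$, so one cannot simply quote that the numerical range of an infinite direct sum equals the \emph{closed} convex hull of the summands. The quantitative point that rescues the argument is that an honest $\ell_2$-convex combination $\sum_\alpha t_\alpha s_\alpha$ of real numbers $s_\alpha$ all strictly below a value $s_\infty$ stays strictly below $s_\infty$; this prevents the boundary value $z$ from being a non-attained accumulation point and guarantees that the bracketing indices $\alpha_1,\alpha_2$ are genuine (so $\lambda_{\alpha_1},\lambda_{\alpha_2}\in\Specp{T}$), which is exactly what allows the closure in Theorem \ref{CoroWP2P1} to be dropped.
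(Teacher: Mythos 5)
Your proof is correct, and its skeleton --- orthogonal splitting of $T=P_{M_2}P_{M_1}$ into the trivial pieces plus $2\times 2$ blocks $\left(\begin{smallmatrix}\lambda_k & 0\\ \sqrt{\lambda_k(1-\lambda_k)} & 0\end{smallmatrix}\right)$, the elliptical range theorem giving $\NumRan{T_k}=\Ellipse{\lambda_k}$, and bookkeeping of the countable convex combinations $\ps{Tx}{x}=\sum_\alpha \norme{x_\alpha}^2 w_\alpha$ --- is the same as the paper's. You differ in the two supporting steps. First, you get the block decomposition by re-invoking Halmos' theorem and diagonalizing $C^2$ (with the needed remark that eigenvectors of $C^2\oplus 0$ split because $C$ is injective), whereas the paper constructs the decomposition bare-handed from the eigenvectors: it sets $w_n=P_2h_n$, $f_n=(I-P_1)P_2h_n$ and proves that $(\tilde{w}_n)$ and $(\tilde{f}_n)$ are orthonormal bases of $N_2$ and $N_1^\perp$. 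Second --- and this is the crux of removing the closure --- the paper defines $\Con{}{E}$ as the set of \emph{countable} convex combinations and cites \cite{Takemoto_Uchiyama_Zsido_2003} for the fact that this coincides with the classical convex hull for bounded sets, so both inclusions become nearly immediate. Your interior/boundary dichotomy (interior points via $\Interieur{\adherence{K_0}}=\Interieur{K_0}\subseteq K_0$; boundary points by forcing all participating $w_\alpha$ onto the supporting line, then bracketing the coordinate of $z$ by two participating indices, with the strict-inequality observation ruling out non-attained accumulation values such as $\lambda_\infty=1$) is exactly a self-contained proof of that cited fact for bounded convex subsets of $\C$. So your route buys independence from the reference at the cost of a case analysis; the paper buys brevity by outsourcing it.

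Three small slips, none load-bearing. The claim $\sigma(C^2)\subseteq(0,1)$ is false in general: generic position excludes $0$ and $1$ only as \emph{eigenvalues} of $C^2$ (Example \ref{RqExNonDiag} has $\Spec{C^2}=[0,1]$); you only use $\lambda_k\in(0,1)$, so this does not propagate. The assertion ``$0,1\in\NumRan{T}$'' should be conditioned: $1\in\NumRan{T}$ holds exactly when $M_1\cap M_2\ne\lbrace 0\rbrace$, i.e.\ when $1\in\Specp{T}$ and $\Ellipse{1}$ actually occurs in $K_0$, while $0\in\Specp{T}$ is automatic from $M_1\ne H$ or $M_2\ne H$. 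Finally, the identity $\Spec{T}=\adherence{\Specp{T}}$, which you use to pass through Theorem \ref{CoroWP2P1}, deserves a one-line justification: off $\adherence{\lbrace 0,1\rbrace\cup\lbrace\lambda_k\rbrace}$ the resolvents of the $2\times 2$ blocks are uniformly bounded, so the spectrum of the direct sum is the closure of the union of the block spectra.
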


Concerning the relationship between the numerical radius and the spectral radius of a product of two orthogonal projections we prove the following result.
\begin{proposition}
  \label{PropLinkRaySpecNumRan}
  Let $M_1, M_2$ be two closed subspaces of $H$. The numerical radius and the spectral radius of $P_{M_2}P_{M_1}$ are linked by the following formula:
  \[
  \NumRay{P_{M_2}P_{M_1}}= \frac{1}{2}\left( \sqrt{\RaySpec{P_{M_2}P_{M_1}}} + \RaySpec{P_{M_2}P_{M_1}} \right) .
  \]
\end{proposition}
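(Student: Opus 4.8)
Write $T=P_{M_2}P_{M_1}$ and $r=\RaySpec{T}$. The plan is to read off the numerical radius directly from the geometric description of $\adherence{\NumRan{T}}$ given by Theorem \ref{CoroWP2P1}. First I would observe that $\NumRay{T}$ depends only on the closure of the numerical range, because $z\mapsto\abs{z}$ is continuous, so $\NumRay{T}=\sup\{\abs{z}:z\in\adherence{\NumRan{T}}\}$. The trivial case $M_1=M_2=H$ gives $T=I$, $\Spec{T}=\{1\}$, and both sides of the asserted identity equal $1$; so I may assume $M_1\ne H$ or $M_2\ne H$ and invoke Theorem \ref{CoroWP2P1}. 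I would also record that $\Spec{T}\subseteq[0,1]$: indeed $\Spec{T}\setminus\{0\}=\Spec{T^{\ast}T}\setminus\{0\}$ (the nonzero parts of the spectra of $P_{M_1}T$ and $TP_{M_1}=T$ coincide), and $T^{\ast}T=P_{M_1}P_{M_2}P_{M_1}$ is a positive contraction, so $r=\max\Spec{T}\in[0,1]$.

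Next I would reduce the supremum over the closed convex hull to one over the individual ellipses. Since $\abs{\cdot}$ is convex and continuous, its supremum over $\adherence{\Con{}{\cup_{\lambda\in\Spec{T}}\Ellipse{\lambda}}}$ equals its supremum over $\cup_{\lambda\in\Spec{T}}\Ellipse{\lambda}$: forming the convex hull cannot increase a convex function (any convex combination $w=\sum_i t_i z_i$ satisfies $\abs{w}\le\max_i\abs{z_i}$), and passing to the closure leaves the supremum of a continuous function unchanged. This gives
\[
\NumRay{T}=\sup_{\lambda\in\Spec{T}}\;\max_{z\in\Ellipse{\lambda}}\abs{z}.
\]

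The key step is the single-ellipse computation $\max_{z\in\Ellipse{\lambda}}\abs{z}$. By Definition \ref{DefEllipse} the foci of $\Ellipse{\lambda}$ are $0$ and $\lambda$, so its center is $\lambda/2$ and its focal half-distance is $c=\lambda/2$, while its semi-minor axis is $b=\tfrac12\sqrt{\lambda(1-\lambda)}$; hence the semi-major axis obeys $a^2=b^2+c^2=\tfrac14\lambda$, i.e. $a=\tfrac12\sqrt{\lambda}$. As $0$ is a focus and the major axis lies along the real line, the maximum of the convex function $\abs{\cdot}$ over the filled ellipse is attained on the boundary, and there the farthest point from a focus is the apoapsis, at distance $c+a=\tfrac12(\lambda+\sqrt{\lambda})$ from $0$. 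Therefore $\max_{z\in\Ellipse{\lambda}}\abs{z}=\tfrac12(\sqrt{\lambda}+\lambda)$.

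Finally I would assemble the pieces. The map $g(\lambda)=\tfrac12(\sqrt{\lambda}+\lambda)$ is strictly increasing on $[0,1]$, so its supremum over the compact set $\Spec{T}\subseteq[0,1]$ is attained at $\lambda=\max\Spec{T}=r$, giving $\NumRay{T}=g(r)=\tfrac12(\sqrt{\RaySpec{T}}+\RaySpec{T})$. I expect the only genuinely delicate points to be the geometric identification of the farthest point of $\Ellipse{\lambda}$ from the origin, which is clean once the semi-axes are in hand, and the reduction of the maximization over the closed convex hull to the ellipses themselves; the remaining ingredients (monotonicity of $g$ and the inclusion $\Spec{T}\subseteq[0,1]$) are routine.
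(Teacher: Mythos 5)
Your proposal is correct and follows essentially the same route as the paper: invoke Theorem \ref{CoroWP2P1}, compute $\sup_{z \in \Ellipse{\lambda}} \abs{z} = \frac{1}{2}(\sqrt{\lambda}+\lambda)$ for each ellipse, and conclude by monotonicity of $\lambda \mapsto \frac{1}{2}(\sqrt{\lambda}+\lambda)$ on $\Spec{P_{M_2}P_{M_1}} \subseteq [0,1]$. The only cosmetic difference is that you find the maximum modulus geometrically as the focus-to-apoapsis distance $a+c$ with $a=\frac{\sqrt{\lambda}}{2}$, $c=\frac{\lambda}{2}$, whereas the paper maximizes $x_\lambda(t)^2+y_\lambda(t)^2$ from the parametrization of Remark \ref{RqEqEllipse}; you also make explicit the convexity argument reducing the supremum over the closed convex hull to the ellipses themselves, which the paper leaves implicit.
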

  
  The proof is an application of Theorem \ref{CoroWP2P1} and the obtained formula is better than 
  Kittaneh's inequality \cite{Kittaneh_2003} whenever the Friedrichs angle (Definition \ref{DefCosineFriederich}) between $M_1$ and $M_2$ is positive. 
  
Theorems  \ref{CoroWP2P1} and \ref{ThWP2P1Diag} can be used to localize $\NumRan{P_{M_2}P_{M_1}}$ even if the spectrum of 
  $P_{M_2}P_{M_1}$ is unknown. We mention here the following important consequence about the inclusion of $\NumRan{P_{M_2}P_{M_1}}$ in a sector of vertex $1$ whose angular opening is expressed in terms of the cosine of the Friedrichs angle $\cos(M_1,M_2)$ between the subspaces $M_1$ and $M_2$. This is a refinement of the Crouzeix's result \cite{Crouzeix_2008} for products of two orthogonal projections.  
  
  \begin{proposition}
  \label{LemWP2P1dsSecteur}
    Let $M_1$ and $M_2$ be two closed subspaces of a Hilbert space $H$. We have the following inclusion:
  \[
  \NumRan{P_{M_2}P_{M_1}} \subset \left\lbrace z \in \C, \abs{\arg(1-z)} \le \arctan(\sqrt{\frac{\cos^2(M_1,M_2)}{4-\cos^2(M_1,M_2)}}) \right\rbrace.
  \]
\end{proposition}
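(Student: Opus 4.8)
The plan is to combine Theorem~\ref{CoroWP2P1} with an elementary computation for a single ellipse. Set $T=P_{M_2}P_{M_1}$, write $c=\cos(M_1,M_2)$, and let
\[
S=\left\{ z\in\C : \abs{\arg(1-z)}\le \arctan\sqrt{\frac{c^2}{4-c^2}} \right\}
\]
be the sector in the statement. Since $c^2\le 1$ gives $\frac{c^2}{4-c^2}\le\frac13$, its half-opening is at most $\arctan\sqrt{1/3}=\pi/6<\pi/2$, so $S$ is a closed convex set. By Theorem~\ref{CoroWP2P1}, $\adherence{\NumRan{T}}$ is the closed convex hull of the ellipses $\Ellipse{\lambda}$, $\lambda\in\Spec{T}$; as $S$ is closed and convex it therefore suffices to prove $\Ellipse{\lambda}\subseteq S$ for every $\lambda\in\Spec{T}$, which then yields $\NumRan{T}\subseteq\adherence{\NumRan{T}}\subseteq S$.

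First I would treat a single ellipse. For $\lambda\in[0,1)$ the domain $\Ellipse{\lambda}$ is the solid ellipse centred at $\lambda/2$ with semi-axes $a=\tfrac12\sqrt{\lambda}$ (horizontal) and $b=\tfrac12\sqrt{\lambda(1-\lambda)}$ (vertical); in particular its rightmost point $\tfrac12(\lambda+\sqrt{\lambda})$ is $<1$, so the vertex $1$ lies strictly outside it and every $z\in\Ellipse{\lambda}$ has $\Reel{z}<1$, whence $\abs{\arg(1-z)}=\arctan\frac{\abs{\Ima{z}}}{1-\Reel{z}}$. The largest value of this ratio over $\Ellipse{\lambda}$ is attained along the two tangent lines drawn from $1$; imposing that a line $y=m(1-x)$ meet the ellipse in a double point (vanishing discriminant) gives, after simplification, $m^2=\lambda/(4-\lambda)$. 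Hence $\Ellipse{\lambda}$ is contained in the sector of vertex $1$ and half-opening $\theta(\lambda):=\arctan\sqrt{\lambda/(4-\lambda)}$.

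It remains to maximise over the spectrum. The map $\lambda\mapsto\lambda/(4-\lambda)$ has positive derivative on $[0,1]$, so $\theta$ is increasing; and, as a standard consequence of Halmos' two subspaces theorem (equivalently, of the definition of the Friedrichs angle), one has the spectral identity $c^2=\cos^2(M_1,M_2)=\sup\bigl(\Spec{T}\setminus\{1\}\bigr)$. Consequently every $\lambda\in\Spec{T}$ with $\lambda\neq1$ satisfies $\lambda\le c^2$, hence $\theta(\lambda)\le\theta(c^2)$ and $\Ellipse{\lambda}\subseteq S$. The value $\lambda=1$ is handled separately: there $\Ellipse{1}$ degenerates to the segment $[0,1]$, which lies on the real axis and so is trivially contained in $S$. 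This establishes $\Ellipse{\lambda}\subseteq S$ for all $\lambda\in\Spec{T}$ and completes the argument.

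The main obstacle is exactly this degeneracy at $\lambda=1$: the tangent-cone formula would return $\theta(1)=\pi/6$, which is \emph{not} the true enclosing half-angle of the segment $[0,1]$ (that angle is $0$). Thus $\lambda=1$ cannot be inserted into the monotonicity argument and must be excluded through the spectral identity $c^2=\sup(\Spec{T}\setminus\{1\})$; pinning down that identity, and thereby the precise role of the Friedrichs angle, is the one genuinely non-routine ingredient. The ellipse tangent computation, while it must be carried out carefully, collapses to the clean expression $m^2=\lambda/(4-\lambda)$.
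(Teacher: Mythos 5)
Your proposal is correct and takes essentially the same approach as the paper: you reduce via Theorem~\ref{CoroWP2P1} to showing each $\Ellipse{\lambda}\subset S$, obtain the per-ellipse half-angle $\arctan\sqrt{\lambda/(4-\lambda)}$, handle the degenerate case $\Ellipse{1}=[0,1]$ separately exactly as the paper does, and conclude with the spectral identity $\cos(M_1,M_2)=\sup\lbrace\sqrt{\lambda}:\lambda\in\Spec{P_{M_2}P_{M_1}}\setminus\lbrace 1\rbrace\rbrace$, which is the paper's Lemma~\ref{LCosSp}. The only cosmetic difference is that you find the extremal slope by a vanishing-discriminant tangent-line computation, while the paper maximizes $\tan\theta_\lambda(t)=y_\lambda(t)/(1-x_\lambda(t))$ over the boundary parametrization; both yield $\tan^2\theta_\lambda=\lambda/(4-\lambda)$.
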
 
  
We next consider some inverse spectral problems and construct examples of projections such that the spectrum of their product is a prescribed compact set included in $[0,1]$. These examples will generalize to the infinite dimensional setting a result due to Nelson and Neumann \cite{Nelson_Neumann_1987}. We will also give examples that answer two open questions stated in a article of Nees \cite{Nees_1999}. 
 
  The following result allows to find $\Spec{P_{M_2}P_{M_1}}\cap[\frac{1}{4},1]$, the points of the spectrum which are larger than $1/4$, whenever the closure $\adherence{\NumRan{P_{M_2}P_{M_1}}}$ of the numerical range is known.
\begin{theoreme}
  \label{ThLienSpecWP2P1}
  Let $\alpha \in [\frac{\pi}{3},\pi]$. The following assertions are equivalent:
  \begin{enumerate}
  \item $\frac{1}{2(1-\cos(\alpha))} \in \Spec{P_{M_2}P_{M_1}} $;
  \item $\sup \lbrace \Reel{z \exp(-\I\alpha)}, z \in \NumRan{P_{M_2}P_{M_1}} \rbrace = \frac{1}{4(1-\cos(\alpha))}$.
  \end{enumerate}
\end{theoreme}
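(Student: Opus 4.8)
The plan is to read assertion (2) as the value of the support function of $\NumRan{P_{M_2}P_{M_1}}$ in the direction $\exp(\I\alpha)$ and to evaluate it through Theorem \ref{CoroWP2P1}. Writing $S(\alpha)=\sup\lbrace \Reel{z\exp(-\I\alpha)} : z \in \NumRan{P_{M_2}P_{M_1}}\rbrace$, I would first use that a set, its closure, and its closed convex hull all share the same support function, so that Theorem \ref{CoroWP2P1} gives $S(\alpha)=\sup_{\lambda\in\Spec{P_{M_2}P_{M_1}}} h_\lambda(\alpha)$, where $h_\lambda(\alpha)=\sup\lbrace \Reel{z\exp(-\I\alpha)} : z\in\Ellipse{\lambda}\rbrace$ is the support function of a single ellipse. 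The problem thus splits into two tasks: computing $h_\lambda(\alpha)$ explicitly, and optimizing it over $\lambda$.

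For the first task I would parametrize $\Ellipse{\lambda}$. From Definition \ref{DefEllipse} its centre is $\lambda/2$ and its semi-axes are $\sqrt\lambda/2$ (in the real direction) and $\sqrt{\lambda(1-\lambda)}/2$, so a boundary point is $z(t)=\frac{\lambda}{2}+\frac{\sqrt\lambda}{2}\cos t+\I\frac{\sqrt{\lambda(1-\lambda)}}{2}\sin t$. Then $\Reel{z(t)\exp(-\I\alpha)}=\frac{\lambda}{2}\cos\alpha+\frac{\sqrt\lambda}{2}\cos\alpha\cos t+\frac{\sqrt{\lambda(1-\lambda)}}{2}\sin\alpha\sin t$, and optimizing $A\cos t+B\sin t$ over $t$ (maximum $\sqrt{A^2+B^2}$) yields the closed formula $h_\lambda(\alpha)=\frac{\lambda}{2}\cos\alpha+\frac{\sqrt\lambda}{2}\sqrt{1-\lambda\sin^2\alpha}$.

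The heart of the proof, and the step I expect to require the most care, is the optimization over $\lambda\in[0,1]$ (recall $\Spec{P_{M_2}P_{M_1}}\subseteq[0,1]$). I claim that for $\alpha\in[\frac\pi3,\pi]$ one has $h_\lambda(\alpha)\le \frac{1}{4(1-\cos\alpha)}$, with equality exactly at $\lambda_0=\frac{1}{2(1-\cos\alpha)}$. Rather than differentiate, I would argue algebraically: both sides are nonnegative (a short check shows $\frac{1}{4(1-\cos\alpha)}-\frac{\lambda}{2}\cos\alpha\ge 0$ on $[0,1]$, using $\cos\alpha\le\frac12$), so one may isolate the square root and square. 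After clearing denominators the difference of the squared sides collapses, writing $c=\cos\alpha$, to the perfect square $\big(1-2(1-c)\lambda\big)^2\ge 0$. This is the whole point: the inequality holds on $[0,1]$ and becomes an equality precisely when $1-2(1-c)\lambda=0$, i.e. $\lambda=\lambda_0$. The hypothesis $\alpha\in[\frac\pi3,\pi]$ enters exactly here, guaranteeing $\lambda_0\in[\frac14,1]\subseteq[0,1]$ so that $\lambda_0$ is an admissible spectral value; a direct substitution then confirms $h_{\lambda_0}(\alpha)=\frac{1}{4(1-\cos\alpha)}$.

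Finally I would assemble the equivalence using compactness of $\Spec{P_{M_2}P_{M_1}}$. Since $\lambda\mapsto h_\lambda(\alpha)$ is continuous on the compact spectrum, the supremum $S(\alpha)$ is attained at some $\lambda^\ast\in\Spec{P_{M_2}P_{M_1}}$, and by the previous step $S(\alpha)\le \frac{1}{4(1-\cos\alpha)}$ always holds. If (1) holds, then $\lambda_0$ is available and $S(\alpha)\ge h_{\lambda_0}(\alpha)=\frac{1}{4(1-\cos\alpha)}$, forcing equality, which is (2). Conversely, if (2) holds, then $h_{\lambda^\ast}(\alpha)=\frac{1}{4(1-\cos\alpha)}$; the strictness of the maximum furnished by the perfect-square identity forces $\lambda^\ast=\lambda_0$, hence $\lambda_0\in\Spec{P_{M_2}P_{M_1}}$, which is (1). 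This closes the loop.
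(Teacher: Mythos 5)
Your proof is correct, and its overall skeleton is the paper's: read assertion (2) as the support function $\FctionSupp{\NumRan{P_{M_2}P_{M_1}}}{\alpha}$, reduce via Theorem \ref{CoroWP2P1} to $\sup_{\lambda\in\Spec{P_{M_2}P_{M_1}}} g_\alpha(\lambda)$ with $g_\alpha(\lambda)=\frac12\bigl(\lambda\cos\alpha+\sqrt{\lambda(1-\lambda\sin^2\alpha)}\bigr)$ (your $h_\lambda(\alpha)$ agrees with the paper's support function of $\Ellipse{\lambda}$), then attain the supremum by compactness of the spectrum and continuity of $g_\alpha$. Where you genuinely diverge is the maximization over $[0,1]$: the paper runs a calculus argument, characterizing critical points via the equation $\alpha=2\arcsin(\sqrt{1-\lambda\sin^2\alpha})$ (Lemma \ref{LemCNSptcritiq}), showing $\lambda_\alpha=\frac{1}{2(1-\cos\alpha)}$ is the unique interior critical point for $\alpha\in[\frac{\pi}{3},\pi]$ (Proposition \ref{PropPtCritiq}), and comparing separately with the endpoint values $g_\alpha(0)=0$ and $g_\alpha(1)=\cos\alpha$ in the corollary computing $\FctionSupp{\mathscr{S}}{\alpha}$. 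You replace all of this by one algebraic identity: writing $c=\cos\alpha$, after the (legitimate) nonnegativity check that allows squaring,
\[
16(1-c)^2\Bigl(\bigl(\tfrac{1}{4(1-c)}-\tfrac{\lambda c}{2}\bigr)^2-\tfrac{\lambda}{4}\bigl(1-\lambda(1-c^2)\bigr)\Bigr)=\bigl(1-2(1-c)\lambda\bigr)^2 ,
\]
which I have verified and which yields simultaneously the bound $g_\alpha(\lambda)\le\frac{1}{4(1-c)}$ on $[0,1]$ and the equality case $\lambda=\lambda_\alpha$ exactly. What each approach buys: your perfect square is shorter and makes fully airtight the uniqueness of the maximizer, a point the paper states somewhat tersely (its claim that $g_\alpha(\lambda_0)=\frac{1}{4(1-\cos\alpha)}$ iff $\lambda_0=\lambda_\alpha$ strictly requires combining the critical-point analysis with the endpoint comparison, where the paper's own $(1-2\cos\alpha)^2$ appears as a shadow of your identity); the paper's calculus route, in exchange, produces reusable by-products, notably Corollary \ref{CoroPasPtCritiq} (no interior critical point for $\alpha\in[0,\frac{\pi}{3}]$), which is invoked later for the annihilating-pair characterizations. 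One small point that you and the paper both gloss: Theorem \ref{CoroWP2P1} assumes $M_1\ne H$ or $M_2\ne H$, so the case $P_{M_2}P_{M_1}=I$ needs a direct check; it is immediate, since then $\NumRan{I}=\lbrace 1\rbrace$ and both assertions hold precisely when $\alpha=\frac{\pi}{3}$.
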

Actually it is possible to obtain a description of the entire spectrum $\Spec{P_{M_2}P_{M_1}}$ starting from $\adherence{\NumRan{P_{M_2}P_{M_1}}}$ and $\adherence{\NumRan{P_{M_2}(I-P_{M_1})}}$.

  Finally, we will explain how the relation $ 1 \in \NumRan{P_{M_2}P_{M_1}}$ is related to arbitrarily slow convergence in the von Neumann-Halperin theorem and 
 we will give new characterizations of annihilating pairs and strong annihilating pairs in terms of $\NumRan{P_SP_\Sigma} $. 

\smallskip

{\bf Organization of the paper.} The rest of the paper is organized as follows. We recall in Section 2 several preliminary notions and known facts that will be useful in the sequel. In Section 3 we discuss the results concerning the exact computation of the numerical range of a product $T$ of two orthogonal projections assuming that the spectrum, or the point spectrum, of $T$ is known. Then we will give some ``localization'' results about the numerical range of $T$ that require less informations about the spectrum of $T$.  Several examples are also given, some of them leading to an answer of two open questions from \cite{Nees_1999}. In Section 4 we discuss the inverse problem of describing the spectrum of $T$ knowing its numerical range, and the relationship between the numerical and spectral radii of $T$. The paper ends with two applications of these results, one concerning the rate of convergence in the method of alternating projections and the second one concerning the uncertainty principle.

\section{Preliminaries}
In this section we introduce some notations and recall several useful facts and results. 

\begin{definition}
  Let $E$ be a bounded subset of the complex plane $\C$. We denote by $\Con{}{E}$ the convex hull of $E$, which is the set of all convex combinations of the points in $E$, i.e.
  $$
  \Con{}{E} = \lbrace \sum_{n \in \N} x_n \varepsilon_n, \varepsilon_n \in E, x_n \in [0,1], \sum_{n \in \N} x_n = 1 \rbrace.
  $$
\end{definition}

We refer the reader to \cite{Takemoto_Uchiyama_Zsido_2003} for a proof that this definition coincides with the classical one (the smallest convex subset which contains $E$). We will also denote by $\adherence{\Con{}{E}}$ the closure of the convex hull of E. 

%Sometime the convex hull of a bounded subset $E$ of the complex plane is defined as the smallest convex set which contains $E$. This is a folklore result that those two definitions are equivalent. We refer the reader to \cite{Takemoto_Uchiyama_Zsido_2003} for more details.

\subsection{Halmos' two subspaces theorem}
 For a fixed Hilbert space $H$ and a closed subspace $M$ of $H$ we denote by $M^\perp$ the orthogonal complement of $M$ in $H$ and by $P_M$ the orthogonal projection onto $M$.  Let now $M_1$ and $M_2 $ be two closed subspaces of a Hilbert space $H$. Consider the following orthogonal decomposition:
\begin{equation}
  H= (M_1 \cap M_2) \oplus (M_1 \cap M_2^\perp) \oplus (M_1^\perp \cap M_2) \oplus (M_1^\perp \cap M_2^\perp) \oplus \tilde{H},
  \label{EqDecompoPosGen}
\end{equation}
  where $\tilde{H}$ is the orthogonal complement of the first $4$ subspaces. 
  %All these subspaces are mutually orthogonal. 
  With respect to this orthogonal decomposition we can write:
\begin{align*}
  P_{M_1} &= I \oplus I \oplus 0 \oplus 0 \oplus \tilde{P_1} \\
  P_{M_2} &= I \oplus 0 \oplus I \oplus 0 \oplus \tilde{P_2} \\
  P_{M_2}P_{M_1} &= I \oplus 0 \oplus 0 \oplus 0 \oplus \tilde{P_2}\tilde{P_1} . \\ 
\end{align*}
  Suppose that the subspaces $M_1^{(\perp)} \cap M_2^{(\perp)}$ and $ \tilde{H} $ are not equal to $\lbrace 0 \rbrace$. Then using the formula $\NumRan{T\oplus S}= \Con{}{\NumRan{T},\NumRan{S}} $ (see for instance \cite{Gustafson_Rao}) we have $\NumRan{P_{M_2}P_{M_1}}= \Con{}{\lbrace 1 \rbrace \cup \lbrace 0 \rbrace \cup \NumRan{\tilde{P_2}\tilde{P_1}} } $. If $M_1 \cap M_2 = \lbrace 0 \rbrace$ and the other subspaces are not equal to $\lbrace 0 \rbrace$, then we have that  $\NumRan{P_{M_2}P_{M_1}}= \Con{}{\lbrace 0 \rbrace \cup \NumRan{\tilde{P_2}\tilde{P_1}} } $. The other cases when the others subspaces are equal to $\lbrace 0 \rbrace$ can be handle easily in the same way.

\begin{definition}
  Let $N_1, N_2$ be two closed subspaces of an Hilbert space $H$. We say that $(N_1,N_2)$ are in \emph{generic position} if:
  \[
  N_1 \cap N_2 = N_1^\perp \cap N_2 = N_1 \cap N_2^\perp = N_1^\perp \cap N_2^\perp = \lbrace 0 \rbrace.
  \]
\end{definition}

In Sections $2$ and $3$ we will denote pairs of subspaces in generic position by $(N_1, N_2)$, in order to distinguish them from pairs of general closed subspaces $(M_1, M_2)$.

%In Sections 2 and 3, when we will denote two subspaces $N_1, N_2$ (respectively $M_1,M_2$), we will suppose that they are in generic position (repsectively we make no assumptions).

We say that $A$ is unitary equivalent to $B$ (and write $A \UnitEq B$) if there exists a unitary operator $U$ such that $A=UBU^*$.
%This is a result from Halmos \cite{Halmos_1969} about the representation of pairs of projections. 
The following result, Halmos' two subspace theorem \cite{Halmos_1969}, is a useful description of orthogonal projections of two subspaces in generic position.
\begin{theoreme}
  If $(N_1,N_2)$ are in generic position, then there exists a subspace $K$ of $H$ such that $H$ is unitary equivalent to $K\oplus K$. Also, there exist two operators $C,S \in \B{K}$ such that $0\le C \le I$, $0\le S \le I$ and $C^2+S^2=I$, and such that $P_1$ and $P_2$ are simultaneously unitary equivalent to the following operators:
  \[
    P_1 \UnitEq \left(\begin{array}{cc}
I & 0 \\ 
0 & 0
\end{array} \right),
    P_2 \UnitEq \left(\begin{array}{cc}
C^2 & CS \\ 
CS  & S^2
\end{array} \right)  .   
  \]
 Moreover, there exists a self adjoint operator $T$ verifying $0 \le T \le \frac{\pi}{2} I$ such that $\cos(T)=C$ and $\sin(T)=S$.
\end{theoreme}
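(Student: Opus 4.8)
The plan is to choose $K = N_1$ itself and to manufacture the required unitary from the polar decomposition of the off-diagonal block of $P_2$. First I would fix the orthogonal decomposition $H = N_1 \oplus N_1^\perp$; with respect to it $P_1$ is automatically $\left(\begin{smallmatrix} I & 0 \\ 0 & 0\end{smallmatrix}\right)$, and I would write
\[
  P_2 = \begin{pmatrix} A & B \\ B^* & D \end{pmatrix},
  \qquad A = A^* \in \B{N_1}, \; D = D^* \in \B{N_1^\perp}, \; B \colon N_1^\perp \to N_1 .
\]
Since $P_2$ is a self-adjoint idempotent with $0 \le P_2 \le I$, expanding $P_2^2 = P_2$ yields the block relations $B B^* = A - A^2$, $B^* B = D - D^2$, $A B + B D = B$ (and its adjoint), together with $0 \le A \le I$ and $0 \le D \le I$. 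I then define by the functional calculus $C = A^{1/2}$ and $S = (I-A)^{1/2}$: these are commuting positive contractions with $C^2 + S^2 = I$, $C^2 = A$, and $B B^* = C^2 S^2 = (CS)^2$.

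The second step is to translate the genericity hypotheses into kernel conditions. Expressing ``$x \in N_1$ lies in $N_2$ or in $N_2^\perp$'' and ``$y \in N_1^\perp$ lies in $N_2$ or in $N_2^\perp$'' through the blocks, and using that for commuting positive operators one has $\ker(CS) = \ker C \oplus \ker S$ and $\ker\!\big(D(I-D)\big) = \ker D \oplus \ker(I-D)$, I would show that the four hypotheses $N_1 \cap N_2 = N_1 \cap N_2^\perp = N_1^\perp \cap N_2 = N_1^\perp \cap N_2^\perp = \{0\}$ are equivalent to $\ker S = \ker C = \ker(I-D) = \ker D = \{0\}$. Because $\ker B^* = \ker(BB^*) = \ker(CS)$ and $\ker B = \ker(B^*B) = \ker\!\big(D(I-D)\big)$, this makes both $B$ and $B^*$ injective with dense range.

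Third, I take the polar decomposition $B^* = V\,\lvert B^*\rvert = V\,CS$, where $V \colon N_1 \to N_1^\perp$ is a partial isometry with initial space $(\ker CS)^\perp$ and final space $\overline{\operatorname{ran} B^*}$. The triviality of the kernels above forces $V$ to be a \emph{unitary} from $N_1$ onto $N_1^\perp$, whence $N_1^\perp \simeq N_1 =: K$ and therefore $H \simeq K \oplus K$. Conjugating by the block-diagonal unitary $U = \operatorname{diag}(I, V^*) \colon H \to K \oplus K$ leaves $P_1$ unchanged and sends the lower-left block of $P_2$ to $V^* B^* = (V^*V)\,CS = CS$, while the upper-left block is already $A = C^2$.

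The remaining and most delicate point is to verify that the transformed lower-right block $V^* D V$ equals $S^2$, and not merely some solution of $X - X^2 = C^2 S^2$ (which $C^2$ also satisfies). I would extract this from the cross relation $A B + B D = B$: substituting $A = C^2$ and $B = CS\,V^*$, then factoring out the injective operator $CS$ and multiplying on the right by $V$, gives $C^2 + V^* D V - I = 0$, i.e. $V^* D V = I - C^2 = S^2$. Finally, setting $T = \arccos(C)$ by the functional calculus produces a self-adjoint operator with $0 \le T \le \tfrac{\pi}{2} I$, $\cos T = C$ and $\sin T = \sqrt{I - C^2} = S$. The real work lies in the second and fourth steps: pairing each genericity condition with the correct triviality of a kernel, and excluding the spurious alternative $V^* D V = C^2$.
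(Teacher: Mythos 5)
Your proof is correct, but note first that the paper itself offers no proof to compare against: this is Halmos' two subspaces theorem, which the paper simply quotes from \cite{Halmos_1969} (pointing to \cite{Bottcher_Spitkovsky_2010} for history), so your argument must stand on its own --- and it does. It follows what has become the standard route in the literature: write $P_2$ in block form with respect to $H=N_1\oplus N_1^\perp$, extract $BB^*=A-A^2$, $B^*B=D-D^2$ and $AB+BD=B$ from $P_2^2=P_2$, set $C=A^{1/2}$, $S=(I-A)^{1/2}$, and upgrade the partial isometry in the polar decomposition $B^*=V\,CS$ to a unitary $V\colon N_1\to N_1^\perp$ using the genericity hypotheses. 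Your translation of the four generic-position conditions into $\ker C=\ker S=\ker D=\ker(I-D)=\lbrace 0\rbrace$ is right; for instance, for $x\in N_1$, $P_2x=x$ forces $Ax=x$, and then $\norme{B^*x}^2=\ps{(A-A^2)x}{x}=0$ holds automatically, so $N_1\cap N_2$ is identified with $\ker(I-A)=\ker S$, and the other three cases go the same way. You also correctly isolate and resolve the one genuinely delicate point, namely why the transformed lower-right block is $S^2$ rather than the other solution $C^2$ of $X-X^2=(CS)^2$: substituting $B=CSV^*$ into $AB+BD=B$, commuting $C^2$ past $CS$, cancelling the injective operator $CS$, and using $V^*V=I$ gives $V^*DV=I-C^2=S^2$, which is exactly what is needed. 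Two cosmetic remarks. First, the auxiliary identity $\ker(CS)=\ker C\oplus\ker S$ for commuting positive operators is unnecessary (and, as stated, the sum need not be direct in general, although here $C^2+S^2=I$ forces $\ker C\cap\ker S=\lbrace 0\rbrace$): all you need is the trivial observation that $C$ and $S$ injective and commuting implies $CS$ injective, and likewise for $D(I-D)$. Second, the final step $\sin T=S$ with $T=\arccos(C)$ deserves the one-line justification that $\sin(\arccos\lambda)=\sqrt{1-\lambda^2}$ on $[0,1]$ together with the continuous functional calculus, which you use implicitly.
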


%In order to get $\NumRan{P_{M_2}P_{M_1}} $, we just need $\NumRan{P_2P_1}$, ie we need to characterize $\NumRan{P_{N_2}P_{N_1}} $ when $(N_1,N_2)$ are in generic position. 
For a historical discussion and several applications of Halmos' two subspace theorem we refer the reader to \cite{Bottcher_Spitkovsky_2010}.

\subsection{Support functions}
%Let $\mathscr{S}$ a convex set in $\C$. 
The notion of support functions is classical in convex analysis.

\begin{definition}
  Let $\mathscr{S}$ be a bounded convex set in $\C$. Let $\alpha \in \R$. The support function of $\mathscr{S}$, of angle $\alpha$, is defined by the following formula:
  \[
  \FctionSupp{\mathscr{S}}{\alpha}
    = \sup \lbrace \Reel{z {\exp(-\I \alpha)}}, z \in \mathscr{S} \rbrace.
  \]
\end{definition}

The following proposition shows that the support function characterizes the closure of convex sets.

\begin{proposition}
  We denote by $\adherence{\mathscr{S}}$ the closure of $\mathscr{S}$. We have:
  $$
%  \Interieur{\mathscr{S}}
%  &= \lbrace z\in \C, \forall \alpha, \Reel{z \conj{\exp(\I \alpha)}} < \FctionSupp{\mathscr{S}}{\alpha} \rbrace \\
  \adherence{\mathscr{S}}
  = \lbrace z\in \C, \forall \alpha, \Reel{z \exp(-\I \alpha)} \le \FctionSupp{\mathscr{S}}{\alpha} \rbrace .
  $$ 
\end{proposition}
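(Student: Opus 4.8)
The plan is to prove the two inclusions separately, the non-trivial one being a consequence of the separating hyperplane theorem in $\R^2 \cong \C$. Write $\mathscr{T} = \lbrace z \in \C : \forall \alpha, \Reel{z\exp(-\I\alpha)} \le \FctionSupp{\mathscr{S}}{\alpha} \rbrace$ for the set on the right-hand side. First I would observe that for $z = x + \I y$ and the unit vector $u_\alpha = (\cos\alpha, \sin\alpha)$ one has $\Reel{z\exp(-\I\alpha)} = x\cos\alpha + y\sin\alpha = \ps{z}{u_\alpha}$ under the identification of $\C$ with $\R^2$. Thus each condition defining $\mathscr{T}$ is the membership of $z$ in a closed half-plane, and $\mathscr{T}$, being an intersection of closed half-planes, is closed and convex.

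For the inclusion $\adherence{\mathscr{S}} \subseteq \mathscr{T}$, note that if $z \in \mathscr{S}$ then by the very definition of the support function as a supremum we have $\Reel{z\exp(-\I\alpha)} \le \FctionSupp{\mathscr{S}}{\alpha}$ for every $\alpha$, so $\mathscr{S} \subseteq \mathscr{T}$; since $\mathscr{T}$ is closed this gives $\adherence{\mathscr{S}} \subseteq \mathscr{T}$.

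The reverse inclusion $\mathscr{T} \subseteq \adherence{\mathscr{S}}$ I would prove by contraposition. Suppose $z_0 \notin \adherence{\mathscr{S}}$. The set $\adherence{\mathscr{S}}$ is convex (the closure of a convex set is convex) and, as $\mathscr{S}$ is bounded, it is compact. By the separating hyperplane theorem applied to the compact convex set $\adherence{\mathscr{S}}$ and the external point $z_0$, there exist a unit vector $u$ and a real constant $c$ with $\ps{z}{u} \le c < \ps{z_0}{u}$ for all $z \in \adherence{\mathscr{S}}$. Writing $u = u_\alpha = (\cos\alpha, \sin\alpha)$ for a suitable $\alpha$, and using that the continuous function $z \mapsto \Reel{z\exp(-\I\alpha)}$ has the same supremum over $\mathscr{S}$ and over its closure, I obtain $\FctionSupp{\mathscr{S}}{\alpha} = \sup_{z\in\adherence{\mathscr{S}}} \ps{z}{u_\alpha} \le c < \ps{z_0}{u_\alpha} = \Reel{z_0\exp(-\I\alpha)}$. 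This exhibits an angle $\alpha$ violating the defining inequality of $\mathscr{T}$, hence $z_0 \notin \mathscr{T}$, which is what we wanted.

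The only genuinely non-elementary ingredient is the strict separation of a compact convex set from an external point; in the plane this follows from taking the nearest point of $\adherence{\mathscr{S}}$ to $z_0$ and using the line orthogonal to the segment joining them, so I expect no real obstacle beyond carefully matching the geometric normal direction $u$ with an angle $\alpha$ and checking that its normalization to a unit vector does not affect the sense of the inequalities.
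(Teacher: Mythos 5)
Your proof is correct. The paper does not actually prove this proposition itself---it simply refers the reader to Rockafellar's book---and your argument (the trivial inclusion from the definition of the supremum, plus strict separation of the external point from the compact convex set $\adherence{\mathscr{S}}$ via the nearest-point projection, with the normal direction rewritten as $u_\alpha=(\cos\alpha,\sin\alpha)$ so that $\ps{z}{u_\alpha}=\Reel{z\exp(-\I\alpha)}$) is precisely the standard convex-analysis proof found there, so nothing further is needed.
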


  We will need in this paper the following result about support functions.
\begin{lemme}
  \label{LemFctionSuppFusion2Ens}
  Let $\mathscr{S}_1, \mathscr{S}_2 $ be two bounded convex sets of the plane with  
  support functions $\FctionSupp{\mathscr{S}_1}{\alpha}$ and, respectively, $\FctionSupp{\mathscr{S}_2}{\alpha}$. Let $\mathscr{S} $ be such that
  %\[
  $
  \FctionSupp{\mathscr{S}}{\alpha}= \max_{i=1,2} \FctionSupp{\mathscr{S}_i}{\alpha}.
  $
  %\]
 Then we have $\adherence{\mathscr{S}} = \adherence{\Con{}{\mathscr{S}_1,\mathscr{S}_2}}$.
\end{lemme}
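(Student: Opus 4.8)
The plan is to reduce everything to the characterizing proposition just stated, which says that a bounded convex set is determined, up to closure, by its support function. Since both $\mathscr{S}$ and $\Con{}{\mathscr{S}_1,\mathscr{S}_2}$ are bounded convex sets, it will suffice to prove that they share the same support function; the desired equality of closures then follows at once from that proposition. Thus the whole argument comes down to the identity
\[
\FctionSupp{\Con{}{\mathscr{S}_1,\mathscr{S}_2}}{\alpha} = \max_{i=1,2}\FctionSupp{\mathscr{S}_i}{\alpha} = \FctionSupp{\mathscr{S}}{\alpha},
\]
the second equality being precisely the hypothesis on $\mathscr{S}$.

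First I would establish the inequality $\FctionSupp{\Con{}{\mathscr{S}_1,\mathscr{S}_2}}{\alpha} \le \max_{i=1,2} \FctionSupp{\mathscr{S}_i}{\alpha}$. Take an arbitrary $z \in \Con{}{\mathscr{S}_1,\mathscr{S}_2}$; by the definition of the convex hull it can be written as $z = \sum_n x_n \varepsilon_n$ with $\varepsilon_n \in \mathscr{S}_1 \cup \mathscr{S}_2$, $x_n \in [0,1]$ and $\sum_n x_n = 1$. Using linearity of $\Reel{\cdot}$ I would write $\Reel{z\exp(-\I\alpha)} = \sum_n x_n \Reel{\varepsilon_n \exp(-\I\alpha)}$ and bound each term by $\max_{i} \FctionSupp{\mathscr{S}_i}{\alpha}$, so that $\Reel{z\exp(-\I\alpha)} \le \max_{i} \FctionSupp{\mathscr{S}_i}{\alpha}$ since the coefficients sum to one. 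Taking the supremum over $z$ yields the claimed inequality. The reverse inequality is immediate: since $\mathscr{S}_1, \mathscr{S}_2 \subseteq \Con{}{\mathscr{S}_1,\mathscr{S}_2}$, each $\FctionSupp{\mathscr{S}_i}{\alpha}$ is bounded above by $\FctionSupp{\Con{}{\mathscr{S}_1,\mathscr{S}_2}}{\alpha}$, hence so is their maximum.

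Combining the two inequalities gives the displayed identity, and applying the characterizing proposition to both $\mathscr{S}$ and $\Con{}{\mathscr{S}_1,\mathscr{S}_2}$ — each a bounded convex set with the same support function $\FctionSupp{\mathscr{S}}{\alpha}$ — concludes that $\adherence{\mathscr{S}} = \adherence{\Con{}{\mathscr{S}_1,\mathscr{S}_2}}$.

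I do not expect any serious obstacle here. The only point requiring a little care is justifying that $\Reel{\cdot}$ may be distributed over a possibly infinite convex combination, which is handled by the boundedness hypothesis on $\mathscr{S}_1$ and $\mathscr{S}_2$ (the series $\sum_n x_n \varepsilon_n$ then converges absolutely). Everything else is a direct application of the definitions and the preceding proposition.
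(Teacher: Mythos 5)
Your proof is correct. Note that the paper itself gives no internal proof of this lemma: it refers the reader to Rockafellar's book for the facts about support functions, so your proposal is supplying the argument rather than paralleling one. What you do is exactly the standard route, and the two ingredients are the right ones: the identity $\FctionSupp{\Con{}{\mathscr{S}_1,\mathscr{S}_2}}{\alpha}=\max_{i=1,2}\FctionSupp{\mathscr{S}_i}{\alpha}$, which you prove directly from the paper's series definition of the convex hull (your remark that boundedness of $\mathscr{S}_1,\mathscr{S}_2$ gives absolute convergence of $\sum_n x_n\varepsilon_n$, so that $\Reel{\cdot}$ passes through the series, is the only analytic point and you handle it), and the preceding proposition that the support function determines the closure of a bounded convex set. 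One small point is worth making explicit: the lemma as stated only says ``let $\mathscr{S}$ be such that $\FctionSupp{\mathscr{S}}{\alpha}=\max_{i=1,2}\FctionSupp{\mathscr{S}_i}{\alpha}$'', while the characterization proposition applies to convex sets, so your opening sentence ``both $\mathscr{S}$ and $\Con{}{\mathscr{S}_1,\mathscr{S}_2}$ are bounded convex sets'' tacitly adds the hypothesis that $\mathscr{S}$ is convex; without it the conclusion can fail (for instance $\mathscr{S}$ could be the boundary of $\Con{}{\mathscr{S}_1,\mathscr{S}_2}$, which has the same support function but a different closure). This is harmless in context, since in the paper's application $\mathscr{S}$ is a numerical range, hence convex by the Toeplitz--Hausdorff theorem and bounded; but stating it keeps the reduction to the characterization proposition airtight.
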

  %You can find proof of this proposition and this lemma in \cite{Rockfellar}.
A proof of the above propositions and more information about support functions are available in \cite{Rockfellar}. 

\subsection{Cosine of Friedrichs angle of two subspaces}

  We now introduce the cosine of the Friederichs angle between two subspaces. We refer to \cite{Deustch_2001} as a source for more information.

\begin{definition}
  \label{DefCosineFriederich}
  Let $M_1,M_2 $ be two closed subspaces of $H$, with intersection $M=M_1\cap M_2$. We define the cosine of the Friederichs angle between $M_1$ and $M_2 $ by the following formula:
  $$
  \cos(M_1,M_2) = \sup \lbrace \abs{\ps{x}{y}}, x \in M_1 \cap M^\perp, y \in M_2 \cap M^\perp, \norme{x}=\norme{y}=1 \rbrace.
  $$
\end{definition}

An equivalent way (\cite{Kaylar_Weinert_1988, Deustch_2001}) to express the above cosine is given by the formula 
$\cos^2(M_1,M_2)= \norme{P_{M_1}P_{M_2}P_{M_1} - P_{M_1 \cap M_2}} $. The following result, which will be helpful later on, offers a spectral interpretation of $ \cos(M_1,M_2)$.

\begin{lemme}
\label{LCosSp}
  Let $M_1$ and $M_2 $ be two closed subspaces of $H$. Then 
  $$
  \cos(M_1,M_2) = \sup \lbrace\sqrt{\lambda} : \lambda \in \Spec{P_{M_2}P_{M_1}} \setminus \lbrace 1 \rbrace \rbrace.
  $$
\end{lemme}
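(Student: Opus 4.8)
The plan is to reduce everything to the generic-position part supplied by Halmos' theorem and to show that both sides of the claimed identity equal $\sqrt{\sup\sigma(C^2)}$, where $C$ is the operator furnished by that theorem. First I would invoke the orthogonal decomposition \eqref{EqDecompoPosGen}; restricted to $\tilde{H}$ the pair $(M_1,M_2)$ is in generic position, so Halmos' two subspaces theorem gives a space $K$ with $\tilde{H}\UnitEq K\oplus K$ and
\[
\tilde{P_1}\UnitEq\begin{pmatrix}I&0\\0&0\end{pmatrix},\qquad
\tilde{P_2}\UnitEq\begin{pmatrix}C^2&CS\\CS&S^2\end{pmatrix}.
\]
A direct block computation gives $\tilde{P_1}\tilde{P_2}\tilde{P_1}\UnitEq\begin{pmatrix}C^2&0\\0&0\end{pmatrix}$. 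Since $P_{M_1}P_{M_2}P_{M_1}=I\oplus0\oplus0\oplus0\oplus\tilde{P_1}\tilde{P_2}\tilde{P_1}$ while $P_{M_1\cap M_2}=I\oplus0\oplus0\oplus0\oplus0$ in the same decomposition, their difference is $0\oplus0\oplus0\oplus0\oplus\tilde{P_1}\tilde{P_2}\tilde{P_1}$, whose norm is $\norme{C^2}=\sup\sigma(C^2)$ because $C^2$ is positive and self-adjoint. By the recalled identity $\cos^2(M_1,M_2)=\norme{P_{M_1}P_{M_2}P_{M_1}-P_{M_1\cap M_2}}$ this already yields $\cos^2(M_1,M_2)=\sup\sigma(C^2)$.

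Next I would compute the spectrum of $P_{M_2}P_{M_1}=I\oplus0\oplus0\oplus0\oplus\tilde{P_2}\tilde{P_1}$, where $\tilde{P_2}\tilde{P_1}\UnitEq\begin{pmatrix}C^2&0\\CS&0\end{pmatrix}$. Because $\begin{pmatrix}C^2-\mu&0\\CS&-\mu\end{pmatrix}$ is block lower triangular with an invertible corner $-\mu$ whenever $\mu\ne0$, its invertibility is equivalent to that of $C^2-\mu$, so $\Spec{\tilde{P_2}\tilde{P_1}}\setminus\{0\}=\sigma(C^2)\setminus\{0\}$. Combined with $0\in\Spec{\tilde{P_2}\tilde{P_1}}$ and $0\le C^2\le I$, this gives the chain of inclusions $\sigma(C^2)\subseteq\Spec{P_{M_2}P_{M_1}}\subseteq\{0,1\}\cup\sigma(C^2)\subseteq[0,1]$. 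Deleting $\{1\}$ and taking suprema then shows $\sup\bigl(\Spec{P_{M_2}P_{M_1}}\setminus\{1\}\bigr)=\sup\bigl(\sigma(C^2)\setminus\{1\}\bigr)$, the harmless extra point $0$ not affecting the supremum.

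The crux, and the step I expect to be the main obstacle, is to delete the point $1$ without lowering the supremum, i.e.\ to prove $\sup\bigl(\sigma(C^2)\setminus\{1\}\bigr)=\sup\sigma(C^2)$. If $\sup\sigma(C^2)<1$ this is trivial. If $\sup\sigma(C^2)=1$, then $1\in\sigma(C^2)$ since the spectrum is closed, and I would argue that $1$ is \emph{not} an eigenvalue of $C^2$: any unit $x$ with $C^2x=x$ forces $Sx=0$, so $\binom{x}{0}$ is fixed by both $\tilde{P_1}$ and $\tilde{P_2}$ and hence lies in $\tilde{M_1}\cap\tilde{M_2}=\{0\}$, a contradiction. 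For a self-adjoint operator a spectral point that is not an eigenvalue cannot be isolated, so $1$ is a limit of points of $\sigma(C^2)\setminus\{1\}$ and the supremum is unchanged. Chaining the three equalities gives $\cos^2(M_1,M_2)=\sup\bigl(\Spec{P_{M_2}P_{M_1}}\setminus\{1\}\bigr)$, and taking square roots finishes the proof. The degenerate case $\tilde{H}=\{0\}$ (no generic-position part) I would dispatch separately: there the norm formula gives $\cos(M_1,M_2)=0$ and $\Spec{P_{M_2}P_{M_1}}\setminus\{1\}\subseteq\{0\}$, so both sides vanish, the empty-supremum convention being handled by hand.
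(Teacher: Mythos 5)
Your proof is correct, but it takes precisely the route the paper deliberately sets aside: the text remarks that the lemma ``can be seen as a consequence of Halmos' two subspace theorem'' and then announces ``a different proof''. The paper's argument never invokes Halmos; it splits $H=(M_1\cap M_2)\oplus(M_1\cap M_2)^\perp$, writes $P_{M_1}P_{M_2}P_{M_1}=P_{M_1\cap M_2}\oplus\bigl(P_{M_1}P_{M_2}P_{M_1}-P_{M_1\cap M_2}\bigr)$, and combines the norm identity $\cos^2(M_1,M_2)=\norme{P_{M_1}P_{M_2}P_{M_1}-P_{M_1\cap M_2}}=\sup\Spec{P_{M_1}P_{M_2}P_{M_1}-P_{M_1\cap M_2}}$ with the elementary relation $\Spec{P_{M_1}P_{M_2}P_{M_1}}\setminus\lbrace 0\rbrace=\Spec{P_{M_2}P_{M_1}}\setminus\lbrace 0\rbrace$; this is shorter and needs no operator-matrix machinery. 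What your version buys is twofold: it yields the explicit spectral picture $\Spec{P_{M_2}P_{M_1}}\subseteq\lbrace 0,1\rbrace\cup\sigma(C^2)$ via the block-lower-triangular invertibility argument, and --- more substantively --- it makes explicit the one genuinely delicate point, namely that deleting $\lbrace 1\rbrace$ does not lower the supremum when $\cos(M_1,M_2)=1$; your argument (an eigenvector for the eigenvalue $1$ of $C^2$ would produce a nonzero vector in the intersection of the generic-position subspaces, and a non-eigenvalue spectral point of a self-adjoint operator cannot be isolated) is exactly the justification that the paper's step $\sup\Spec{P_{M_1}P_{M_2}P_{M_1}-P_{M_1\cap M_2}}=\sup\Spec{P_{M_1}P_{M_2}P_{M_1}}\setminus\lbrace 1\rbrace$ quietly glosses over. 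Your handling of the degenerate case $\tilde{H}=\lbrace 0\rbrace$ and of the empty-supremum convention (which arises only for $M_1=M_2=H$, where both sides of the identity are suprema over empty sets) is consistent with the statement, so the proposal stands as a complete, if less economical, alternative proof.
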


This result can be seen as a consequence of Halmos' two subspace theorem (see \cite{Bottcher_Spitkovsky_2010}). We present here a different proof.

\begin{proof}
  We start by remarking that $\Spec{P_{M_2}P_{M_1}}$ is a compact subset of $[0,1]$. Indeed, we have 
  $\Spec{P_{M_1}P_{M_2}P_{M_1}} \setminus \lbrace 0 \rbrace = \Spec{(P_{M_2}P_{M_1})P_{M_1}} \setminus \lbrace 0 \rbrace = \Spec{P_{M_2}P_{M_1}} \setminus \lbrace 0 \rbrace $ and $P_{M_1}P_{M_2}P_{M_1}$ is a self-adjoint operator which is positive and of norm less or equal to one. Using the decomposition $H = (M_1 \cap M_2) \oplus  (M_1 \cap M_2)^\perp$ we can write $ P_{M_1}P_{M_2}P_{M_1} = P_{M_1 \cap M_2} \oplus (P_{M_1}P_{M_2}P_{M_1} - P_{M_1 \cap M_2})$, so we get $\Spec{P_{M_1}P_{M_2}P_{M_1}}= \Spec{P_{M_1 \cap M_2}} \cup \Spec{P_{M_1}P_{M_2}P_{M_1} - P_{M_1 \cap M_2}} $. 
Since
$$
\cos^2(M_1,M_2) = \norme{P_{M_1}P_{M_2}P_{M_1} - P_{M_1 \cap M_2}} = \sup \Spec{P_{M_1}P_{M_2}P_{M_1} - P_{M_1 \cap M_2}},$$
we obtain
$$
\cos^2(M_1,M_2) = \sup \Spec{P_{M_1}P_{M_2}P_{M_1}} \setminus \lbrace 1 \rbrace = \sup \Spec{P_{M_2}P_{M_1}} \setminus \lbrace 1 \rbrace .
$$
\end{proof}
%using that $\Spec{P_{M_1}P_{M_2}P_{M_1}} \setminus \lbrace 0 \rbrace = \Spec{(P_{M_2}P_{M_1})P_{M_1}} \setminus \lbrace 0 \rbrace = \Spec{P_{M_2}P_{M_1}} \setminus \lbrace 0 \rbrace $, and the fact that $P_{M_1}P_{M_2}P_{M_1} - P_{M_1 \cap M_2}$ is self-adjoint, we get that $\cos^2(M_1,M_2)= \sup \Spec{P_{M_1}P_{M_2}P_{M_1} - P_{M_1 \cap M_2}} = \sup \Spec{P_{M_1}P_{M_2}P_{M_1}} \setminus \lbrace 1 \rbrace = \sup \Spec{P_{M_2}P_{M_1}} \setminus \lbrace 1 \rbrace $.
%  
%  $P_{M_1}P_{M_2}P_{M_1}$ is a self-adjoint operator

\section{Description of the numerical range knowing the spectrum}
\subsection{The closure of the numerical range as a convex hull of ellipses}

  The goal of this section is to prove Theorem \ref{CoroWP2P1} using a description of the support function of $\adherence{\NumRan{P_2P_1}} $, which is a closed convex set of $\C$. This idea appeared for instance in \cite{Lenard_1971} in a different context. We will first assume that we are in generic position; the general case will be easily deduced from this particular one. The reader could see \cite{Riesz_Nagy_1990} for more details about borelian functional calculus on self adjoint operators.

\begin{lemme}
\label{RqriTilde}
Suppose that $(N_1,N_2)$ is in generic position. Denote $P_i = P_{N_i} $, $i = 1,2 $, the orthogonal projection on $N_i$. Then the support function of the numerical range of $P_2P_1$ is:
  \[
  \FctionSupp{\NumRan{P_2P_1}}{\alpha} = \sup_{\lambda \in \Spec{P_2P_1}} \frac{1}{2}(\cos(\alpha) \lambda + \sqrt{\lambda(1-\sin(\alpha)^2\lambda)}).
  \]
\end{lemme}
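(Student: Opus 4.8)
The plan is to compute the support function through the standard identity
\[
\FctionSupp{\NumRan{P_2P_1}}{\alpha} = \sup \Spec{\tfrac12\left(e^{-\I\alpha}P_2P_1 + e^{\I\alpha}(P_2P_1)^\ast\right)}.
\]
This holds because $\Reel{e^{-\I\alpha}\ps{P_2P_1 x}{x}} = \ps{A_\alpha x}{x}$ for the self-adjoint operator $A_\alpha := \tfrac12(e^{-\I\alpha}P_2P_1 + e^{\I\alpha}(P_2P_1)^\ast)$, so that taking the supremum over unit vectors $x$ yields the top of the spectrum of $A_\alpha$. Everything then reduces to identifying $\sup \Spec{A_\alpha}$.

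First I would feed the Halmos representation into this. Writing $P_1 \UnitEq \left(\begin{smallmatrix} I & 0 \\ 0 & 0\end{smallmatrix}\right)$ and $P_2 \UnitEq \left(\begin{smallmatrix} C^2 & CS \\ CS & S^2\end{smallmatrix}\right)$ on $K\oplus K$ (using $CS=SC$, since $C=\cos T$ and $S=\sin T$ commute, where $T$ is the angle operator of Halmos' theorem), a direct multiplication gives $P_2P_1 \UnitEq \left(\begin{smallmatrix} C^2 & 0 \\ CS & 0\end{smallmatrix}\right)$, and hence
\[
A_\alpha \UnitEq \begin{pmatrix} \cos(\alpha)\,C^2 & \tfrac12 e^{\I\alpha}CS \\ \tfrac12 e^{-\I\alpha}CS & 0 \end{pmatrix}.
\]
The crucial observation is that every block of $A_\alpha$ is a function of the single self-adjoint operator $T$ (recall $S=\sqrt{I-C^2}$), so all four blocks commute.

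Applying the spectral theorem to $T$, I would represent $K$ as an $\Ldeux$-space of a spectral measure on which $C$ acts as multiplication by $\cos(t)$ and $S$ as multiplication by $\sin(t)$, with $t$ ranging over $\Spec{T}\subset[0,\pi/2]$. Under the induced identification $K\oplus K \cong \Ldeux\oplus \Ldeux$, the operator $A_\alpha$ becomes multiplication by the Hermitian-matrix-valued symbol
\[
A_\alpha(t) = \begin{pmatrix} \cos(\alpha)\cos^2(t) & \tfrac12 e^{\I\alpha}\cos(t)\sin(t) \\ \tfrac12 e^{-\I\alpha}\cos(t)\sin(t) & 0 \end{pmatrix}.
\]
The spectrum of such a multiplication operator is the closure of the union of the pointwise eigenvalues over the essential support, so $\sup\Spec{A_\alpha} = \sup_{t\in\Spec{T}}\mu_+(t)$, where $\mu_+(t)$ is the larger root of $\mu^2 - \cos(\alpha)\cos^2(t)\,\mu - \tfrac14\cos^2(t)\sin^2(t)=0$, namely $\mu_+(t) = \tfrac12(\cos(\alpha)\cos^2(t) + \sqrt{\cos^2(\alpha)\cos^4(t)+\cos^2(t)\sin^2(t)})$. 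It then remains routine bookkeeping: substituting $\lambda=\cos^2(t)$ and simplifying the radicand via $\cos^2(\alpha)\lambda^2 + \lambda(1-\lambda) = \lambda(1-\sin^2(\alpha)\lambda)$ gives $\mu_+ = \tfrac12(\cos(\alpha)\lambda + \sqrt{\lambda(1-\sin^2(\alpha)\lambda)})$. Since $\cos^2$ maps $\Spec{T}$ onto $\Spec{C^2}$, and since $\Spec{P_2P_1}\setminus\{0\} = \Spec{P_1P_2P_1}\setminus\{0\} = \Spec{C^2}\setminus\{0\}$ while $0\in\Spec{P_2P_1}$ contributes only the harmless fiber $\mu_+=0$, the relevant set of $\lambda$ is exactly $\Spec{P_2P_1}$, and the stated formula follows.

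The main obstacle I anticipate is this spectral-reduction step: rigorously turning the commuting block matrix $A_\alpha$ into a matrix-multiplication operator and justifying that the top of its spectrum equals the supremum over fibers of the pointwise top eigenvalue. This needs the joint functional calculus of the commuting family $\{C,S\}$ (equivalently, of $T$) together with the clean statement that the spectrum of multiplication by a continuous Hermitian-matrix-valued symbol is the essential range of its eigenvalue functions; once that is in place everything else is an elementary computation with a $2\times2$ characteristic polynomial. A secondary point to treat carefully is the reconciliation of the endpoints $\lambda\in\{0,1\}$ between $\Spec{T}$ and $\Spec{P_2P_1}$, but these affect only boundary fibers and not the supremum.
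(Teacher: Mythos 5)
Your proposal is correct and follows essentially the same route as the paper: both reduce the support function to $\sup\Spec{\Reel{\exp(-\I\alpha)P_2P_1}}$, use Halmos' theorem to write this self-adjoint operator as a $2\times2$ block matrix in functions of the angle operator $T$, identify the larger fiberwise eigenvalue $v_1(t,\alpha)=\frac{1}{2}\left(\cos(\alpha)\cos^2(t)+\cos(t)\sqrt{1-\sin^2(\alpha)\cos^2(t)}\right)$, and reparametrize by $\lambda=\cos^2(t)$ using $\Spec{P_2P_1}\setminus\lbrace 0\rbrace=\Spec{P_1P_2P_1}\setminus\lbrace 0\rbrace$. The only difference is technical bookkeeping: where you invoke the spectral-theorem multiplication-operator model to justify the fiberwise spectral reduction, the paper instead explicitly constructs the diagonalizing unitary $U(T,\alpha)$ via Borel functional calculus (handling the degenerate fiber $t=\frac{\pi}{2}$ by a limit), and both justifications are standard and valid.
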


%The proof will follow the same ideas used in \cite{Lenard_1971} in order to find $\NumRan{P_1+\I P_2}$.

\begin{proof}
   We fix $\alpha \in [0,2\pi]$. We have that 
\begin{align*}
  \FctionSupp{\NumRan{P_2P_1}}{\alpha}
    &= \sup \lbrace \Reel{\ps{P_2P_1h}{h}\exp(-\I \alpha)}, h \in H, \norme{h}=1 \rbrace \\
    &= \sup \lbrace \Reel{\ps{\exp(-\I \alpha)P_2P_1h}{h}}, h \in H, \norme{h}=1 \rbrace \\
    &= \sup \lbrace \ps{\Reel{\exp(-\I \alpha)P_2P_1}h}{h}, h \in H, \norme{h}=1 \rbrace .
\end{align*}
%   \begin{align*}
%     &= \sup \lbrace \Reel{\ps{P_2P_1h}{h}}\cos(\alpha) + \Ima{\ps{P_2P_1h}{h}}\sin(\alpha), h \in H, \norme{h}=1 \rbrace .
%   \end{align*}
%   We have that:
%   \begin{align*}
%     \Reel{\ps{P_2P_1h}{h}}&\cos(\alpha) + \Ima{\ps{P_2P_1h}{h}}\sin(\alpha) \\
%     &= \ps{P_2P_1h}{h}\frac{\cos(\alpha)+\I \sin(\alpha)}{2}
%        + \conj{\ps{P_2P_1h}{h}}\frac{\cos(\alpha)-\I \sin(\alpha)}{2} \\
%     &= \Reel{\exp(\I \alpha)\ps{P_2P_1h}{h}} \\
%     &= \ps{\Reel{\exp(\I \alpha)P_2P_1}h}{h} . 
%   \end{align*}
%  So we have that $  \FctionSupp{\NumRan{P_2P_1}}{\alpha} = \sup_{\norme{h}=1} \ps{\Reel{\exp(\I \alpha)P_2P_1}h}{h} $.
  Applying Halmos' two subspace theorem, there exists a self adjoint operator $T$ such that
  \[
    P_2P_1 \UnitEq \left(\begin{array}{cc}
\cos(T)^2 & 0 \\ 
\cos(T)\sin(T) & 0
\end{array} \right), 
    P_1P_2 \UnitEq \left(\begin{array}{cc}
\cos(T)^2 & \cos(T)\sin(T) \\ 
0  & 0
\end{array} \right).     
  \]
  So we have that
  \[
  \Reel{\exp(-\I \alpha)P_2P_1}
    \UnitEq \left(\begin{array}{cc}
\cos(\alpha)\cos(T)^2 & \frac{\exp(\I \alpha)}{2} \cos(T)\sin(T) \\ 
\frac{\exp(-\I \alpha)}{2}\cos(T)\sin(T) & 0
\end{array} \right).
  \]
  We set
  \[
  M(t,\alpha)= \left(\begin{array}{cc}
\cos(\alpha)\cos(t)^2 & \frac{\exp(\I \alpha)}{2} \cos(t)\sin(t) \\ 
\frac{\exp(-\I \alpha)}{2}\cos(t)\sin(t) & 0
\end{array} \right).
  \]
  Then we have that $ \Reel{\exp(-\I \alpha)P_2P_1} \UnitEq M(T,\alpha)$.   
  After some computations we get that $ M(t,\alpha)= U^*(t,\alpha)D(t,\alpha)U(t,\alpha) $ with
  $$
   D(t,\alpha)= \left(\begin{array}{cc}
v_1(t,\alpha) & 0 \\ 
0 & v_2(t,\alpha)
\end{array} \right),
   U(t,\alpha)= \left(\begin{array}{cc}
\frac{2v_1(t,\alpha)}{u_1(t,\alpha)} & \frac{2v_2(t,\alpha)}{u_2(t,\alpha)} \\ 
\frac{\exp(\I \alpha)\cos(t)\sin(t)}{u_1(t,\alpha)} & \frac{\exp(\I \alpha)\cos(t)\sin(t)}{u_2(t,\alpha)}
\end{array} \right),
  $$
  and  $v_1(t,\alpha)= \frac{1}{2}(\cos(\alpha)\cos(t)^2+\cos(t)\sqrt{1-\sin(\alpha)^2\cos(t)^2})$ and $v_2(t,\alpha)= \frac{1}{2}(\cos(\alpha)\cos(t)^2-\cos(t)\sqrt{1-\sin(\alpha)^2\cos(t)^2})$ and $u_i(t,\alpha)= \sqrt{4(v_i(t,\alpha))^2 + \cos(t)^2 \sin(t)^2} $. One can easily check by passing to the limit when $t$ goes to $ \frac{\pi}{2}$ that:
  $$
  U(\frac{\pi}{2},\alpha)= \left(\begin{array}{cc}
\frac{1}{\sqrt{2}} & \frac{-1}{\sqrt{2}} \\ 
\frac{\exp(\I \alpha)}{\sqrt{2}} & \frac{\exp(\I \alpha)}{\sqrt{2}}
\end{array} \right).
  $$
   We also have that $U(t,\alpha)U^*(t,\alpha) = U^*(t,\alpha)U(t,\alpha) = I $. As all entries of  $U(t,\alpha)$ are borelians functions and $T$ is a self adjoint operator, one can define $\frac{2v_1(T,\alpha)}{u_1(T,\alpha)}$, $\frac{2v_2(T,\alpha)}{u_2(T,\alpha)}$, $\frac{\exp(\I \alpha)\cos(T)\sin(T)}{u_1(T,\alpha)}$ and $\frac{\exp(\I \alpha)\cos(T)\sin(T)}{u_2(T,\alpha)}$. So we can define $ D(T,\alpha) $ and $ U(T,\alpha) $, and we have that $M(T,\alpha)= U^*(T,\alpha)D(T,\alpha)U(T,\alpha) $ and $U(T,\alpha)U^*(T,\alpha) = U^*(T,\alpha)U(T,\alpha) = I $. So $M(T,\alpha) \UnitEq D(T,\alpha) =  v_1(T,\alpha) \oplus v_2(T,\alpha) $. Note also that $v_1(t,\alpha) \ge 0$ and $v_2(t,\alpha) \le 0$ for every $t\in [0,\frac{\pi}{2}]$ and $\alpha \in [0,2\pi]$ . As $\Spec{T} \subset [0,\frac{\pi}{2}]$, we obtain the following order relations $v_2(T,\alpha) \le 0 \le v_1(T,\alpha)$. Note that the operators $v_i(T,\alpha)$ are self-adjoint. Therefore
%%%%%%%%%%%%%%%%%%%%    
 % Note that for every $t\in [0,\frac{\pi}{2}]$ and every $\alpha \in [0,2\pi]$, $u_i(t,\alpha) \ne 0 $. As $\Spec{T} \subset [0,\frac{\pi}{2}]$, by the spectral mapping theorem we get that $0 \notin u_i(T,\alpha) $. So $(u_i(T,\alpha))^{-1}$, $U(T,\alpha)$ and $D(T,\alpha)$ are well defined and satisfies    
%%%%%%
%  As $M(t,\alpha)$ is hermitian, if we denote $v_1(t,\alpha)$ and $v_2(t,\alpha)$ the eigenvalues of $M(t,\alpha)$, we get that
%  \[ 
%  M(t,\alpha) \UnitEq \left(\begin{array}{cc}
%v_1(t,\alpha) & 0 \\ 
%0 & v_2(t,\alpha)
%\end{array} \right).
%  \]
%It can be computed that $v_1(t,\alpha)= \frac{1}{2}(\cos(\alpha)\cos(t)^2+\cos(t)\sqrt{1-\sin(\alpha)^2\cos(t)^2})$ and $v_2(t,\alpha)= \frac{1}{2}(\cos(\alpha)\cos(t)^2-\cos(t)\sqrt{1-\sin(\alpha)^2\cos(t)^2})$. We have $v_1(t,\alpha) \ge 0$ and $v_2(t,\alpha) \le 0$ for 
%every $t\in [0,\frac{\pi}{2}]$. As $\Spec{T} \subset [0,\frac{\pi}{2}]$, we obtain the following order relations $v_2(T,\alpha) \le 0 \le v_1(T,\alpha)$.
%This implies  $  M(T,\alpha) =  v_1(T,\alpha) \oplus v_2(T,\alpha)$. Moreover, the operators $v_i(T,\alpha)$ are self-adjoint, $i=1,2$. Therefore
\begin{align*}
  \FctionSupp{\NumRan{P_2P_1}}{\alpha}
  &= \sup_{\norme{h}=1} \ps{\Reel{\exp(-\I \alpha)P_2P_1}h}{h} \\
  &= \sup_{\norme{x}=1} \ps{ v_1(T,\alpha)x}{x} \\
  &= \norme{ v_1(T,\alpha)} \\
  &= \sup_{t_0 \in \Spec{T}} v_1(t_0,\alpha).
\end{align*} 
Halmos' theorem implies that 
 \[
  P_1P_2P_1 \UnitEq \left(\begin{array}{cc}
\cos(T)^2 & 0 \\ 
0  & 0
\end{array} \right).
  \]
  We have $\Spec{P_2P_1} \setminus \lbrace 0 \rbrace = \Spec{(P_2P_1)P_1} \setminus \lbrace 0 \rbrace =\Spec{P_1P_2P_1} \setminus \lbrace 0 \rbrace $, and $\cos^2(\Spec{T}) \cup \lbrace 0 \rbrace = \Spec{P_1P_2P_1} $. Denoting $\lambda = \cos(t)^2 $ and $\tilde{v_i}(\lambda,\alpha) = \frac{1}{2}(\cos(\alpha)\lambda \pm \sqrt{\lambda(1-\sin(\alpha)^2\lambda)})$, we get 
  $ 
  \FctionSupp{\NumRan{P_2P_1}}{\alpha} = \sup_{\lambda \in \Spec{P_2P_1}} \tilde{v_1}(\lambda,\alpha) 
  $.
\end{proof}

\begin{remarque}
  \label{RqFctionSuppWP2P1}
  Using a formula due to Lumer \cite[Lemma 12]{Lumer_1961}, we obtain  \begin{align*}
  \FctionSupp{\NumRan{P_2P_1}}{\alpha}
    &= \sup \Reel{\NumRan{\exp(-\I \alpha)P_2P_1}} \\
    &= \lim_{t \rightarrow 0^+} \frac{\norme{I-t\Reel{\exp(-\I \alpha)P_2P_1}}-1}{t} \\
    &= \lim_{t \rightarrow 0^+} \frac{\norme{I-t\exp(-\I \alpha)P_2P_1}-1}{t} .
  \end{align*}
\end{remarque}

In order to make the formula of $\adherence{\NumRan{P_2P_1}}$ more explicit, we will describe it as the convex hull of ellipses $\Ellipse{\lambda} $. Recall that for $\lambda \in [0,1]$, $\Ellipse{\lambda}$ denote the domain delimited by the ellipse with foci $ 0$ and  $\lambda$, and minor axis length $\sqrt{\lambda(1-\lambda)} $.  Several of these ellipses are represented in Figure \ref{ImageEllipse}.

%, then we will prove that $\tilde{v_1}(\lambda,\alpha)$ is the support function of those ellipses.

%\begin{definition}
%  \label{DefEllipse}
%  Let $\lambda \in [0,1]$. We denote $\Ellipse{\lambda}$ the ellipse of focus $ 0, \lambda_n$, and minor axis length $\sqrt{\lambda_n(1-\lambda_n)} $.
%\end{definition}

\begin{remarque}
  \label{RqEqEllipse}
  Other descriptions for $\Ellipse{\lambda}$ are possible. The Cartesian equation of the boundary of $\Ellipse{\lambda}$ is given by:
  \[
  \frac{(x_\lambda-\frac{\lambda}{2})^2 }{\frac{\lambda}{4}}
  + \frac{y_\lambda^2}{\frac{\lambda(1-\lambda)}{4}} = 1,
  \]
  while the parametric equation of the boundary of $\Ellipse{\lambda}$ is given by:
  
$$
  x_\lambda(t) 
    = \frac{\sqrt{\lambda}}{2}\cos(t) + \frac{\lambda}{2} , \quad
  y_\lambda(t) 
    = \frac{\sqrt{\lambda(1-\lambda)}}{2}\sin(t) .$$
%      \begin{align*}
%  x_\lambda(t) 
%    &= \frac{\sqrt{\lambda}}{2}\cos(t) + \frac{\lambda}{2} , \\
%  y_\lambda(t) 
%    &= \frac{\sqrt{\lambda(1-\lambda)}}{2}\sin(t) . \\
%  \end{align*} 
\end{remarque}

\begin{figure}
\begin{center}
\includegraphics[width=0.5\textwidth ]{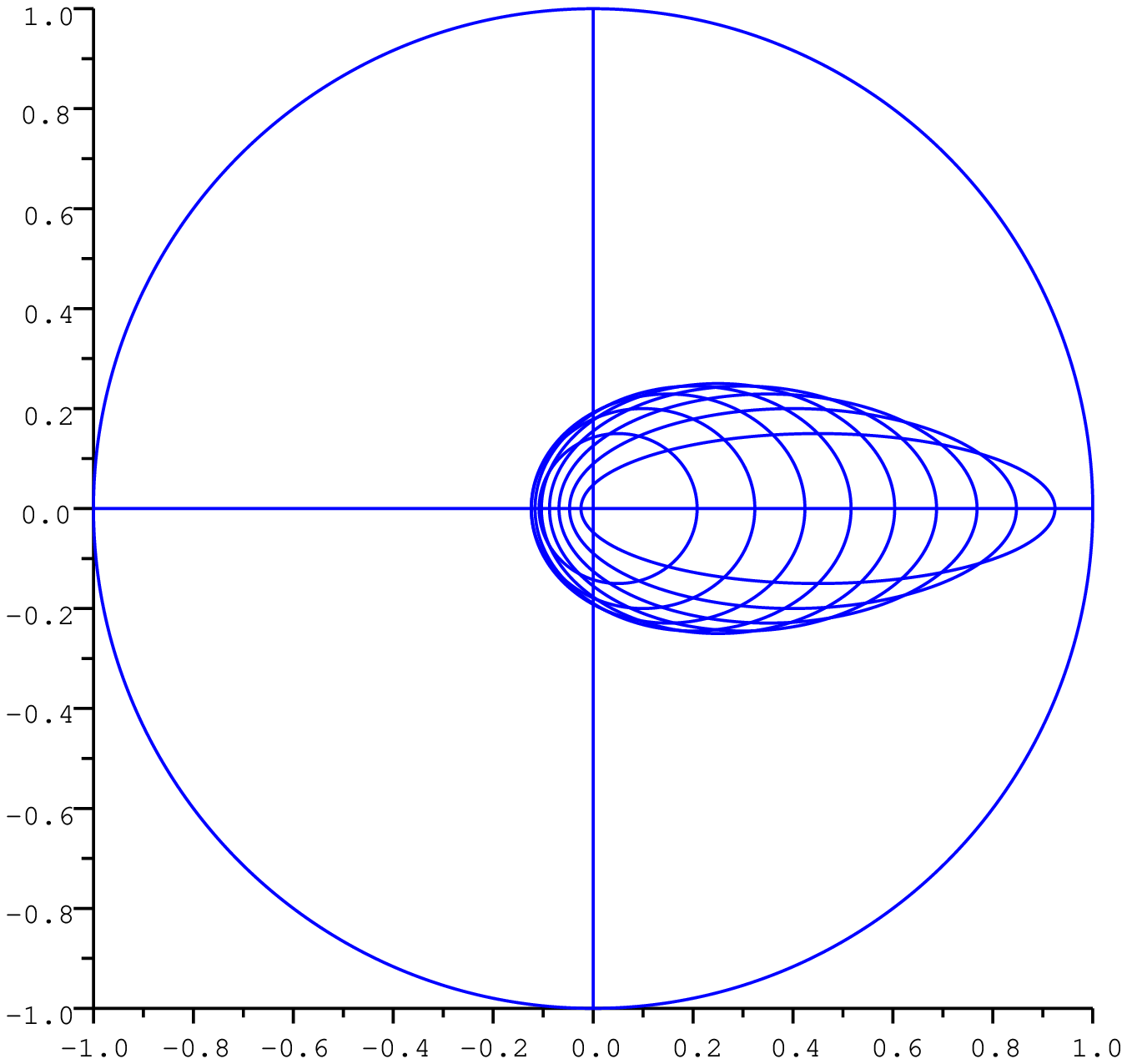}
\end{center}
\caption{Ellipse $\Ellipse{\lambda}$ for $\lambda=0.1, 0.2, \dots, 0.9$} 
\label{ImageEllipse}
\end{figure}

\begin{lemme}
  Let $\lambda \in [0,1]$. The support function of the ellipse $\Ellipse{\lambda}$ is:
  \[
  \FctionSupp{\Ellipse{\lambda}}{\alpha} = \frac{1}{2}(\cos(\alpha) \lambda + \sqrt{\lambda(1-\sin(\alpha)^2\lambda)}).
  \]
\end{lemme}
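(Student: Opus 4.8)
The plan is to compute the support function directly from the parametric description of the boundary of $\Ellipse{\lambda}$ recorded in Remark \ref{RqEqEllipse}. Since $\Ellipse{\lambda}$ is a compact convex set and, for each fixed $\alpha$, the map $z \mapsto \Reel{z\exp(-\I\alpha)}$ is a real-linear functional, its supremum over $\Ellipse{\lambda}$ is attained on the boundary. Hence I would replace the supremum defining $\FctionSupp{\Ellipse{\lambda}}{\alpha}$ by the supremum over the parametrized boundary curve $t \mapsto (x_\lambda(t), y_\lambda(t))$, and then maximize a single function of $t$.

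First I would use, for $z = x + \I y$, the elementary identity $\Reel{z\exp(-\I\alpha)} = x\cos(\alpha) + y\sin(\alpha)$. Substituting $x_\lambda(t) = \frac{\sqrt{\lambda}}{2}\cos(t) + \frac{\lambda}{2}$ and $y_\lambda(t) = \frac{\sqrt{\lambda(1-\lambda)}}{2}\sin(t)$ produces a function of the form
\[
f(t) = \frac{\lambda}{2}\cos(\alpha) + A\cos(t) + B\sin(t),
\]
with $A = \frac{\sqrt{\lambda}}{2}\cos(\alpha)$ and $B = \frac{\sqrt{\lambda(1-\lambda)}}{2}\sin(\alpha)$. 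The constant term $\frac{\lambda}{2}\cos(\alpha)$ pulls out of the supremum, and the standard fact that $\sup_t(A\cos t + B\sin t) = \sqrt{A^2+B^2}$ gives
\[
\FctionSupp{\Ellipse{\lambda}}{\alpha} = \frac{\lambda}{2}\cos(\alpha) + \sqrt{A^2 + B^2}.
\]

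The only step that needs a little care is the simplification of $A^2 + B^2$, and this is really the crux of the matter rather than an obstacle. Using $\cos^2(\alpha) + \sin^2(\alpha) = 1$, I would compute
\[
A^2 + B^2 = \frac{\lambda}{4}\cos^2(\alpha) + \frac{\lambda(1-\lambda)}{4}\sin^2(\alpha) = \frac{\lambda}{4}\bigl(1 - \lambda\sin^2(\alpha)\bigr),
\]
so that $\sqrt{A^2+B^2} = \frac{1}{2}\sqrt{\lambda(1 - \sin(\alpha)^2\lambda)}$, and the announced formula follows at once. No genuine difficulty arises: the argument is just the maximization of a sinusoid combined with the Pythagorean identity. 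As a bonus, this computation makes transparent why these ellipses are the right building blocks, since comparing with Lemma \ref{RqriTilde} shows that $\FctionSupp{\NumRan{P_2P_1}}{\alpha} = \sup_{\lambda \in \Spec{P_2P_1}} \FctionSupp{\Ellipse{\lambda}}{\alpha}$.
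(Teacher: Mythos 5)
Your proof is correct, and while it starts from the same place as the paper's (the parametrization of the boundary from Remark \ref{RqEqEllipse} and the reduction to maximizing $g(t) = \frac{\lambda}{2}\cos(\alpha) + A\cos(t) + B\sin(t)$ over $t$), you execute the maximization in a genuinely different and more elementary way. The paper differentiates $g$, solves $\tan(t_0) = \sqrt{1-\lambda}\tan(\alpha)$ for the critical points, evaluates $\cos(t_i)$ and $\sin(t_i)$ via trigonometric identities while tracking signs through auxiliary parameters $\epsilon_i$ and $\epsilon_\alpha = \cos(\alpha)/\abs{\cos(\alpha)}$, and must treat $\cos(\alpha) = 0$ as a separate case since the tangent equation degenerates there. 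You bypass all of this with the amplitude identity $\sup_t \left( A\cos(t) + B\sin(t) \right) = \sqrt{A^2 + B^2}$, after which the Pythagorean simplification $A^2 + B^2 = \frac{\lambda}{4}\left(1 - \lambda\sin^2(\alpha)\right)$ is immediate; your computation is uniform in $\alpha$ (including the degenerate endpoints $\lambda = 0$ and $\lambda = 1$, where the parametrization still traces the boundary), requires no case split and no sign bookkeeping, and you correctly justify the preliminary step that the supremum over the convex domain is attained on the boundary. What the paper's longer route buys is an explicit maximizing point $t_0$, but since that location is never used elsewhere, your argument is a clean simplification with no loss.
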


\begin{proof}

  Let $\lambda \in [0,1]$. The support function of $\Ellipse{\lambda}$ relative to the point $0$ is given by 
  $ 
  \FctionSupp{\Ellipse{\lambda}}{\alpha}
  = \sup_{t \in \R} x_\lambda(t)\cos(\alpha) + y_\lambda(t)\sin(\alpha)
  $,
where $x_\lambda(t)$ and $y_\lambda(t)$ are the parametrization of the boundary of $\Ellipse{\lambda}$.
Let $g=g_{\lambda,\alpha}$ be the function defined by the following formula:
  \begin{align*}
  g_{\lambda,\alpha}(t)
   &= x_\lambda(t)\cos(\alpha) + y_\lambda(t)\sin(\alpha) \\
   &= \frac{\lambda}{2}\cos(\alpha) + \frac{\sqrt{\lambda}}{2}\cos(\alpha)\cos(t) 
      + \frac{\sqrt{\lambda(1-\lambda)}}{2}\sin(\alpha)\sin(t).
  \end{align*}
In order to compute $\FctionSupp{\Ellipse{\lambda}}{\alpha}$ we only need to study this function for $\alpha \in [0,\pi]$ because $\Ellipse{\lambda}$ has $y=0$ as a symmetry axis.

  Suppose that $ \cos(\alpha) \ne 0 $.
  %and denote the derivative of $g$ by $g'$. 
  We have $g_{\lambda,\alpha}'(t_0)=0 $ if and only if $\tan(t_0) = \sqrt{1-\lambda} \tan(\alpha)$.  So the critical points of $g_{\lambda,\alpha}$ are $t_0 = \arctan(\sqrt{1-\lambda}\tan(\alpha))$ and $t_1 = \arctan(\sqrt{1-\lambda}\tan(\alpha))+ \pi$. We denote $\epsilon_0=1, \epsilon_1=-1$. Using standard trigonometric identities, we get
%  
%  that $\cos(x+\pi)=-\cos(x)$, $\sin(x+\pi)=-\sin(x)$, $\cos(\arctan(x))=\frac{1}{\sqrt{1+x^2}}$ and $\sin(\arctan(x))=\frac{x}{\sqrt{1+x^2}}$, 
  
 \[
   \cos(t_i) = \epsilon_i \frac{1}{\sqrt{1+(1-\lambda)\tan(\alpha)^2}},
   \sin(t_i) = \epsilon_i \frac{\sqrt{1-\lambda}\tan(\alpha)}{\sqrt{1+(1-\lambda)\tan(\alpha)^2}}.
 \]
 We denote $\epsilon_\alpha=\frac{\cos(\alpha)}{\abs{\cos(\alpha)}}$. Using again some trigonometry formulas, we have:
\begin{align*}
  2g_{\lambda,\alpha}(t_i)
   =& \lambda\cos(\alpha) + \epsilon_i\sqrt{\lambda}\cos(\alpha)\frac{1}{\sqrt{1+(1-\lambda)\tan(\alpha)^2}} \\
    &+ \epsilon_i\sqrt{\lambda(1-\lambda)}\sin(\alpha)\frac{\sqrt{1-\lambda}\tan(\alpha)}{\sqrt{1+(1-\lambda)\tan(\alpha)^2}} \\
%   =& \lambda\cos(\alpha) + \epsilon_0\sqrt{\lambda}\cos(\alpha) \frac{\epsilon_\alpha \cos(\alpha)}{\sqrt{\cos(\alpha)^2+(1-\lambda)\sin(\alpha)^2}} \\
%    &+ \epsilon_0\sqrt{\lambda(1-\lambda)}\sin(\alpha) \frac{\epsilon_\alpha \sqrt{1-\lambda}\sin(\alpha)}{\sqrt{\cos(\alpha)^2+(1-\lambda)\sin(\alpha)^2}} \\
%   =&  \lambda\cos(\alpha) + \frac{\epsilon_0\epsilon_\alpha \sqrt{\lambda}}{\sqrt{\cos(\alpha)^2+(1-\lambda)\sin(\alpha)^2}}(\cos(\alpha)^2+(1-\lambda)\sin(\alpha)^2) \\
%   =&  \lambda\cos(\alpha) + \epsilon_0\epsilon_\alpha \sqrt{\lambda}\sqrt{\cos(\alpha)^2+(1-\lambda)\sin(\alpha)^2} \\
   =&  \lambda\cos(\alpha) + \epsilon_i\epsilon_\alpha \sqrt{\lambda}\sqrt{1-\lambda\sin(\alpha)^2}.
\end{align*}
We finally obtain that
  \[
  \FctionSupp{\Ellipse{\lambda}}{\alpha} 
  = \frac{1}{2}\left(\lambda\cos(\alpha) + \sqrt{\lambda}\sqrt{1-\lambda\sin(\alpha)^2}\right).
  \]
  
  Suppose now that $\cos(\alpha) = 0$. Then $  g_{\lambda,\alpha}(t) = \frac{\sqrt{\lambda(1-\lambda)}}{2}\sin(t) $. So we get that in all situations
 $
 \FctionSupp{\Ellipse{\lambda}}{\alpha} = \frac{\sqrt{\lambda(1-\lambda)}}{2}
 $.
 We obtain  $
  \FctionSupp{\Ellipse{\lambda}}{\alpha} =  \frac{1}{2}(\cos(\alpha)\lambda + \sqrt{\lambda(1-\sin(\alpha)^2\lambda)})
  $ for every $\alpha$.
\end{proof}

Now we can easily prove Theorem \ref{CoroWP2P1} in the "generic position" case. 
\begin{theoreme}
  \label{ThWP2P1PosGen}
  If $(N_1,N_2)$ are in generic position, then:
  \[
  \adherence{\NumRan{P_2P_1}}= \adherence{\Con{}{\cup_{\lambda \in \Spec{P_2P_1}}\Ellipse{\lambda}}}.
  \]
\end{theoreme}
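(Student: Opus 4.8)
The plan is to match support functions. By the lemma just proved, the support function of the ellipse $\Ellipse{\lambda}$ is exactly $\frac{1}{2}(\cos(\alpha)\lambda + \sqrt{\lambda(1-\sin(\alpha)^2\lambda)})$, which is precisely the expression appearing under the supremum in Lemma \ref{RqriTilde}. Hence the first step is simply to read off the identity
\[
\FctionSupp{\NumRan{P_2P_1}}{\alpha} = \sup_{\lambda \in \Spec{P_2P_1}} \FctionSupp{\Ellipse{\lambda}}{\alpha},
\]
valid for every $\alpha$.

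The second step is to recognize the right-hand side as the support function of the convex hull of the union of the ellipses. I would establish the infinite-family analogue of Lemma \ref{LemFctionSuppFusion2Ens}, namely that for any uniformly bounded family of convex sets $\lbrace \mathscr{S}_\lambda \rbrace$ one has $\FctionSupp{\Con{}{\cup_\lambda \mathscr{S}_\lambda}}{\alpha} = \sup_\lambda \FctionSupp{\mathscr{S}_\lambda}{\alpha}$. The inequality $\ge$ is immediate since each $\mathscr{S}_\lambda \subset \Con{}{\cup_\lambda \mathscr{S}_\lambda}$; for the reverse, any point $z = \sum_n x_n \varepsilon_n$ of the convex hull satisfies $\Reel{z\exp(-\I\alpha)} = \sum_n x_n \Reel{\varepsilon_n \exp(-\I\alpha)} \le \sup_\lambda \FctionSupp{\mathscr{S}_\lambda}{\alpha}$, because each $\varepsilon_n$ lies in some $\mathscr{S}_\lambda$ and $\sum_n x_n = 1$. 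Boundedness holds here since every $\Ellipse{\lambda}$ with $\lambda \in [0,1]$ lies in the closed unit disc, and $\Spec{P_2P_1} \subset [0,1]$ as recorded in the proof of Lemma \ref{LCosSp}.

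Combining the two steps yields $\FctionSupp{\NumRan{P_2P_1}}{\alpha} = \FctionSupp{\Con{}{\cup_{\lambda \in \Spec{P_2P_1}} \Ellipse{\lambda}}}{\alpha}$ for all $\alpha$. Both $\NumRan{P_2P_1}$ (convex by the Toeplitz-Hausdorff theorem) and the convex hull are bounded convex sets, so the Proposition asserting that the support function characterizes the closure of a convex set forces their closures to coincide, which is exactly the claimed equality $\adherence{\NumRan{P_2P_1}} = \adherence{\Con{}{\cup_{\lambda \in \Spec{P_2P_1}} \Ellipse{\lambda}}}$. The only genuine content beyond the preceding lemmas is the passage from the two-set merging Lemma \ref{LemFctionSuppFusion2Ens} to an arbitrary family, together with the bookkeeping needed to guarantee boundedness so that all support functions are finite and the characterization proposition applies; I expect this to be the main, though entirely routine, obstacle.
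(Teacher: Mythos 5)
Your proposal is correct and follows essentially the same route as the paper: equate the support function formula of Lemma \ref{RqriTilde} with the ellipse support functions, then conclude via the fact that support functions characterize closures of bounded convex sets together with Lemma \ref{LemFctionSuppFusion2Ens}. Your only addition is to spell out the infinite-family version of Lemma \ref{LemFctionSuppFusion2Ens} via the countable convex combination argument, a detail the paper leaves implicit when it invokes the two-set lemma for a family indexed by the spectrum.
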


\begin{proof}
  We first notice that:
  $$
    \FctionSupp{\NumRan{P_2P_1}}{\alpha} 
    = \sup_{\lambda \in \Spec{P_2P_1}}  \frac{1}{2}(\cos(\alpha)\lambda \pm \sqrt{\lambda(1-\sin(\alpha)^2\lambda)}) 
    = \sup_{\lambda \in \Spec{P_2P_1}} \FctionSupp{\Ellipse{\lambda}}{\alpha}
  .$$
  As the support function characterizes the closure of a convex bounded set, we simply use Lemma \ref{LemFctionSuppFusion2Ens} to conclude.
\end{proof}

  The proof of the general case follows now by combining the previous theorem with the decomposition (\ref{EqDecompoPosGen}). 
  
%  The proof of the general case is to ``fit'' the previous theorem with the decomposition (\ref{EqDecompoPosGen}).

%  uses the previous theorem and Halmos' decomposition theorem.

%  , we may have to ``add in direct sum $0$ and $I$'' to the generic position part of $P_{M_2}P_{M_1}$ (see the decomposition (\ref{EqDecompoPosGen}) of H). The proof of Theorem \ref{CoroWP2P1} will consist to check that this is ``compatible'' with Theorem \ref{ThWP2P1PosGen}.

\begin{proof}[Proof of Theorem \ref{CoroWP2P1}]
  \label{RqPasPosGen}
  Recall that $M_1\ne H$ or $M_2 \ne H$. We use the notation of the orthogonal decomposition (\ref{EqDecompoPosGen}) of $H$. Suppose that $\tilde{H} =\lbrace 0 \rbrace$. Then $P_{M_2}P_{M_1}$ is the direct sum of $0$ and $I$ (or is zero if $M_1 \cap M_2 = \lbrace 0 \rbrace $). Then it is easy to see that $\Ellipse{0} = \lbrace 0 \rbrace$ and $\Ellipse{1} = [0,1]$. So we have $\NumRan{P_{M_2}P_{M_1}}= [0,1] = \Con{}{\Ellipse{0} \cup \Ellipse{1}} $. When $M_1 \cap M_2 = \lbrace 0 \rbrace $, we have $\NumRan{P_{M_2}P_{M_1}}= \lbrace 0 \rbrace = \Ellipse{0} $.

  Suppose $\tilde{H} \ne \lbrace 0 \rbrace$, and $M_1^\perp \cap M_2^\perp \ne \lbrace 0 \rbrace $ (the cases $M_1^\perp \cap M_2 \ne \lbrace 0 \rbrace $ and $M_1\cap M_2^\perp \ne \lbrace 0 \rbrace $ are similar). On the space $M_1^\perp \cap M_2^\perp$, we have $P_{M_2}P_{M_1}=0$. The numerical range of $P_{M_2}P_{M_1}$ on $(M_1^\perp \cap M_2^\perp) \oplus \tilde{H}$ is
  $  
  \Con{}{\lbrace 0 \rbrace \cup \adherence{\Con{}{\cup_{\lambda \in \Spec{P_2P_1}}\Ellipse{\lambda}}}}
  $.
  As $ \Ellipse{0} = \lbrace 0 \rbrace \subset \Ellipse{\lambda} $ for all $\lambda \in [0,1]$, the numerical range of $P_{M_2}P_{M_1}$ on $(M_1^\perp \cap M_2^\perp) \oplus \tilde{H}$ is given by  
  $
  \adherence{\Con{}{\cup_{\lambda \in \Spec{P_2P_1}}\Ellipse{\lambda}}}
  $.
    
  Suppose $M_1 \cap M_2 \ne \lbrace 0 \rbrace $. As $P_{M_2}P_{M_1}=I$ on the intersection $M_1 \cap M_2$, the numerical range of $P_{M_2}P_{M_1}$ on $(M_1 \cap M_2) \oplus \tilde{H}$ is
  $ 
  \Con{}{\lbrace 1 \rbrace \cup \adherence{\Con{}{\cup_{\lambda \in \Spec{P_2P_1}}\Ellipse{\lambda}}}}
  $.
  For every $\lambda \in [0,1]$ we have $0 \in \Ellipse{\lambda}$. As $\tilde{H}\neq \lbrace 0 \rbrace$, the numerical range of $P_{M_2}P_{M_1}$ on $(M_1 \cap M_2) \oplus \tilde{H}$ is
  $ 
  \Con{}{[0,1] \cup \adherence{\Con{}{\cup_{\lambda \in \Spec{P_2P_1}} \Ellipse{\lambda}}}}
  $.
  But $ \Ellipse{1}=[0,1] $. So, finally, the numerical range of $P_{M_2}P_{M_1}$ on $(M_1 \cap M_2) \oplus \tilde{H}$ is 
  $
  \adherence{\Con{}{\cup_{\lambda \in \Spec{P_{M_2}P_{M_1}} }\Ellipse{\lambda}}}
  $.
  This proves the theorem.
\end{proof} 
  In the case when $P_{M_1} = I$ and $P_{M_2} = I$, we have of course that $\NumRan{P_{M_2}P_{M_1}}= \lbrace 1 \rbrace$. 

\begin{remarque}
  \label{RqWIdempotent}
  In \cite{Corach_Maestripieri_2011}, Corach and Maestripieri proved that the Moore-Penrose pseudoinverse of a product of two orthogonal projections is idempotent (possibly unbounded). Conversely, the Moore-Penrose pseudoinverse of an idempotent is a product of two orthogonal projections. It is well known that the numerical range of a (bounded) idempotent is an ellipse (see \cite{Simoncini_Szyld_2010}). By using Halmos' theorem in a similar way as before, it is possible to prove that the closure of the numerical range of an idempotent $E$ is the convex hull of the domains delimited by the ellipses $\mathcal{E}^+(\lambda)$ of foci $0,1$ and of minor axis length $\sqrt{\frac{1-\lambda}{\lambda}}$, for $ \lambda$ describing the spectrum $\Spec{E^+}$ of the Moore-Penrose pseudoinverse $E^+$ of $E$, i.e.:
$$
  \adherence{\NumRan{E}}= \Con{}{\cup_{\lambda \in \Spec{E^+}}\mathcal{E}^+(\lambda)}.
$$
  As $\mathcal{E}^+(\lambda_1) \subset \mathcal{E}^+(\lambda_2) $, if $\lambda_1 \le \lambda_2 $, the convex hull of all these ellipses will be just the biggest one, and we find another proof that $\NumRan{E}$ is an ellipse.
\end{remarque}

\subsection{$\NumRan{P_2P_1}$  when $P_1P_2P_1$ is diagonalizable}

Let $(N_1, N_2)$ be a pair of closed subspaces of $H$. Denote $P_i = P_{N_{i}}$.
Suppose that $(N_1, N_2) $ is in generic position. 
As we have seen in the proof of Theorem \ref{CoroWP2P1}, if we get $\NumRan{P_2P_1} $ when $(N_1, N_2) $ is in generic position, we can manage to get $\NumRan{P_2P_1} $ in the general case.
%This simplify proofs. 
%The same proof as before will allow us to deduce the general case from this particular one. 
%Then be generalized to the general case, with the same proof as before. First we define what we mean by diagonalizable.

In this section we always assume for simplification that $H$ is separable and make the hypothesis that $P_1P_2P_1$ is diagonalizable, according to the following definition.
\begin{definition}
  \label{DefDiag}
We say that $P_1P_2P_1$ is \emph{diagonalizable} if there exists an orthonormal basis 
$\left( \tilde{h}_n \right)_{n\in \N}$ of  $H$ and a sequence of scalars $\left(\tilde{\lambda}_n\right)_{n \in \N}$ such that:
  \[
  P_1P_2P_1x = \sum_{n\in \N} \tilde{\lambda}_n \ps{x}{\tilde{h}_n}\tilde{h}_n  \quad (x \in H).
  \]
\end{definition}
This happens for instance when $P_2P_1$ is a compact operator. Using our diagonalizability assumption, it will be possible to decompose $P_2P_1$ as a direct sum of $2\times 2$ matrices. As we know that the numerical range of such a matrix is an ellipse, this will permit to deduce the numerical range of $P_2P_1$. 
%The goal now is to say that when $P_1P_2P_1$ is diagonalizable,  we can decompose $P_2P_1$ as a direct sum of 2x2 matrices. As we know that the numerical range of a 2x2 matrix is an ellipse, then we would deduce the numerical range of $P_2P_1$. 
  We first notice that $0 \le P_1P_2P_1 \le I$. Therefore $ 0\le \tilde{\lambda}_n \le 1$. The next lemma characterizes when $ \tilde{h}_n \in N_1 $.

\begin{lemme}
  Suppose that $(N_1,N_2)$ is in generic position. We have:
  \begin{enumerate}
	\item $ \tilde{h}_n \in N_1 \Leftrightarrow \tilde{\lambda}_n \neq 0 $
	\item $ \tilde{h}_n \in N_1^\perp \Leftrightarrow \tilde{\lambda}_n = 0 $.
\end{enumerate}
\end{lemme}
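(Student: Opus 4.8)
The plan is to use that each $\tilde{h}_n$ is an eigenvector of the self-adjoint operator $P_1P_2P_1$, i.e. $P_1P_2P_1\tilde{h}_n = \tilde{\lambda}_n \tilde{h}_n$, together with two structural facts: that $P_1P_2P_1$ maps $H$ into $N_1$ (the leftmost factor is $P_1$), and that generic position supplies $N_1 \cap N_2^\perp = \lbrace 0 \rbrace$. I would first clear away the three easy implications, then isolate a single nontrivial step from which the last one follows by contraposition.

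First I would establish the direction $\tilde{\lambda}_n \neq 0 \Rightarrow \tilde{h}_n \in N_1$ of assertion (1): since the range of $P_1P_2P_1$ is contained in $N_1$ and $\tilde{h}_n = \tilde{\lambda}_n^{-1} P_1P_2P_1\tilde{h}_n$ when $\tilde{\lambda}_n \neq 0$, the vector $\tilde{h}_n$ lies in $N_1$. The direction $\tilde{h}_n \in N_1^\perp \Rightarrow \tilde{\lambda}_n = 0$ of assertion (2) is then immediate: $\tilde{h}_n \in N_1^\perp$ forces $P_1\tilde{h}_n = 0$, whence $\tilde{\lambda}_n \tilde{h}_n = P_1P_2P_1\tilde{h}_n = 0$ and therefore $\tilde{\lambda}_n = 0$ because $\norme{\tilde{h}_n} = 1$.

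The heart of the argument is the converse $\tilde{\lambda}_n = 0 \Rightarrow \tilde{h}_n \in N_1^\perp$ of assertion (2). Assuming $\tilde{\lambda}_n = 0$, I would test the eigenvalue relation against $\tilde{h}_n$ and use that $P_2$ is an orthogonal projection, so that $\ps{P_2 u}{u} = \norme{P_2 u}^2$ for every $u$, to obtain
\[
0 = \tilde{\lambda}_n = \ps{P_1P_2P_1\tilde{h}_n}{\tilde{h}_n} = \ps{P_2 P_1\tilde{h}_n}{P_1\tilde{h}_n} = \norme{P_2 P_1\tilde{h}_n}^2,
\]
hence $P_2P_1\tilde{h}_n = 0$. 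Thus $P_1\tilde{h}_n$ lies in $\ker P_2 = N_2^\perp$ and simultaneously in $N_1$; generic position then gives $P_1\tilde{h}_n \in N_1 \cap N_2^\perp = \lbrace 0 \rbrace$, i.e. $\tilde{h}_n \in N_1^\perp$. This is the step where the generic-position hypothesis is genuinely needed, and I expect it to be the main (indeed the only) obstacle.

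Finally, the remaining direction $\tilde{h}_n \in N_1 \Rightarrow \tilde{\lambda}_n \neq 0$ of assertion (1) follows from the step just proved by contraposition: if $\tilde{\lambda}_n = 0$ then $\tilde{h}_n \in N_1^\perp$, and since $\tilde{h}_n$ is a unit vector it cannot also lie in $N_1$. Collecting these four implications yields both equivalences, completing the proof.
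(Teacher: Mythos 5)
Your proof is correct and takes essentially the same route as the paper: the only nontrivial direction is $\tilde{\lambda}_n = 0 \Rightarrow \tilde{h}_n \in N_1^\perp$, obtained by first showing $P_2P_1\tilde{h}_n = 0$ and then using generic position via $P_1\tilde{h}_n \in N_1 \cap N_2^\perp = \lbrace 0 \rbrace$, exactly as in the paper. Your one (harmless, in fact slightly cleaner) deviation is deducing $P_2P_1\tilde{h}_n = 0$ from the identity $\ps{P_1P_2P_1\tilde{h}_n}{\tilde{h}_n} = \norme{P_2P_1\tilde{h}_n}^2$, which holds for arbitrary orthogonal projections, whereas the paper invokes generic position a second time through $P_2P_1\tilde{h}_n \in N_1^\perp \cap N_2 = \lbrace 0 \rbrace$.
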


\begin{proof}
  We know that $ P_1P_2P_1 \tilde{h}_n = \tilde{\lambda}_n\tilde{h}_n $. If $\tilde{\lambda}_n \neq 0$, then $ \tilde{h}_n = \frac{1}{\tilde{\lambda}_n}P_1P_2P_1\tilde{h}_n \in N_1 $.  
  If $\tilde{\lambda}_n = 0$, then  $P_1P_2P_1\tilde{h}_n=0$. So $ P_2P_1\tilde{h}_n \in N_1^\perp \cap N_2 =\lbrace 0 \rbrace $, because we are in generic position. So $P_2P_1\tilde{h}_n=0$. We get $ P_1\tilde{h}_n \in N_2^\perp \cap N_1 =\lbrace 0 \rbrace $, $P_1\tilde{h}_n=0$ and thus $\tilde{h}_n \in N_1^\perp$.
\end{proof}

  From now on, we just need those vectors $\tilde{h}_n$ which are in $N_1$.  For simplification, we denote these vectors as $(h_n)_{n\in \N}$, each one correspond to a nonzero $ \lambda_n$. This means that $ P_1P_2P_1 h_n = \lambda_n h_n $. As we have $h_n\in N_1$, we get $P_1 h_n = h_n$.
  We denote (see Figure \ref{FigDiag}) 
  $$ w_n = P_2 h_n  ,  \tilde{w}_n=\frac{w_n}{\norme{w_n}}, f_n = (I-P_1)P_2 h_n ,  \tilde{f}_n=\frac{f_n}{\norme{f_n}} .$$
  
\begin{figure}  
\begin{center}
\begin{picture}(100.055,100)(0,0)
\put(0,0){\vector(1,0){100}}
\put(0,0){\vector(0,1){100}}
\put(0,0){\vector(1,1){70.710678118654752440084436210485}}
\put(0,0){\vector(0,1){50}}
\put(0,0){\vector(1,1){50}}

\multiput(0,50)(20,0){3}{\line(1,0){10}}
\multiput(100,0)(-20,20){3}{\line(-1,1){10}}

\put(95,-10){$h_n$}
\put(-15,95){$\tilde{f}_n$}
\put(-15,45){$f_n$}
\put(80,80){$\tilde{w}_n$}
\put(35,55){$w_n$}

\qbezier(13,13)(17.5,8)(20,0)
\put(20,10){$\theta_n$}
\end{picture}
\end{center}
\caption{\label{FigDiag} }
\end{figure} 
  
\begin{lemme}
  \label{LemPs}
%  With the same assumptions as before,
  We have $ \ps{w_n}{w_k}= \Kro{n}{k}\lambda_n $ and $ \ps{w_n}{h_k}= \Kro{n}{k}\lambda_n $, where $\Kro{n}{k} $ is the Kronecker symbol, whose value is 1 if $n=k$, and $0$ otherwise.
\end{lemme}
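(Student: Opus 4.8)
The plan is to compute both inner products directly by reducing them to the eigenvalue relation $P_1P_2P_1 h_n = \lambda_n h_n$, exploiting repeatedly that $P_1$ and $P_2$ are orthogonal projections (self-adjoint idempotents) and that the retained vectors $(h_n)$ form an orthonormal family lying in $N_1$ (by the preceding lemma, which guarantees $h_n \in N_1$ precisely for the nonzero eigenvalues $\lambda_n$). In particular I will freely use $P_1 h_n = h_n$, $P_i = P_i^* = P_i^2$, and $\ps{h_n}{h_k} = \Kro{n}{k}$.

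First I would handle $\ps{w_n}{h_k}$. Since $h_k \in N_1$ we have $h_k = P_1 h_k$, and because $P_1 = P_1^*$ this yields
$\ps{w_n}{h_k} = \ps{P_2 h_n}{P_1 h_k} = \ps{P_1 P_2 h_n}{h_k}$.
Now inserting the identity $h_n = P_1 h_n$ inside the first slot gives $P_1 P_2 h_n = P_1 P_2 P_1 h_n = \lambda_n h_n$, so that $\ps{w_n}{h_k} = \lambda_n \ps{h_n}{h_k} = \Kro{n}{k}\lambda_n$ by orthonormality. For the second identity I would use that $P_2$ is a self-adjoint idempotent to collapse one projection: $\ps{w_n}{w_k} = \ps{P_2 h_n}{P_2 h_k} = \ps{P_2^* P_2 h_n}{h_k} = \ps{P_2 h_n}{h_k} = \ps{w_n}{h_k}$. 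This reduces the computation of $\ps{w_n}{w_k}$ to the one just done, giving $\ps{w_n}{w_k} = \Kro{n}{k}\lambda_n$ as well, which is exactly why the two formulas coincide and can be stated together.

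I do not expect any genuine obstacle here; the whole argument is elementary bookkeeping with projection identities, and it is the natural first computation needed to set up the $2\times 2$ block decomposition of $P_2P_1$ announced just before the lemma. The only point deserving care is ensuring that each invocation of $P_1 h_n = h_n$ is legitimate, i.e. that the vectors involved genuinely lie in $N_1$; this is exactly what the preceding lemma secures. Note also that the generic position hypothesis is used only indirectly, through that lemma, and plays no further role in the computation itself.
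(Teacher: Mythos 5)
Your proof is correct and follows essentially the same route as the paper: both arguments reduce each inner product to the eigenvalue relation $P_1P_2P_1h_n=\lambda_n h_n$ using self-adjointness of the projections and orthonormality of $(h_n)$. The only cosmetic difference is that you collapse $\ps{w_n}{w_k}$ to $\ps{w_n}{h_k}$ via $P_2^*P_2=P_2$, whereas the paper computes both quantities independently through $P_1P_2P_1$ --- the same identities in a slightly different order.
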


\begin{proof}
  For the first equality, we have that
  $
    \ps{w_n}{w_k}
      = \ps{P_2P_1h_n}{P_2P_1h_k} 
      = \ps{P_1P_2P_1h_n}{h_k} 
      = \lambda_n \ps{h_n}{h_k} 
      = \Kro{n}{k}\lambda_n
  $.
  For the other one, we have
  $
    \ps{w_n}{h_k}
      = \ps{P_2P_1h_n}{P_1h_k} 
      = \ps{P_1P_2P_1h_n}{h_k} 
      = \lambda_n \ps{h_n}{h_k} 
      = \Kro{n}{k}\lambda_n
  $.  
\end{proof}

\begin{corollaire}
Let $ \Sspan{h,w} $ be the closed subspace of $H$ generated by $h$ and $w$. If $n \neq k $, then $ \Sspan{h_n,w_n}$  is orthogonal to $ \Sspan{h_k,w_k}$.
\end{corollaire}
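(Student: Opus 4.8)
The plan is to verify that the four spanning vectors $h_n, w_n, h_k, w_k$ are pairwise orthogonal whenever $n \neq k$, and then to promote this to orthogonality of the closed linear spans by sesquilinearity and continuity of the inner product. Everything should reduce to a short bookkeeping of inner products using the data already assembled.

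First I would record the inner products among all four generators. Since the $h_n$ are eigenvectors of $P_1P_2P_1$ extracted from an orthonormal basis, they form an orthonormal family, so $\ps{h_n}{h_k} = \Kro{n}{k}$. Lemma \ref{LemPs} directly supplies $\ps{w_n}{w_k} = \Kro{n}{k}\lambda_n$ and $\ps{w_n}{h_k} = \Kro{n}{k}\lambda_n$. The only pairing not literally stated there is $\ps{h_n}{w_k}$, which I would obtain from conjugate symmetry as $\ps{h_n}{w_k} = \conj{\ps{w_k}{h_n}} = \conj{\Kro{k}{n}\lambda_k}$; since the eigenvalues $\lambda_k$ are real (indeed in $[0,1]$, as $0 \le P_1P_2P_1 \le I$), this equals $\Kro{n}{k}\lambda_k$.

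With these four identities in hand, fixing $n \neq k$ makes every Kronecker symbol vanish, so $\ps{h_n}{h_k} = \ps{h_n}{w_k} = \ps{w_n}{h_k} = \ps{w_n}{w_k} = 0$; that is, each of $h_n, w_n$ is orthogonal to each of $h_k, w_k$. To finish I would observe that any element of $\Sspan{h_n,w_n}$ has the form $a h_n + b w_n$ and any element of $\Sspan{h_k,w_k}$ the form $c h_k + d w_k$, and expanding $\ps{a h_n + b w_n}{c h_k + d w_k}$ by sesquilinearity yields a linear combination of the four inner products above, each of which is zero; continuity of the inner product then extends this orthogonality from the algebraic spans to their closures. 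There is no genuine obstacle here: the statement is an immediate consequence of Lemma \ref{LemPs} together with the orthonormality of the $h_n$, and the only point requiring a word of care is the cross term $\ps{h_n}{w_k}$, which is handled by conjugate symmetry and the reality of the eigenvalues.
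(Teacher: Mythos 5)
Your proof is correct and follows exactly the route the paper intends: the corollary is stated without proof as an immediate consequence of Lemma \ref{LemPs} together with the orthonormality of the $h_n$, which is precisely the bookkeeping you carry out. Your explicit handling of the cross term $\ps{h_n}{w_k}$ via conjugate symmetry and the reality of the $\lambda_k$, and the extension to closed spans by sesquilinearity and continuity, simply fill in the details the paper leaves implicit.
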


\begin{proposition}
The range of  $ \Sspan{h_n,w_n}$ by $P_2P_1$ verifies
  \[
  P_2P_1(\Sspan{h_n,w_n}) =  \Sspan{w_n} \subset \Sspan{h_n,w_n} .
  \]
\end{proposition}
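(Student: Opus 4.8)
The plan is to compute the action of $P_2P_1$ on each of the two generators $h_n$ and $w_n$ of $\Sspan{h_n,w_n}$ and then to combine the results by linearity. The key inputs are already available: the eigenvalue relation $P_1P_2P_1h_n=\lambda_nh_n$ with $\lambda_n\neq 0$, the fact that $h_n\in N_1$ gives $P_1h_n=h_n$, the definition $w_n=P_2h_n$, and Lemma \ref{LemPs} which yields $\ps{w_n}{w_n}=\lambda_n$.

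First I would treat the generator $h_n$. Since $P_1h_n=h_n$, one immediately gets $P_2P_1h_n=P_2h_n=w_n$. Next I would treat $w_n$. Writing $w_n=P_2h_n$ and using $P_1h_n=h_n$, I compute $P_1w_n=P_1P_2h_n=P_1P_2P_1h_n=\lambda_nh_n$, and hence $P_2P_1w_n=\lambda_nP_2h_n=\lambda_nw_n$. So $w_n$ is an eigenvector of $P_2P_1$ for the eigenvalue $\lambda_n$, while $h_n$ is mapped to $w_n$.

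From these two computations the conclusion follows by linearity: an arbitrary element $ah_n+bw_n$ of $\Sspan{h_n,w_n}$ is sent to $aw_n+b\lambda_nw_n=(a+b\lambda_n)w_n$. As $a,b$ range over all scalars, $a+b\lambda_n$ ranges over all scalars as well, so the image is exactly $\Sspan{w_n}$. Here I would note that $w_n\neq 0$ (because $\ps{w_n}{w_n}=\lambda_n\neq 0$ by Lemma \ref{LemPs}), so $\Sspan{w_n}$ is genuinely one-dimensional and the image is not degenerate. The inclusion $\Sspan{w_n}\subset\Sspan{h_n,w_n}$ is then immediate.

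There is essentially no serious obstacle here: the whole argument reduces to two short computations using idempotence of $P_1$ and the eigenvalue equation. The only point requiring a word of care is checking that the image is \emph{exactly} $\Sspan{w_n}$ rather than a proper subspace of it, which is precisely where the hypothesis $\lambda_n\neq 0$ (guaranteeing surjectivity onto $\Sspan{w_n}$ and $w_n\neq 0$) is used.
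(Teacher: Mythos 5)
Your proof is correct and takes essentially the same approach as the paper: compute $P_2P_1h_n=w_n$ and $P_2P_1w_n=\lambda_nw_n$ from the eigenvalue relation $P_1P_2P_1h_n=\lambda_nh_n$, then conclude by linearity. Your added check that $\lambda_n\neq 0$ forces $w_n\neq 0$ and that the image is exactly $\Sspan{w_n}$ is slightly more careful than the paper, which only verifies collinearity, but the substance is identical.
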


\begin{proof}
  %As $(h_n, w_n)$ is a basis of $\Span{h_n,w_n}$, w
  We just need to prove that $P_2P_1(h_n)$ and $P_2P_1(w_n)$ are collinear with $w_n$. We have $P_2P_1(h_n)=w_n$. As $h_n$ is an eigenvector of $P_1P_2P_1$, we obtain
$   P_2P_1(w_n) 
      = P_2P_1P_2P_1(h_n) 
      = P_2(\lambda_n h_n) 
      = \lambda_n w_n
$.
\end{proof}

\begin{lemme}
  We have $\Sspan{h_n,w_n} = \Sspan{h_n,f_n} $.
\end{lemme}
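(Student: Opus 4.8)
The plan is to exhibit $f_n$ and $w_n$ as explicit linear combinations of one another modulo $h_n$, so that the two spanning sets $\lbrace h_n, w_n \rbrace$ and $\lbrace h_n, f_n \rbrace$ generate the same subspace. The only non-trivial ingredient is the identification of $P_1 w_n$.

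First I would unfold the definition of $f_n$. By construction $f_n = (I-P_1)P_2 h_n = (I-P_1)w_n = w_n - P_1 w_n$, so the whole question reduces to computing $P_1 w_n$.

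Next I would use the eigenvector relation. Since $h_n \in N_1$ we have $P_1 h_n = h_n$, hence $w_n = P_2 h_n = P_2 P_1 h_n$ and therefore $P_1 w_n = P_1 P_2 P_1 h_n = \lambda_n h_n$, using that $h_n$ is an eigenvector of $P_1P_2P_1$ for the eigenvalue $\lambda_n$. Substituting this into the previous identity gives $f_n = w_n - \lambda_n h_n$, or equivalently $w_n = f_n + \lambda_n h_n$.

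Finally I would read off the two inclusions. The relation $f_n = w_n - \lambda_n h_n$ shows $f_n \in \Sspan{h_n,w_n}$, whence $\Sspan{h_n,f_n} \subset \Sspan{h_n,w_n}$; the relation $w_n = f_n + \lambda_n h_n$ gives the reverse inclusion, and the two subspaces coincide. I do not expect any genuine obstacle here beyond the bookkeeping identity $P_1 w_n = \lambda_n h_n$; the one point to be careful about is to invoke $P_1 h_n = h_n$ (legitimate because $h_n \in N_1$) \emph{before} applying the eigenvalue equation for $P_1 P_2 P_1$, since this is exactly what converts $P_1 P_2 h_n$ into $P_1 P_2 P_1 h_n = \lambda_n h_n$.
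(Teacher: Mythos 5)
Your proof is correct and follows essentially the same route as the paper: both hinge on the single identity $w_n = P_1P_2P_1 h_n + (I-P_1)P_2P_1 h_n = \lambda_n h_n + f_n$, which you obtain (legitimately) via $P_1 h_n = h_n$ and the eigenvalue equation. The only cosmetic difference is that you read off both inclusions directly from this identity, whereas the paper proves just one inclusion and invokes equality of dimensions for the other; your variant is, if anything, slightly more self-contained.
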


\begin{proof}
As both of them are subspaces of dimension $2$, it will be enough to show that $\Sspan{h_n,w_n} \subset \Sspan{h_n,f_n} $. As $h_n \in \Sspan{h_n,f_n} $, we just need to prove that $w_n \in  \Sspan{h_n,f_n}$. We have
$
    w_n
    = P_2P_1 h_n 
    = P_1P_2P_1 h_n + (I-P_1)P_2P_1 h_n 
    = \lambda_n h_n + f_n
$.
  So $w_n \in  \Sspan{h_n,f_n}$.
\end{proof}

\begin{corollaire}
If $n \ne k$, then $\Sspan{h_n,f_n}$ is orthogonal to $\Sspan{h_k,f_k}$. Moreover,
  \[
  P_2P_1(\Sspan{h_n,f_n}) =  \Sspan{w_n} \subset \Sspan{h_n,f_n}.
  \]
\end{corollaire}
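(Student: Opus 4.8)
The plan is to deduce both assertions directly from the results established just above, the only genuine ingredient being the span identity from the preceding lemma. First I would record that $\Sspan{h_n,f_n}=\Sspan{h_n,w_n}$ for every index $n$; with this in hand the corollary is essentially a transcription of the earlier statements into the $(h_n,f_n)$ description.

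For the orthogonality, I would substitute the identity on both sides: for $n\ne k$ the earlier corollary gives that $\Sspan{h_n,w_n}$ is orthogonal to $\Sspan{h_k,w_k}$, and replacing each $w$-span by the equal $f$-span yields orthogonality of $\Sspan{h_n,f_n}$ and $\Sspan{h_k,f_k}$. One could instead verify this by hand, using $f_n=w_n-\lambda_n h_n$ together with the relations of Lemma \ref{LemPs} and the orthonormality of the $(h_n)$, but passing through the already-proven corollary avoids recomputing any inner products. For the image statement I would likewise apply the preceding proposition through the span identity, writing $P_2P_1(\Sspan{h_n,f_n})=P_2P_1(\Sspan{h_n,w_n})=\Sspan{w_n}\subset\Sspan{h_n,w_n}=\Sspan{h_n,f_n}$.

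I expect no real obstacle here: the corollary is a bookkeeping consequence of the preceding results, and its role is simply to repackage the two-dimensional reducing subspaces in the coordinates $(h_n,f_n)$. The point of these coordinates is that they form an \emph{orthogonal} pair: since $h_n\in N_1$ we have $\ps{h_n}{f_n}=\ps{h_n}{(I-P_1)P_2h_n}=0$, whereas for $(h_n,w_n)$ one has $\ps{h_n}{w_n}=\lambda_n\ne 0$. This orthogonality, combined with the two displayed properties, is exactly what will allow $P_2P_1$ to be written as a direct sum of $2\times 2$ blocks over the spaces $\Sspan{h_n,f_n}$ (plus the kernel part), and hence its numerical range to be computed as a convex hull of the associated ellipses in the argument that follows.
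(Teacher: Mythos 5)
Your proof is correct and matches the paper's intent exactly: the paper states this corollary without proof, as an immediate consequence of the identity $\Sspan{h_n,w_n}=\Sspan{h_n,f_n}$ combined with the earlier corollary on orthogonality of the $\Sspan{h_n,w_n}$ and the proposition $P_2P_1(\Sspan{h_n,w_n})=\Sspan{w_n}$, which is precisely your substitution argument. Your added observation that $\ps{h_n}{f_n}=0$ (since $f_n\in N_1^\perp$, $h_n\in N_1$) correctly anticipates why these coordinates are preferred for the block decomposition in Corollary \ref{CoroDecP2P1Diag}.
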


\begin{proposition}
  We have $\adherence{P_2(N_1)}=N_2$.
\end{proposition}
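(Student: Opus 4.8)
The plan is to establish the two inclusions separately, the containment $\adherence{P_2(N_1)} \subset N_2$ being immediate and the reverse inclusion following from an orthogonality argument that exploits the generic position hypothesis.

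First, since $P_2$ is the orthogonal projection onto $N_2$, every vector of the form $P_2 x$ lies in $N_2$; as $N_2$ is closed this gives $\adherence{P_2(N_1)} \subset N_2$. It remains to prove the reverse inclusion $N_2 \subset \adherence{P_2(N_1)}$, and for this I would show that the orthogonal complement of the closed subspace $\adherence{P_2(N_1)}$ \emph{inside} the Hilbert space $N_2$ is reduced to $\lbrace 0 \rbrace$.

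So suppose $y \in N_2$ satisfies $y \perp P_2(N_1)$. Then for every $x \in N_1$ we have $\ps{y}{P_2 x} = \ps{P_2 y}{x} = 0$, and since $y \in N_2$ forces $P_2 y = y$, this reads $\ps{y}{x} = 0$ for all $x \in N_1$. Hence $y \in N_1^\perp$, so $y \in N_1^\perp \cap N_2$, which equals $\lbrace 0 \rbrace$ by the generic position assumption. Consequently $\adherence{P_2(N_1)}$ is a closed subspace of $N_2$ whose orthogonal complement relative to $N_2$ is trivial, so it coincides with $N_2$, which proves the proposition.

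The argument is essentially direct; the only point requiring care is to phrase the second inclusion in terms of the orthogonal complement taken relative to $N_2$ rather than relative to $H$, and to notice that the identity $P_2 y = y$ is precisely what converts the hypothesis $y \perp P_2(N_1)$ into $y \perp N_1$. No serious obstacle arises here, and it is worth observing that the full strength of generic position is not used: only the single relation $N_1^\perp \cap N_2 = \lbrace 0 \rbrace$ is invoked.
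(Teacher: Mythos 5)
Your proof is correct and follows essentially the same route as the paper: both arguments use the self-adjointness of $P_2$ and only the single generic-position relation $N_1^\perp \cap N_2 = \lbrace 0 \rbrace$, the paper working with $P_2(N_1)^\perp \subset N_2^\perp$ in $H$ while you take the orthogonal complement relative to $N_2$, a cosmetic difference. Your closing observation that the full generic-position hypothesis is not needed is accurate and applies equally to the paper's proof.
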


\begin{proof}
  The inclusion $\adherence{P_2(N_1)} \subset N_2$ is obvious. In order to prove that $\adherence{P_2(N_1)} \supset N_2$, it is enough to show that $P_2(N_1)^\perp \subset N_2^\perp $. 
  Let $ y \in P_2(N_1)^\perp$. Then, for every $x \in N_1$, we have
$
  0 = \ps{y}{P_2(x)} = \ps{P_2(y)}{x}
$.
  So $ P_2(y) \in N_1^\perp $. As $ P_2(y) \in N_2 $ and $ N_1^\perp \cap N_2 = \lbrace 0 \rbrace $, we obtain $ P_2(y)=0$. So $ y \in N_2^\perp $. 
  %So we have that $P_2(N_1)^\perp \subset N_2^\perp $.  
\end{proof}

\begin{corollaire}
  The vectors $(\tilde{w}_n)_{n \in \N} $ forms an orthonormal basis of $N_2$.
\end{corollaire}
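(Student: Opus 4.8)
The plan is to establish the two defining properties of an orthonormal basis separately: orthonormality, which is immediate from the computations already made, and completeness in $N_2$, which carries the real content. Orthonormality I would record first. By Lemma \ref{LemPs} we have $\ps{w_n}{w_k} = \Kro{n}{k}\lambda_n$; in particular $\norme{w_n}^2 = \lambda_n > 0$, since each $h_n$ was selected to correspond to a nonzero eigenvalue $\lambda_n$. Hence the normalisation $\tilde{w}_n = w_n/\norme{w_n}$ is legitimate and $\ps{\tilde{w}_n}{\tilde{w}_k} = \Kro{n}{k}$. Moreover each $\tilde{w}_n$ is a scalar multiple of $w_n = P_2 h_n$ and therefore lies in $N_2$, so that already $\Span{\tilde{w}_n : n \in \N} \subseteq N_2$.

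It remains to prove the reverse inclusion, i.e. density. The key preliminary step is to observe that $(h_n)_{n\in \N}$ is an orthonormal basis of $N_1$. Indeed, $(\tilde{h}_n)_{n\in \N}$ is an orthonormal basis of $H$, and by the lemma characterising when $\tilde{h}_n \in N_1$, each of these basis vectors lies either in $N_1$ (exactly when its eigenvalue is nonzero, i.e. when it is one of the $h_n$) or in $N_1^\perp$ (when its eigenvalue vanishes). Since these two orthonormal subfamilies together span $H = N_1 \oplus N_1^\perp$ and are contained in $N_1$ and $N_1^\perp$ respectively, each must span its own subspace; in particular $\Span{h_n : n \in \N} = N_1$.

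With this in hand, completeness follows by commuting $P_2$ past the series. For $x \in N_1$ we write $x = \sum_{n} \ps{x}{h_n} h_n$, and by continuity of $P_2$ we obtain $P_2 x = \sum_{n} \ps{x}{h_n} w_n \in \Span{\tilde{w}_n : n \in \N}$. Thus $P_2(N_1) \subseteq \Span{\tilde{w}_n : n \in \N}$, and taking closures together with the proposition $\adherence{P_2(N_1)} = N_2$ established just above gives $N_2 \subseteq \Span{\tilde{w}_n : n \in \N}$. Combined with the orthonormality and the inclusion from the first paragraph, this shows that $(\tilde{w}_n)_{n\in \N}$ is a complete orthonormal system in $N_2$, hence an orthonormal basis of $N_2$. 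I expect the main obstacle to be precisely this completeness argument — and within it the identification of $(h_n)_n$ as a basis of $N_1$, which requires careful bookkeeping of which basis vectors survive in $N_1$ and the essential, if routine, use of the continuity of $P_2$ to interchange it with the infinite sum; the orthonormality itself is a one-line consequence of Lemma \ref{LemPs}.
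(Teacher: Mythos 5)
Your proof is correct and takes essentially the same route as the paper's: orthonormality from Lemma \ref{LemPs}, then completeness by showing $P_2(N_1) \subset \Span{w_n, n \in \N}$ through the expansion of $x \in N_1$ in the $h_n$'s and invoking $\adherence{P_2(N_1)} = N_2$. The only difference is that you explicitly justify that $(h_n)_{n\in\N}$ is an orthonormal basis of $N_1$ (via the dichotomy $\tilde{h}_n \in N_1$ or $\tilde{h}_n \in N_1^\perp$) and the continuity of $P_2$, two details the paper uses implicitly when it writes $x = \sum_n \nu_n h_n$.
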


\begin{proof}
  We know from Lemma \ref{LemPs} that $(\tilde{w}_n)_{n \in \N} $ is an orthonormal system in $N_2$. It remains to show that it is a generating system.
We notice that the inclusion $ P_2(N_1) \subset \Span{w_n, n \in \N} $ implies, using $ \adherence{P_2(N_1)} = N_2 $ and $ \Span{w_n, n \in \N} = \Span{\tilde{w}_n, n \in \N} \subset N_2 $, that
  $$
  N_2 = \adherence{P_2(N_1)} \subset \Span{\tilde{w}_n, n \in \N} \subset N_2 ,
  $$
  and then $ N_2 = \Span{\tilde{w}_n, n \in \N}$.  
  Let us show that $ P_2(N_1) \subset \Span{w_n, n \in \N} $. For $ x \in N_1$, there exists a sequence $(\nu_n)$ such that $ x = \sum_n \nu_n h_n $. Therefore 
  $
    P_2(x)
    = P_2(\sum_n \nu_n h_n) 
    = \sum_n \nu_n P_2(h_n) 
    = \sum_n \nu_n w_n     
  $.
 Finally $ P_2(x) \in \Span{w_n, n \in \N} $.
\end{proof}

Similarly, we can also show the following proposition.

\begin{proposition}
  We have $\adherence{(I-P_1)(N_2)}=N_1^\perp$.  
  Moreover, $(\tilde{f}_n)_{n \in \N} $ is an orthonormal basis of $N_1^\perp$.
\end{proposition}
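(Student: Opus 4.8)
The plan is to mirror exactly the argument that just established $\adherence{P_2(N_1)}=N_2$ together with the fact that $(\tilde{w}_n)_{n\in\N}$ is an orthonormal basis of $N_2$, simply interchanging the roles: the operator $P_2$ acting on $N_1$ is replaced by $I-P_1$ acting on $N_2$, and the vectors $w_n$ are replaced by the vectors $f_n=(I-P_1)P_2h_n$. Accordingly I would split the proof into two parts, first the density statement $\adherence{(I-P_1)(N_2)}=N_1^\perp$, and then the claim that $(\tilde{f}_n)_{n\in\N}$ is an orthonormal basis of $N_1^\perp$.

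For the density statement, the inclusion $\adherence{(I-P_1)(N_2)}\subset N_1^\perp$ is immediate since $I-P_1$ is the orthogonal projection onto $N_1^\perp$. For the reverse inclusion it suffices to check $(I-P_1)(N_2)^\perp\subset N_1$. Taking $y$ orthogonal to $(I-P_1)(N_2)$, for every $x\in N_2$ we get $0=\ps{y}{(I-P_1)x}=\ps{(I-P_1)y}{x}$, hence $(I-P_1)y\in N_2^\perp$. Since also $(I-P_1)y\in N_1^\perp$ and the pair is in generic position, the relation $N_1^\perp\cap N_2^\perp=\lbrace 0\rbrace$ forces $(I-P_1)y=0$, i.e. $y\in N_1$. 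This yields $N_1^\perp\subset\adherence{(I-P_1)(N_2)}$.

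For the basis statement, the orthonormality of $(\tilde{f}_n)$ is already supplied by the earlier corollary stating that $\Sspan{h_n,f_n}$ is orthogonal to $\Sspan{h_k,f_k}$ for $n\ne k$, which gives $\ps{f_n}{f_k}=0$ when $n\ne k$; each $\tilde{f}_n$ is a unit vector by definition, and $f_n\in N_1^\perp$. It then remains to prove that the $\tilde{f}_n$ generate $N_1^\perp$, for which I would establish $(I-P_1)(N_2)\subset\Span{f_n,n\in\N}$. The key identity is $(I-P_1)w_n=f_n$: indeed $w_n=\lambda_n h_n+f_n$ with $h_n\in N_1$ and $f_n\in N_1^\perp$, so $(I-P_1)w_n=\lambda_n(I-P_1)h_n+(I-P_1)f_n=f_n$, whence $(I-P_1)\tilde{w}_n=f_n/\norme{w_n}$ is collinear with $f_n$. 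Writing an arbitrary $y\in N_2$ in the basis $(\tilde{w}_n)$ as $y=\sum_n\mu_n\tilde{w}_n$ and using continuity of $I-P_1$, we obtain $(I-P_1)y=\sum_n\mu_n f_n/\norme{w_n}\in\Span{f_n,n\in\N}$. Combined with the first part this gives $N_1^\perp=\adherence{(I-P_1)(N_2)}\subset\Span{f_n,n\in\N}\subset N_1^\perp$, so $N_1^\perp=\Span{\tilde{f}_n,n\in\N}$.

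I do not anticipate any real obstacle: the whole argument is routine once the companion proposition (the basis property of $(\tilde{w}_n)$ in $N_2$) is available, and the only load-bearing computation is the identity $(I-P_1)w_n=f_n$, which transfers that basis property to $(\tilde{f}_n)$ in $N_1^\perp$. The one point requiring care is to invoke the correct generic-position intersection, namely $N_1^\perp\cap N_2^\perp=\lbrace 0\rbrace$ here, in contrast to $N_1^\perp\cap N_2=\lbrace 0\rbrace$ used in the proof of $\adherence{P_2(N_1)}=N_2$.
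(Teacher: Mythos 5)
Your proof is correct and is exactly the argument the paper intends: the paper omits the proof entirely, saying only that the proposition follows ``similarly'' to the preceding results $\adherence{P_2(N_1)}=N_2$ and the basis property of $(\tilde{w}_n)$, and your writeup is precisely that transposition, including the two load-bearing points --- the identity $(I-P_1)w_n=f_n$ and the substitution of the generic-position condition $N_1^\perp\cap N_2^\perp=\lbrace 0\rbrace$ for $N_1^\perp\cap N_2=\lbrace 0\rbrace$.
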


\begin{corollaire}
  \label{CoroDecP2P1Diag}
%  With the same assumptions,  
  The operator $P_2P_1$ can be written as a direct sum of $2\times2$ matrices, i.e.:
  \[
  P_2P_1 = \bigoplus_{n \in \N} P_2P_1\mid_{\Sspan{h_n, \tilde{f}_n}} .
  \]
\end{corollaire}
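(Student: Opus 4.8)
The plan is to verify that the two-dimensional subspaces $K_n := \Sspan{h_n, \tilde{f}_n}$ form a family of mutually orthogonal, closed, $P_2P_1$-invariant subspaces whose closed orthogonal sum exhausts $H$, and then to invoke the standard fact that a bounded operator leaving each member of such a family invariant decomposes as the orthogonal direct sum of its restrictions.

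First I would assemble the ingredients already established. Since $\tilde{f}_n = f_n / \norme{f_n}$, we have $K_n = \Sspan{h_n, \tilde{f}_n} = \Sspan{h_n, f_n}$, so the preceding corollary gives both the mutual orthogonality $K_n \perp K_k$ for $n \neq k$ and the invariance $P_2P_1(K_n) = \Sspan{w_n} \subset K_n$. Moreover $h_n \in N_1$ while $f_n = (I - P_1)P_2 h_n \in N_1^\perp$, so $h_n \perp f_n$ and each $K_n$ is genuinely two-dimensional with orthonormal basis $\{h_n, \tilde{f}_n\}$; note that $f_n \neq 0$, because $\lambda_n = 1$ is impossible in generic position (it would force $P_2 h_n = h_n$, hence $h_n \in N_1 \cap N_2 = \{0\}$).

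The hard part will be checking totality, i.e. that $\adherence{\bigoplus_{n} K_n} = H$. For this I would first observe that $(h_n)_{n}$ is an orthonormal basis of $N_1$: the family $(\tilde{h}_n)$ is an orthonormal basis of $H$, those $\tilde{h}_n$ with $\tilde{\lambda}_n = 0$ lie in $N_1^\perp$ by the classification lemma, and the remaining ones are exactly the $h_n \in N_1$, so they must span $N_1$. Combined with the proposition stating that $(\tilde{f}_n)$ is an orthonormal basis of $N_1^\perp$, the closed subspace $\adherence{\bigoplus_n K_n}$ contains $\Span{h_n} = N_1$ and $\Span{\tilde{f}_n} = N_1^\perp$, whence it contains $N_1 + N_1^\perp = H$.

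Finally, having a decomposition of $H$ into mutually orthogonal closed subspaces $K_n$, each invariant under the bounded operator $P_2P_1$, I would conclude that $P_2P_1$ agrees with $\bigoplus_n P_2P_1\mid_{K_n}$ on the dense subspace $\bigoplus_n K_n$ and hence, by continuity, everywhere. This gives exactly the claimed decomposition $P_2P_1 = \bigoplus_{n \in \N} P_2P_1\mid_{\Sspan{h_n, \tilde{f}_n}}$.
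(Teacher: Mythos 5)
Your proof is correct and follows essentially the same route as the paper's: invariance and mutual orthogonality of the subspaces $\Sspan{h_n,\tilde{f}_n}$ from the preceding corollaries, plus the decomposition $H = N_1 \oplus N_1^\perp = \Span{h_n, n\in\N} \oplus \Span{\tilde{f}_n, n\in\N}$. You merely add details the paper leaves implicit (that $(h_n)$ is indeed an orthonormal basis of $N_1$, that $f_n \neq 0$ since $\lambda_n = 1$ is excluded by generic position, and the density-plus-continuity step), which is a welcome tightening rather than a different argument.
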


\begin{proof}
  As $ \tilde{f}_n= \frac{f_n}{\norme{f_n}} $, we have $\Sspan{h_n,f_n} = \Sspan{h_n, \tilde{f}_n } $, and 
  $
  P_2P_1(\Sspan{h_n,\tilde{f}_n}) \subset \Sspan{h_n,\tilde{f}_n} .
  $
Also, $\Sspan{h_n,\tilde{f}_n}$ is orthogonal to $\Sspan{h_k,\tilde{f_k}}$ whenever $n \ne k$. Moreover, $(\tilde{f}_n)_{n \in \N} $ is an orthonormal basis of $N_1^\perp$.  
  We can write $H$ as $   H
  = N_1 \oplus N_1^\perp = \Span{h_n, n \in \N} \oplus \Span{\tilde{f}_n, n \in \N} 
  = \oplus_n \Sspan{h_n,\tilde{f}_n}$
%  
%  \begin{align*}
%  H
%  &= N_1 \oplus N_1^\perp \\
%  &= \Span{h_n, n \in \N} \oplus \Span{\tilde{f}_n, n \in \N} \\
%  &= \oplus_n \Span{h_n,\tilde{f}_n}
%  \end{align*}
which proves the result.
\end{proof}

\begin{lemme}
  \label{Lem2x2}
  With respect to the orthonormal basis $(h_n, \tilde{f}_n)$, the restriction of $P_2P_1$ to its invariant subspaces $\Sspan{h_n,\tilde{f}_n} $ is given by:
  \[
  P_2P_1\mid_{\Sspan{h_n, \tilde{f}_n}}
  =
  \left( \begin{array}{cc}
\lambda_n & 0 \\ 
\sqrt{\lambda_n(1-\lambda_n)} & 0
\end{array} \right).
  \]
\end{lemme}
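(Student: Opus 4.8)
The plan is to read off the two columns of the matrix directly, by computing the images $P_2P_1 h_n$ and $P_2P_1\tilde{f}_n$ and expanding each in the orthonormal basis $(h_n,\tilde{f}_n)$ of the invariant subspace $\Sspan{h_n,\tilde{f}_n}$.

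For the first column I would use that $h_n \in N_1$, so $P_1 h_n = h_n$ and hence $P_2P_1 h_n = P_2 h_n = w_n$. From the decomposition $w_n = \lambda_n h_n + f_n$ already established (in the lemma showing $\Sspan{h_n,w_n}=\Sspan{h_n,f_n}$), the only quantity left to determine is $\norme{f_n}$, since $\tilde{f}_n = f_n/\norme{f_n}$. Writing $f_n = w_n - \lambda_n h_n$ and invoking Lemma \ref{LemPs}, which supplies $\norme{w_n}^2 = \ps{w_n}{w_n} = \lambda_n$ and $\ps{w_n}{h_n} = \lambda_n$, a direct expansion gives $\norme{f_n}^2 = \lambda_n - 2\lambda_n^2 + \lambda_n^2 = \lambda_n(1-\lambda_n)$, so that $\norme{f_n} = \sqrt{\lambda_n(1-\lambda_n)}$. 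Consequently $P_2P_1 h_n = \lambda_n h_n + \sqrt{\lambda_n(1-\lambda_n)}\,\tilde{f}_n$, which is precisely the first column $\bigl(\lambda_n,\,\sqrt{\lambda_n(1-\lambda_n)}\bigr)$.

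For the second column I would exploit the construction of $f_n$: since $f_n = (I-P_1)P_2 h_n$ lies in $N_1^\perp$, so does $\tilde{f}_n$, and therefore $P_1\tilde{f}_n = 0$. This immediately yields $P_2P_1\tilde{f}_n = P_2(0) = 0$, i.e. the second column is $(0,0)$. Assembling the two columns produces the asserted matrix.

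There is no real obstacle here; once $w_n = \lambda_n h_n + f_n$ is in hand the argument is a two-line computation. The only place demanding a little care is the norm identity $\norme{f_n}^2 = \lambda_n(1-\lambda_n)$: one must use \emph{both} inner-product identities of Lemma \ref{LemPs} and the fact that $\lambda_n$ is real (being an eigenvalue of the self-adjoint operator $P_1P_2P_1$), so that the two cross terms in the expansion of $\norme{w_n - \lambda_n h_n}^2$ combine to $-2\lambda_n^2$ without any spurious imaginary part.
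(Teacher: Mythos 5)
Your proof is correct and follows essentially the same route as the paper: both columns are obtained identically, with $P_2P_1\tilde{f}_n=0$ because $\tilde{f}_n\in N_1^\perp$, and $P_2P_1h_n=w_n=\lambda_n h_n+\norme{f_n}\tilde{f}_n$. The only (inessential) difference is in computing $\norme{f_n}$: you expand $\norme{w_n-\lambda_n h_n}^2$ using the inner products of Lemma \ref{LemPs}, while the paper uses the Pythagorean identity $\norme{f_n}^2=\norme{P_2P_1h_n}^2-\norme{P_1P_2P_1h_n}^2$ --- the same underlying facts in a slightly different arrangement.
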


\begin{proof}
  As $\tilde{f}_n \in N_1^\perp$, we have $P_1\tilde{f}_n=0$, so $P_2P_1\tilde{f}_n=0$. We can represent $P_2P_1h_n$ as:
  $
  P_2P_1h_n
  = P_1P_2P_1h_n + (I-P_1)P_2P_1h_n 
  = \lambda_n h_n + f_n 
  = \lambda_n h_n + \norme{f_n}\tilde{f}_n 
  $.
  
  In order to complete the proof, we have to show that $\norme{f_n}=\sqrt{\lambda_n(1-\lambda_n)}$. We have 
  $
  \norme{f_n}^2
  = \norme{(I-P_1)P_2P_1h_n}^2 
  = \norme{P_2P_1h_n}^2 - \norme{P_1P_2P_1h_n}^2 
  = \ps{P_1P_2P_1 h_n}{h_n} - \norme{\lambda_n h_n}^2 
  = \lambda_n - \lambda_n^2 
  $ .
\end{proof}

\begin{remarque}
  As $ 0 \le P_1P_2P_1 \le I$, we have $0 \le \lambda_n \le 1$ for every $n$. There exists $\theta_n $ such that $0 \le \theta_n \le \frac{\pi}{2}$ and $\cos(\theta_n)^2=\lambda_n$. Now we can rewrite $P_2P_1\mid_{\Span{h_n, \tilde{f}_n}}$ as:
  \[
  P_2P_1\mid_{\Sspan{h_n, \tilde{f}_n}}
  =
  \left( \begin{array}{cc}
\cos(\theta_n)^2 & 0 \\ 
\cos(\theta_n)\sin(\theta_n) & 0
\end{array} \right).
  \]
  This corresponds to the matrix of the composition of two orthogonal projections in the plane, projecting onto two lines of angle $\theta_n$.
\end{remarque}

\begin{corollaire}
\label{CoroEllipseP2P1Diag}
The numerical range $\NumRan{P_2P_1\mid_{\Sspan{h_n, \tilde{f}_n}}} $  is the ellipse $\Ellipse{\lambda_n} $.
\end{corollaire}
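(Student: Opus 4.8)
The plan is to invoke the elliptical range theorem for $2\times 2$ matrices: the numerical range of any $2\times 2$ complex matrix $A$ is a (possibly degenerate) filled ellipse whose foci are the two eigenvalues $\mu_1,\mu_2$ of $A$ and whose minor axis has length $\sqrt{\mathrm{tr}(A^*A)-\abs{\mu_1}^2-\abs{\mu_2}^2}$. By Lemma \ref{Lem2x2}, the restriction of $P_2P_1$ to $\Sspan{h_n,\tilde{f}_n}$ is represented, in the orthonormal basis $(h_n,\tilde{f}_n)$, by
\[
A=\left(\begin{array}{cc}
\lambda_n & 0 \\
\sqrt{\lambda_n(1-\lambda_n)} & 0
\end{array}\right),
\]
so the whole task reduces to identifying the foci and the minor axis of $\NumRan{A}$ and comparing them with Definition \ref{DefEllipse}.

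First I would compute the eigenvalues: the characteristic polynomial is $\mu(\mu-\lambda_n)$, so the eigenvalues are $0$ and $\lambda_n$, which are exactly the two foci prescribed for $\Ellipse{\lambda_n}$. Next I would compute $\mathrm{tr}(A^*A)$; a one-line calculation gives $\mathrm{tr}(A^*A)=\lambda_n^2+\lambda_n(1-\lambda_n)=\lambda_n$, so the minor axis has length $\sqrt{\lambda_n-0^2-\lambda_n^2}=\sqrt{\lambda_n(1-\lambda_n)}$, matching Definition \ref{DefEllipse} as well. Together these two computations force $\NumRan{A}=\Ellipse{\lambda_n}$. (The degenerate case $\lambda_n=1$, where the minor axis collapses and $\NumRan{A}=[0,1]=\Ellipse{1}$, is covered by the same formulas.)

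Alternatively, and in keeping with the support-function method already used in the paper, one can avoid quoting the elliptical range theorem altogether. Since $A$ acts on a finite-dimensional space, $\NumRan{A}$ is compact and convex, hence determined by its support function; diagonalizing the Hermitian part $\Reel{\exp(-\I\alpha)A}$ exactly as in the proof of Lemma \ref{RqriTilde}, but now for the single value $\lambda=\lambda_n$ rather than taking a supremum over $\Spec{P_2P_1}$, yields $\FctionSupp{\NumRan{A}}{\alpha}=\frac{1}{2}(\cos(\alpha)\lambda_n+\sqrt{\lambda_n(1-\sin(\alpha)^2\lambda_n)})$, which is precisely the support function of $\Ellipse{\lambda_n}$ computed earlier. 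The characterization of closed convex sets by their support functions then gives equality of the two sets. There is essentially no serious obstacle here: the only point requiring care is the correct normalization of the minor-axis formula in the elliptical range theorem (respectively, keeping track of the sign $\epsilon_\alpha$ when specializing the computation of Lemma \ref{RqriTilde} to a single block), after which the identification with $\Ellipse{\lambda_n}$ is a direct comparison of parameters.
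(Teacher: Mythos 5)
Your proposal is correct and follows essentially the same route as the paper, which simply cites the classical elliptical range theorem for $2\times 2$ matrices; your explicit verification that the eigenvalues are $0$ and $\lambda_n$ and that $\mathrm{tr}(A^*A)-\abs{0}^2-\abs{\lambda_n}^2=\lambda_n(1-\lambda_n)$ just fills in the parameter identification the paper leaves to the reader. The alternative support-function argument you sketch is also valid but goes beyond what the paper does here.
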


\begin{proof}
  This is consequence of the classical ellipse lemma for the numerical range of a $2\times 2$ matrix (see for instance \cite{Gustafson_Rao}).
\end{proof}

The following corollary is a ``generic position" version of Theorem \ref{ThWP2P1Diag}.

\begin{corollaire}
  \label{CoroWP2P1DiagPosGen}
%  With the same assumptions as before,
  Let $(N_1,N_2)$ be two subpsaces in generic position such that $P_1P_2P_1$ is diagonalizable, then 
  the numerical range $\NumRan{P_2P_1}$ is the convex hull of the ellipses $\Ellipse{\lambda}$ for all the $\lambda$'s which are non zero eigenvalues of $P_2P_1$, i.e.:
  \begin{align*}
  \NumRan{P_2P_1}
%  &= \Con{n \in \N}{\NumRan{P_2P_1_{\Span{h_n, \tilde{f}_n}}}} \\
  &= \Con{}{\cup_{\lambda \in \Specp{P_2P_1} \setminus \lbrace 0\rbrace} \Ellipse{\lambda}} .
  \end{align*}
\end{corollaire}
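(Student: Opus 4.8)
The plan is to feed the block decomposition of Corollary~\ref{CoroDecP2P1Diag} into a computation of the numerical range of a countable direct sum. Write $T=P_2P_1=\bigoplus_{n\in\N}T_n$ with $T_n=P_2P_1\mid_{\Sspan{h_n,\tilde{f}_n}}$, so that by Corollary~\ref{CoroEllipseP2P1Diag} one has $\NumRan{T_n}=\Ellipse{\lambda_n}$. The formula to be proved then reduces to the identity $\NumRan{T}=\Con{}{\cup_n\NumRan{T_n}}$, because the $\lambda_n$ are exactly the non-zero eigenvalues of $T$: the block displayed in Lemma~\ref{Lem2x2} has eigenvalues $\lambda_n$ and $0$, and an eigenvector of $\bigoplus_n T_n$ must be supported in a single block, so $\Specp{T}\setminus\lbrace 0\rbrace=\lbrace\lambda_n:n\in\N\rbrace$. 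Since the two-summand formula $\NumRan{A\oplus B}=\Con{}{\NumRan{A},\NumRan{B}}$ recalled in Section~2 does not cover a countable sum, I would prove both inclusions of $\NumRan{T}=\Con{}{\cup_n\NumRan{T_n}}$ by hand, relying on the paper's convention that $\Con{}{\cdot}$ denotes the set of all \emph{countable} convex combinations.

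For $\NumRan{T}\subset\Con{}{\cup_n\NumRan{T_n}}$, take a unit vector $x=(x_n)_n\in\bigoplus_n\Sspan{h_n,\tilde{f}_n}$ and expand $\ps{Tx}{x}=\sum_n\ps{T_n x_n}{x_n}=\sum_n\norme{x_n}^2\ps{T_n\hat{x}_n}{\hat{x}_n}$, where $\hat{x}_n=x_n/\norme{x_n}$ for $x_n\ne 0$. Each factor $\ps{T_n\hat{x}_n}{\hat{x}_n}$ lies in $\NumRan{T_n}=\Ellipse{\lambda_n}$, the weights $\norme{x_n}^2$ are non-negative with $\sum_n\norme{x_n}^2=1$, and for the indices with $x_n=0$ one uses $0\in\Ellipse{\lambda_n}$; hence $\ps{Tx}{x}$ is a countable convex combination of points of $\cup_n\NumRan{T_n}$.

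For the reverse inclusion I would start from $z=\sum_k c_k\varepsilon_k$ with $\varepsilon_k\in\Ellipse{\lambda_{m_k}}$, $c_k\ge 0$ and $\sum_k c_k=1$. Since all $\varepsilon_k$ are bounded the series converges absolutely and may be regrouped by block index: setting $d_n=\sum_{m_k=n}c_k$ and, when $d_n>0$, $w_n=d_n^{-1}\sum_{m_k=n}c_k\varepsilon_k$, the compactness and convexity of $\Ellipse{\lambda_n}$ guarantee $w_n\in\Ellipse{\lambda_n}=\NumRan{T_n}$, so that $z=\sum_n d_n w_n$ with $\sum_n d_n=1$. Picking unit vectors $u_n\in\Sspan{h_n,\tilde{f}_n}$ with $\ps{T_n u_n}{u_n}=w_n$ and putting $x=(\sqrt{d_n}\,u_n)_n$, one gets $\norme{x}^2=\sum_n d_n=1$, so $x$ is a genuine unit vector of $H$, and $\ps{Tx}{x}=\sum_n d_n w_n=z$; thus $z\in\NumRan{T}$.

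The hard part is exactly the step from finite to countable direct sums. One must check that the vector $x$ assembled in the reverse inclusion is square-summable---which is precisely the normalization $\sum_n d_n=1$---and that grouping infinitely many contributions inside one block stays inside $\Ellipse{\lambda_n}$, which is where I use that each ellipse is a compact convex set. The remaining points, namely the identification $\lbrace\lambda_n:n\in\N\rbrace=\Specp{P_2P_1}\setminus\lbrace 0\rbrace$ and the handling of zero weights, are routine once the $2\times2$ form of the blocks from Lemma~\ref{Lem2x2} is available.
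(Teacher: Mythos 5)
Your proposal is correct and takes essentially the same route as the paper: the paper's own proof also feeds Corollary \ref{CoroDecP2P1Diag} and Corollary \ref{CoroEllipseP2P1Diag} into the two inclusions via countable convex combinations, and your version is if anything more careful, since you supply the square-root normalization $x=(\sqrt{d_n}\,u_n)_n$ (the paper writes $x=\sum_n \alpha_n x_n$ with $\sum_n\alpha_n=1$, where $\sqrt{\alpha_n}$ is what makes $\norme{x}=1$) and you justify regrouping repeated blocks inside one $\Ellipse{\lambda_n}$ by its compactness and convexity. The one inaccuracy --- the claim that an eigenvector of $\bigoplus_n T_n$ must be supported in a single block, which fails when eigenvalues repeat --- is harmless, because the identity $\Specp{P_2P_1}\setminus\lbrace 0\rbrace=\lbrace\lambda_n : n\in\N\rbrace$ follows componentwise: if $P_2P_1x=\mu x$ with $x\ne 0$, then $T_nx_n=\mu x_n$ for every $n$ and $x_n\ne 0$ for some $n$.
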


\begin{proof}
%  Simply combine the fact that $P_2P_1 = \bigoplus_{n \in \N} P_2P_1\mid_{\Span{h_n, \tilde{f}_n}} $,  with the fact that $\NumRan{T \bigoplus S} = \Con{}{\NumRan{T},\NumRan{S}} $ and $\NumRan{P_2P_1\mid_{\Span{h_n, \tilde{f}_n}}} = \Ellipse{\lambda_n} $.
  From Corollary \ref{CoroDecP2P1Diag}, we have that $H = \oplus_{n \in \N} \Sspan{h_n, \tilde{f}_n} $. Let $x = \oplus_{n \in \N} x_n $ be a vector in $H$ such that $x_n \in \Sspan{h_n, \tilde{f}_n} $ and $\norme{x}^2 = \sum_{n \in \N} \norme{x_n}^2 =1 $. Then $\ps{P_2P_1x}{x} = \sum_{n\in \N}\ps{P_2P_1x_n}{x_n} = \sum_{n\in \N} \norme{x_n}^2\frac{\ps{P_2P_1x_n}{x_n}}{\norme{x_n}^2} $. From Corollary \ref{CoroEllipseP2P1Diag}, we have that $\frac{\ps{P_2P_1x_n}{x_n}}{\norme{x_n}^2} \in \Ellipse{\lambda_n} $. So $\NumRan{P_2P_1} \subset \Con{}{\cup_{\lambda \in \Specp{P_2P_1} \setminus \lbrace 0\rbrace} \Ellipse{\lambda}} $.
  
  Let $(\alpha_n)_{n \in \N}$ be a sequence such that $\alpha_n \in [0,1] $ and $\sum_{n \in \N} \alpha_n =1 $. Let $(\epsilon_n)_{n \in \N}$ be a sequence such that $\epsilon_n \in \Ellipse{\lambda_n} $. From Corollary \ref{CoroEllipseP2P1Diag}, there exist some $x_n \in \Sspan{h_n, \tilde{f}_n} $ such that $\norme{x_n}=1 $ and $\epsilon_n = \ps{P_2P_1x_n}{x_n} $. Let $x=\sum_{n \in \N} \alpha_n x_n$, then $\ps{P_2P_1x}{x} = \sum_{n \in \N} \alpha_n \epsilon_n $. So $\Con{}{\cup_{\lambda \in \Specp{P_2P_1} \setminus \lbrace 0\rbrace} \Ellipse{\lambda}} \subset \NumRan{P_2P_1} $.
\end{proof}

  Using the same idea as in the proof of Theorem \ref{CoroWP2P1}, we can deduce Theorem \ref{ThWP2P1Diag} from Corollary \ref{CoroWP2P1DiagPosGen}.

  With this Corollary, we can see that the numerical range of a product of two orthogonal projections is not closed in general. 

\begin{exemple}
  Let $(N_1,N_2) $ be two subspaces in generic position and denote $P_{N_i}=P_i $. Suppose that $P_1P_2P_1$ is diagonalizable. Moreover suppose that there exists an orthonormal basis $(h_n)_{n \in \N^*} $ of $N_1$ such that for all $x \in H$ we have
  $$
  P_1P_2P_1 x = \sum_{n \in \N^*} \left( 1-\frac{1}{n+1} \right) \ps{x}{h_n}h_n.
  $$  
Then we have that $\Specp{P_2P_1} = \lbrace 1-\frac{1}{n+1}, n \in \N^* \rbrace \cup \lbrace 0 \rbrace $ and $\Spec{P_2P_1} = \Specp{P_2P_1} \cup \lbrace 1 \rbrace $.  Therefore by Corollary \ref{CoroWP2P1DiagPosGen} and Theorem \ref{CoroWP2P1}, we have that $ \NumRan{P_2P_1} = \Con{}{\cup_{\lambda \in \Specp{P_2P_1} \setminus \lbrace 0\rbrace} \Ellipse{\lambda}} $ and $ \adherence{\NumRan{P_2P_1}}= \adherence{\Con{}{\cup_{\lambda \in \Spec{P_{M_2}P_{M_1}}}\Ellipse{\lambda}}} $.

  We have that $1 \in  \adherence{\NumRan{P_2P_1}} $ but $1 \notin \NumRan{P_2P_1} $. Note that $1 \in \Ellipse{\lambda} $ if and only if $\lambda = 1 $. We have that (see Remark \ref{RqEqEllipse})
  $$
  x_{1-\frac{1}{n+1}}(0) 
    = \frac{1}{2}(\sqrt{1 - \frac{1}{n+1}} + 1-\frac{1}{n+1} )
    \in \Ellipse{1-\frac{1}{n+1}}
    \subset \NumRan{P_2P_1}.
  $$
As $\lim_{n \rightarrow \infty} x_{1-\frac{1}{n+1}}(0)  = 1 $ we have that $1 \in  \adherence{\NumRan{P_2P_1}} $ .

 Suppose that $1 \in \NumRan{P_2P_1} $. Then there exists $x \in H$ such that $\norme{x}= 1 $ and $ \ps{P_2P_1x}{x} = 1 $. As $ 1 = \abs{\ps{P_2P_1x}{x}} \le \norme{P_2P_1x}\norme{x} \le 1 $, we have that $ \abs{\ps{P_2P_1x}{x}} = \norme{P_2P_1x}\norme{x}$, so there exists $\lambda$ such that $P_2P_1x = \lambda x $. We get that $ 1 = \ps{P_2P_1x}{x} =\lambda \ps{x}{x} = \lambda $. So $\lambda = 1 \in \Specp{P_2P_1} $. This is a contradiction with $1 \notin \Specp{P_2P_1} $, so $1 \notin \NumRan{P_2P_1} $.
\end{exemple}

\begin{exemple}
  \label{RqExNonDiag}
  There are non-trivial examples where $P_1P_2P_1$ admits only $0$ as eigenvalue (hence $P_1P_2P_1$ is not diagonalizable). Let $T \in \B{\LdE} $ be defined by $Tf(x) = x f(x)$. One can easily show that $T$ is an injective positive contraction that has no eigenvalues, with $\Ker{I-T}=\lbrace 0 \rbrace$ and $\Spec{T}=[0,1]$.
   If we set $C=T^{1/2}$ and $S=(I-T)^{1/2}$, we easily see that $C$ and $S$ are injective and positive contractions with no eigenvalues such that $C^2+S^2=I $. Moreover $C$ and $S$ commute. We set $ H = \LdE \oplus \LdE $ and
\begin{align*}
  P_1
  =
  \left( \begin{array}{cc}
I & 0 \\ 
0 & 0
\end{array} \right), \, 
  &P_2
  =
  \left( \begin{array}{cc}
C^2 & CS \\ 
CS  & S^2
\end{array} \right) .
\end{align*}
  Then $P_1$ and $P_2$ are orthogonal projections onto subspaces sitting in generic position, and
  \[
  P_1P_2P_1
  =
  \left( \begin{array}{cc}
C^2 & 0 \\ 
0   & 0
\end{array} \right) .
  \]
  Suppose there exist $f\oplus g \in H$ and $\lambda \in \Spec{P_2P_1}$ such that $P_1P_2P_1(f\oplus g) = \lambda (f\oplus g)$. Then $xf(x) = \lambda f(x)$ almost everywhere, and $0= \lambda g(x)$. This implies that $\lambda = 0$ and $f = 0$. So $0$ is the only eigenvalue of $P_1P_2P_1$. However, we have $\Spec{P_1P_2P_1}= \Spec{T} \cup \lbrace 0 \rbrace = [0,1] $.
\end{exemple}

\begin{remarque}
  At the end of \cite{Nees_1999}, the author asks if $\norme{P_2P_1}^2$ is an accumulation point of eigenvalues, and if the spectrum $P_2P_1$ without zero consists only of eigenvalues. The previous example answers these two questions negatively. 
\end{remarque}

\subsection{Localization of $\NumRan{P_2P_1}$}

%In this section we will include $\NumRan{P_2P_1}$ in a rectangle, then in a sector plan. 
First we have this simple consequence of Theorem \ref{CoroWP2P1}.
\begin{corollaire}
  \label{CoroWP2P1Subset}
  Let $P_1,P_2$ be two orthogonal projections. We have:
  \[
  \adherence{\NumRan{P_2P_1}} \subset \adherence{\Con{\lambda \in [0,1]}{\Ellipse{\lambda}}}.
  \]
\end{corollaire}
\begin{proof}
  If $P_1=P_2=I$ this is clear since $\NumRan{I} = \lbrace 1 \rbrace$. Now suppose that $P_1 \ne I$ or $P_2 \ne I$. We use Theorem \ref{CoroWP2P1} and the fact that $\Spec{P_2P_1} \subset [0,1]$, so we have the 
  inclusion $\Con{\lambda \in \Spec{P_{M_2}P_{M_1}}}{\Ellipse{\lambda}} \subset \Con{\lambda \in [0,1]}{\Ellipse{\lambda}}$. 
\end{proof}
   This corollary says that if we can include $\adherence{\Con{\lambda \in [0,1]}{\Ellipse{\lambda}}}$ (see Figure \ref{FigConElambda}) in a subset of $\C$, then for any pair of projection $P_1, P_2$ we can include $\NumRan{P_2P_1}$ in the same subset. The next lemma is an example of localization of the numerical range using Corollary \ref{CoroWP2P1Subset}.  

\begin{figure}
\begin{center}
\includegraphics[width=0.5\textwidth]{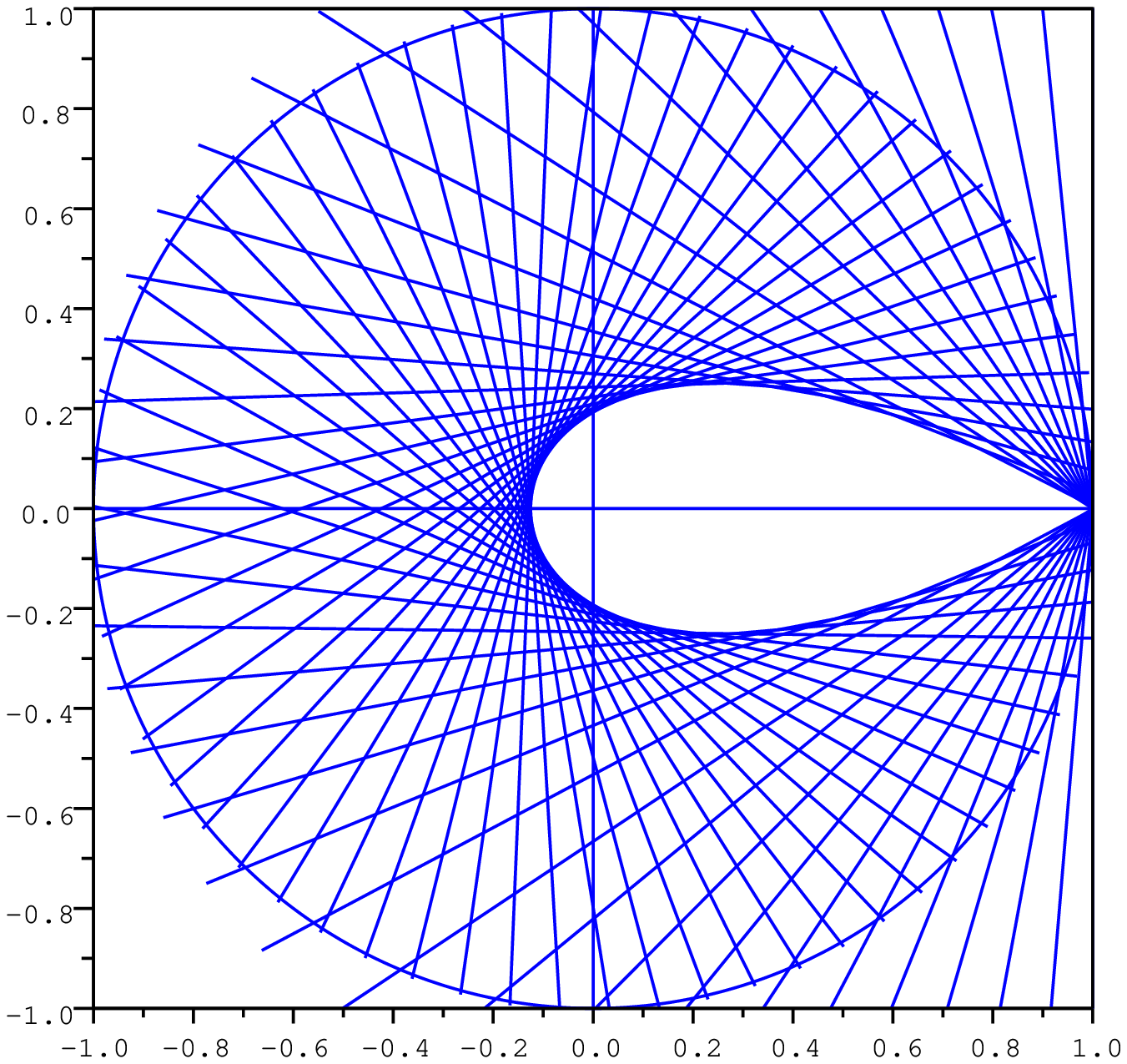}
\end{center}
\caption{\label{FigConElambda} $\Con{\lambda \in [0,1]}{\Ellipse{\lambda}}$}
\end{figure}

\begin{lemme}
  Let $P_1$ and $P_2$ be two orthogonal projections.
  Then $\adherence{\NumRan{P_2P_1}}$ is a subset of the rectangle whose sides are $x=-\frac{1}{8}$, $x=1$, $y=\frac{1}{4}$ and $y=-\frac{1}{4}$.
\end{lemme}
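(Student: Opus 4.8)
The plan is to reduce, via Corollary \ref{CoroWP2P1Subset}, to a purely geometric statement about the ellipses. Write $R$ for the closed rectangle $R = \lbrace z \in \C : -\frac{1}{8} \le \Reel{z} \le 1, \ -\frac{1}{4} \le \Ima{z} \le \frac{1}{4} \rbrace$. That corollary gives $\adherence{\NumRan{P_2P_1}} \subset \adherence{\Con{\lambda \in [0,1]}{\Ellipse{\lambda}}}$, so it suffices to prove $\adherence{\Con{\lambda \in [0,1]}{\Ellipse{\lambda}}} \subset R$. Since $R$ is closed and convex, it is in fact enough to show that each individual ellipse $\Ellipse{\lambda}$, for $\lambda \in [0,1]$, is contained in $R$: the convex hull of a family of subsets of a convex set lies in that set, and passing to the closure preserves the inclusion because $R$ is closed. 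Moreover, as $\Ellipse{\lambda}$ is the (convex) domain enclosed by its boundary, it lies in $R$ as soon as its boundary does.

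To verify this I would work with the parametrization of the boundary given in Remark \ref{RqEqEllipse}, namely $x_\lambda(t) = \frac{\sqrt{\lambda}}{2}\cos(t) + \frac{\lambda}{2}$ and $y_\lambda(t) = \frac{\sqrt{\lambda(1-\lambda)}}{2}\sin(t)$, and estimate the two coordinates separately over all $t \in \R$ and $\lambda \in [0,1]$. For the real part, $x_\lambda(t)$ ranges over $[\frac{\lambda - \sqrt{\lambda}}{2}, \frac{\lambda + \sqrt{\lambda}}{2}]$; the upper endpoint is increasing in $\lambda$ and attains its maximum $1$ at $\lambda = 1$, giving $\Reel{z} \le 1$, while for the lower endpoint the substitution $u = \sqrt{\lambda} \in [0,1]$ turns $\frac{\lambda - \sqrt{\lambda}}{2}$ into $\frac{u^2 - u}{2}$, whose minimum over $[0,1]$ is attained at $u = \frac{1}{2}$ (that is, $\lambda = \frac{1}{4}$) and equals $-\frac{1}{8}$, giving $\Reel{z} \ge -\frac{1}{8}$. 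For the imaginary part, $\abs{y_\lambda(t)} \le \frac{\sqrt{\lambda(1-\lambda)}}{2}$, and since $\lambda(1-\lambda) \le \frac{1}{4}$ on $[0,1]$ with maximum at $\lambda = \frac{1}{2}$, we obtain $\abs{\Ima{z}} \le \frac{1}{4}$. Combining the three bounds shows $\Ellipse{\lambda} \subset R$ for every $\lambda$, which is what was needed.

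The only point that is not entirely automatic is the elementary single-variable optimization producing the sharp value $-\frac{1}{8}$ for the left side of the rectangle, together with the observation that the extremal parameter is the interior value $\lambda = \frac{1}{4}$ rather than an endpoint of $[0,1]$. The remaining estimates are immediate, and there is no conceptual obstacle; the whole argument rests on Corollary \ref{CoroWP2P1Subset} and the explicit parametrization of the ellipses.
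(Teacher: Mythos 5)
Your proposal is correct and follows exactly the paper's argument: reduce via Corollary \ref{CoroWP2P1Subset} to bounding the ellipses $\Ellipse{\lambda}$, then use the parametrization of Remark \ref{RqEqEllipse} to check $-\frac{1}{8} \le x_\lambda(t) \le 1$ and $\abs{y_\lambda(t)} \le \frac{1}{4}$ for all $t \in \R$, $\lambda \in [0,1]$. The paper leaves these elementary optimizations implicit, while you carry them out (the substitution $u=\sqrt{\lambda}$ giving the extremum $-\frac{1}{8}$ at $\lambda=\frac{1}{4}$), so your write-up is simply a more detailed version of the same proof.
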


\begin{proof}
  Using Corollary \ref{CoroWP2P1Subset} and the parametric equation of the boundary of $\Ellipse{\lambda}$ (see Remark \ref{RqEqEllipse}), we can prove that for all $t \in \R$ and for all $\lambda \in [0,1]$, we have $ -\frac{1}{8} \le x_\lambda(t) \le 1 $ and $ -\frac{1}{4} \le y_\lambda(t) \le \frac{1}{4} $.
\end{proof}

%In particular this lemma says that
%\[
%\NumRan{P_2P_1} \subset \lbrace z \in \C, \abs{\arg(1-z)} \le \arctan(\frac{\sqrt{15}}{5})  \rbrace
%\]

%Notice that $\arctan(\frac{1}{\sqrt{3}}) =\frac{\pi}{6}$.

\begin{proof}[Proof of Proposition \ref{LemWP2P1dsSecteur}]
  Suppose that we have found $\theta_\lambda$ such that 
  $
  \Ellipse{\lambda} \subset  \lbrace z \in \C, \abs{\arg(1-z)} \le \theta_\lambda \rbrace
  $ for every $\lambda$.
  Taking $\theta = \sup \{\theta_\lambda :  \lambda \in \Spec{P_2P_1}\} ,$
  we will have that
  $$  
  \NumRan{P_2P_1} \subset \Con{\lambda \in \Spec{P_2P_1}}{\Ellipse{\lambda}} \subset  \lbrace z \in \C, \abs{\arg(1-z)} \le \theta \rbrace
  .$$
  
  First we note that $\Ellipse{0}= \lbrace 0 \rbrace$ and $\Ellipse{1}=[0,1]$. So we have $\theta_0 = \theta_1 = 0$. 
  For $\lambda \in ]0,1[$, we denote $(x_\lambda(t), y_\lambda(t))$ the parametrization of the boundary of $\Ellipse{\lambda}$ given in Remark \ref{RqEqEllipse}. We denote $\theta_\lambda(t)$ the angle between the line connecting the points 0 and 1, and the one connecting points 1 and $(x_\lambda(t), y_\lambda(t))$. We have that $\theta_\lambda = \sup_{t \in \R}\abs{\theta_\lambda(t)}$, and
  \[
    \tan(\theta_\lambda(t))
    = \frac{y_\lambda(t)}{1-x_\lambda(t)} 
    = \frac{\sqrt{\lambda(1-\lambda)}\sin(t)}{2-\lambda-\sqrt{\lambda}\cos(t)}.
  \]  
  By differentiating $\tan(\theta_\lambda(t))$, we can see that $t_0$ is a critical point if $\cos(t_0)=\frac{\sqrt{\lambda}}{2-\lambda}$. So we have that
  $$
    \tan(\theta_\lambda)
    = \frac{\sqrt{\lambda(1-\lambda)}\sqrt{1-\frac{\lambda}{(2-\lambda)^2}}}{2-\lambda-\sqrt{\lambda}\frac{\sqrt{\lambda}}{(2-\lambda)}} 
    = \frac{\sqrt{\lambda(1-\lambda)}\sqrt{(2-\lambda)^2-\lambda}}{(2-\lambda)^2-\lambda} 
    = 
    %\frac{\sqrt{\lambda(1-\lambda)}}{\sqrt{(2-\lambda)^2-\lambda}} 
%    &= \frac{\sqrt{\lambda(1-\lambda)}}{\sqrt{4-5\lambda +\lambda^2}} \\
    %= \frac{\sqrt{\lambda(1-\lambda)}}{\sqrt{(1-\lambda)(4-\lambda)}} 
     \frac{\sqrt{\lambda}}{\sqrt{4-\lambda}}.
  $$
  As $\theta = \sup_{\lambda \in \Spec{P_2P_1}} \theta_\lambda$, we get that $\tan(\theta) = \sup_{\lambda \in \Spec{P_2P_1}\setminus \lbrace 1 \rbrace} \frac{\sqrt{\lambda}}{\sqrt{4-\lambda}}$. Then we conclude 
  using Lemma \ref{LCosSp}.
%  by using that $\cos(M_1,M_2)= \sup \sqrt{\Spec{P_2P_1} \setminus \lbrace 1 \rbrace} $.
\end{proof}

\begin{remarque}
  We obtain as a consequence the result that the numerical range of a product of two orthogonal projections is included in a sector with vertex $1$ and angle $\pi/6$ (\cite{Crouzeix_2008}). Also, the result of 
  Proposition \ref{LemWP2P1dsSecteur} is sharp, in the sense that if $\theta < \arctan(\sqrt{\frac{\cos^2(M_1,M_2)}{4-\cos^2(M_1,M_2)}}) $, then $\NumRan{P_2P_1}$ is not included in $\lbrace z \in \C, \abs{\arg(1-z)} \le \theta \rbrace $.
\end{remarque}

% % % BE\GIN{REMARQUE}
% % %   IF WE KNOW $\COS(M_1,M_2)$, WE CAN IMPROVE THE RESULT. AS $\COS(M_1,M_2) = \SUP \SPEC{P_2P_1} \SETMINUS \LBRACE 1 \RBRACE $ AND $\THETA_0 = \THETA_1 = 0$, WE HAVE THAT $\SUP_{\LAMBDA \IN \SPEC{P_2P_1}} \THETA_\LAMBDA = \THETA_{\COS(M_1,M_2)}$. SO WE HAVE THAT
% % %   \BEGIN{ALIGN*}
% % %     \ADHERENCE{\NUMRAN{P_2P_1}}
% % %     &=       \ADHERENCE{\CON{\LAMBDA \IN \SPEC{P_2P_1}}{\ELLIPSE{\LAMBDA}}} \\
% % %     &\SUBSET \CUP_{{\LAMBDA \IN \SPEC{P_2P_1}}} \LBRACE Z \IN \C, \ABS{\ARG(1-Z)} \LE \THETA_\LAMBDA \RBRACE \\
% % %     &= \LBRACE Z \IN \C, \ABS{\ARG(1-Z)} \LE \THETA_{\COS^2(M_1,M_2)} \RBRACE \\
% % %     &= \LBRACE Z \IN \C, \ABS{\ARG(1-Z)} \LE \FRAC{\SQRT{\COS^2(M_1,M_2)}}{\SQRT{4-\COS^2(M_1,M_2)}} \RBRACE.
% % %   \END{ALIGN*}
% % % \END{REMARQUE}

\subsection{Some examples}

  Let $P_1,P_2$ be two orthogonal projections. The spectrum $\Spec{P_2P_1}$ is always a compact subset of $ [0,1]$. In this section, we study the following inverse spectral problem : let $K$ be a compact subset of $[0,1]$; when two orthogonal projections $P_1$ and $P_2$ exist such that $ \Spec{P_2P_1}=K $? We will show that the answer is positive if and only if $0 \in K$ or $K=\lbrace 1 \rbrace$.
  
  We start with the case $K=\lbrace 1 \rbrace$.
  
\begin{proposition}
  Let $M_1$ and $M_2$ be two subspaces of $H$.
  If $0$ does not belong to $\Spec{P_{M_2}P_{M_1}}$, then we have that $M_1=M_2=H$, $P_{M_1} = P_{M_2}= I$ and $\Spec{P_{M_2}P_{M_1}} = \lbrace 1 \rbrace$.
\end{proposition}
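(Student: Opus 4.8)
The plan is to translate the spectral hypothesis into invertibility and then exploit two structural facts: that the range of $P_{M_2}P_{M_1}$ is contained in $M_2$, and that once $M_2 = H$ the product collapses to a single projection. Concretely, the statement $0 \notin \Spec{P_{M_2}P_{M_1}}$ is equivalent to saying that $T := P_{M_2}P_{M_1}$ is invertible in $\B{H}$, and in particular $T$ is surjective onto $H$.

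First I would observe that $\Range{T} \subset M_2$, since every vector in the range of $T$ lies in the range of the outer projection $P_{M_2}$. Surjectivity of $T$ then forces $H = \Range{T} \subset M_2$, whence $M_2 = H$ and $P_{M_2} = I$. With $P_{M_2} = I$ the operator reduces to $T = P_{M_1}$, which is still invertible by hypothesis. The key observation at this point is that an invertible idempotent must be the identity: from $P_{M_1}^2 = P_{M_1}$ one multiplies on the left by $P_{M_1}^{-1}$ to obtain $P_{M_1} = I$, so $M_1 = H$. Consequently $T = I$ and $\Spec{T} = \lbrace 1 \rbrace$.

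I do not anticipate a genuine obstacle here, as the argument becomes elementary once invertibility is used. The only points needing a little care are the one-line fact that invertibility of an idempotent forces it to equal the identity, and keeping straight that it is $M_2$ (the \emph{left} projection) that gets pinned down first by the range inclusion, after which $M_1$ follows.

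An alternative route, more in the spirit of the rest of the paper, would go through the Halmos decomposition (\ref{EqDecompoPosGen}): the summands on $M_1 \cap M_2^\perp$, $M_1^\perp \cap M_2$ and $M_1^\perp \cap M_2^\perp$ contribute the zero operator, while the $\tilde{H}$-summand $\tilde{P_2}\tilde{P_1}$ has nontrivial kernel (its matrix under Halmos' theorem has a zero second column). Hence the condition $0 \notin \Spec{T}$ forces each of these subspaces to be $\lbrace 0 \rbrace$, leaving $H = M_1 \cap M_2$ and therefore $M_1 = M_2 = H$. I would present the direct argument as the main proof and mention this decomposition approach only as a remark, since it requires invoking the full two-subspace machinery to reach the same elementary conclusion.
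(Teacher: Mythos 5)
Your main argument is correct and takes a genuinely different, more elementary route than the paper. The paper proves this proposition via the five-fold orthogonal decomposition (\ref{EqDecompoPosGen}), writing $P_{M_2}P_{M_1} = I \oplus 0 \oplus 0 \oplus 0 \oplus \tilde{P_2}\tilde{P_1}$ and arguing that $0 \notin \Spec{P_{M_2}P_{M_1}}$ forces $M_1 \cap M_2^\perp = M_1^\perp \cap M_2 = M_1^\perp \cap M_2^\perp = \tilde{H} = \lbrace 0 \rbrace$, since otherwise the operator would have a nontrivial kernel; this is precisely your ``alternative route,'' and your parenthetical observation that the $\tilde{H}$-summand has nontrivial kernel because its Halmos matrix has a zero second column actually fills in a step the paper leaves implicit. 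Your primary argument avoids all of this machinery: invertibility of $T = P_{M_2}P_{M_1}$ in $\B{H}$ gives surjectivity, the inclusion $\Range{T} \subset M_2$ then forces $M_2 = H$, and the one-line fact that an invertible idempotent equals the identity (multiply $P_{M_1}^2 = P_{M_1}$ by $P_{M_1}^{-1}$) finishes with $M_1 = H$ and $\Spec{T} = \lbrace 1 \rbrace$. What your route buys is brevity and self-containedness, with no appeal to Halmos' theorem or generic position; what the paper's route buys is uniformity with the rest of the text, where the same decomposition is reused (e.g., in the proofs of Theorem \ref{CoroWP2P1} and Proposition \ref{PropIP1versP1}), together with an explicit identification of exactly which summands the hypothesis excludes. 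You are also right to emphasize the order of the deductions: the range inclusion only controls the outer factor, so $M_2$ must be pinned down before $M_1$.
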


\begin{proof}
  We decompose $H$ as in (\ref{EqDecompoPosGen}):
  \[
  H= (M_1 \cap M_2) \oplus (M_1 \cap M_2^\perp) \oplus (M_1^\perp \cap M_2) \oplus (M_1^\perp \cap M_2^\perp) \oplus \tilde{H}.
  \]
  Then 
  $
  P_{M_2}P_{M_1} = I \oplus 0 \oplus 0 \oplus 0 \oplus P_2P_1
  $.  
  As $0$ does not belong to $\Spec{P_2P_1}$, we obtain
  $
  M_1 \cap M_2^\perp = M_1^\perp \cap M_2 = M_1^\perp \cap M_2^\perp = \tilde{H} =\lbrace 0 \rbrace
  $
  (otherwise $P_{M_2}P_{M_1}$ would have a non trivial kernel).
  So we have $H=M_1 \cap M_2$ and  $M_1=M_2=H$. Therefore $P_{M_1} = P_{M_2}= I$ and $\Spec{P_{M_2}P_{M_1}} = \Spec{I} = \lbrace 1 \rbrace$. 
\end{proof}

Now, we suppose that $0 \in K$.

\begin{theoreme}
  \label{ThKP2P1}
  Let $H$ be a separable Hilbert space. Let $K$ be a compact subset of $[0,1]$ such that $0 \in K$. Then there exist two orthogonal projections $P_1, P_2$ on $H$ such that $\Spec{P_2P_1}=K$. Moreover, $P_1P_2P_1$ is diagonalisable.
\end{theoreme}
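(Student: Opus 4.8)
The plan is to realize $P_1$ and $P_2$ in the Halmos model on $H=\mathcal{K}\oplus\mathcal{K}$, for a suitable separable Hilbert space $\mathcal{K}$, and to exploit the identity (established in the proof of Lemma~\ref{LCosSp}) that $\Spec{P_2P_1}\setminus\{0\}=\Spec{P_1P_2P_1}\setminus\{0\}$. Since $P_1P_2P_1$ will be block-diagonal of the form $C^2\oplus 0$, matters reduce to producing a positive contraction $C$ on $\mathcal{K}$ with $\Spec{C^2}=K$; because $0\in K$ by hypothesis and $0$ is automatically in $\Spec{P_2P_1}$, this will give $\Spec{P_2P_1}=K$.

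Concretely, I would set $S=(I-C^2)^{1/2}$ and define
\[
P_1=\left(\begin{array}{cc} I & 0 \\ 0 & 0\end{array}\right),\qquad
P_2=\left(\begin{array}{cc} C^2 & CS \\ CS & S^2\end{array}\right),
\]
which are genuine orthogonal projections whenever $C$ and $S$ are commuting positive contractions with $C^2+S^2=I$ (the computation $P_2^2=P_2$ uses only that $C$ and $S$ commute). The delicate point, and the main obstacle, is the ``Moreover'' clause: $P_1P_2P_1=C^2\oplus 0$ must be \emph{diagonalizable} in the sense of Definition~\ref{DefDiag}, so the obvious choice of $C^2$ as multiplication by $x$ on $L_2([0,1])$---which has spectrum $[0,1]$ but, as in Example~\ref{RqExNonDiag}, no eigenvectors at all---is forbidden. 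The device that resolves this is to take $C^2$ \emph{diagonal} rather than a multiplication operator: choose a sequence $(k_n)_{n\in\N}$ dense in the compact metric space $K$, fix an orthonormal basis $(e_n)$ of $\mathcal{K}$, and put $C=\mathrm{diag}(\sqrt{k_n})$, $S=\mathrm{diag}(\sqrt{1-k_n})$. These commute, are positive contractions, and satisfy $C^2+S^2=I$.

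With this choice $P_1P_2P_1=C^2\oplus 0$ is diagonal in the orthonormal basis $\{e_n\oplus 0\}\cup\{0\oplus e_n\}$, hence diagonalizable, and the key observation is that although its eigenvalues are exactly the $k_n$ (together with $0$), its \emph{spectrum} is the closure of that set, namely $\overline{\{k_n\}}=K$. Thus a single diagonal operator can have spectrum all of $K$ while still admitting an eigenbasis, which is precisely what the diagonalizability requirement needs. It then remains to assemble the conclusions: $\Spec{P_1P_2P_1}=K\cup\{0\}=K$; by the nonzero-spectrum identity $\Spec{P_2P_1}\setminus\{0\}=K\setminus\{0\}$; and $0\in\Spec{P_2P_1}$ because $\{0\}\oplus\mathcal{K}\subset\Ker{P_2P_1}$ is nonzero. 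Combining these with $0\in K$ yields $\Spec{P_2P_1}=K$. Finally, to fit an arbitrary separable infinite-dimensional $H$, one transports the construction along any unitary identification $H\cong\mathcal{K}\oplus\mathcal{K}$.
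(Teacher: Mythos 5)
Your proposal is correct and is essentially the paper's own construction in different packaging: the paper picks a dense sequence $(\lambda_n)$ in $K$, writes $\lambda_n=\cos^2(\theta_n)$, and sets $N_1=\Span{e_{2n}, n\in\N}$, $N_2=\Span{\cos(\theta_n)e_{2n}+\sin(\theta_n)e_{2n+1}, n\in\N}$, which is exactly your Halmos model with the diagonal choices $C=\mathrm{diag}(\sqrt{k_n})$, $S=\mathrm{diag}(\sqrt{1-k_n})$, and the same two-dimensional invariant blocks making $P_1P_2P_1$ diagonalizable with $\Spec{P_2P_1}=\adherence{\lbrace k_n\rbrace}\cup\lbrace 0\rbrace=K$. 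The only (harmless) difference is that the paper reads the spectrum directly off the direct sum of $2\times 2$ blocks, while you route through the identity $\Spec{P_2P_1}\setminus\lbrace 0\rbrace=\Spec{P_1P_2P_1}\setminus\lbrace 0\rbrace$ plus the observation that $\lbrace 0\rbrace\oplus\mathcal{K}$ lies in the kernel.
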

  
\begin{proof}
  As $K$ is a compact subset of $[0,1]$, there exists a sequence  $(\lambda_n)$ in $K$ such that $\adherence{\lbrace \lambda_n, n \in \N \rbrace} = K$. For all $ n \in \N$, there exists a unique $\theta_n \in [0,\frac{\pi}{2}]$ such that $ \lambda_n = \cos(\theta_n)^2$.  
  Let $(e_n)_{n \in \N}$ be an orthonormal basis of $H$. We denote $h_n = e_{2n}$, $\tilde{f}_n= e_{2n+1}$ and $\tilde{w}_n = \cos(\theta_n) e_{2n} + \sin(\theta_n) e_{2n+1}$. Let $N_1=\Span{h_n, n \in \N } $ and $N_2=\Span{\tilde{w}_n, n \in \N} $ (see Figure \ref{FigDiag}).
  Then we have that $ P_1 h_n = h_n$, $P_1 \tilde{f}_n = 0$ and $  P_2 h_n = \cos(\theta_n)^2 h_n  +  \cos(\theta_n)\sin(\theta_n) \tilde{f}_n $, $  P_2 \tilde{f}_n = \cos(\theta_n)\sin(\theta_n) h_n  +  \sin(\theta_n)^2 \tilde{f}_n $. Hence $  P_2P_1 h_n = \cos(\theta_n)^2 h_n  +  \cos(\theta_n)\sin(\theta_n) \tilde{f}_n $ and $P_2P_1 \tilde{f}_n = 0 $. Thus we get
\begin{align*}
  P_2P_1 
    &= \bigoplus_{n \in \N} P_2P_1\mid_{\Span{h_n, \tilde{f}_n}} \\
    &= \bigoplus_{n \in \N}  \left( \begin{array}{cc}
\cos(\theta_n)^2 & 0 \\ 
\cos(\theta_n)\sin(\theta_n) & 0
\end{array} \right).
\end{align*}
Also,
$
  \Spec{P_2P_1} 
    = \adherence{\lbrace \cos(\theta_n)^2, n \in \N\rbrace \cup \lbrace 0 \rbrace } 
    = \adherence{\lbrace \lambda_n, n \in \N\rbrace \cup \lbrace 0 \rbrace } 
    = K
$.
\end{proof}

\begin{remarque}
 We have proved in the previous section that $ \adherence{\NumRan{P_2P_1}} \subset \adherence{\Con{}{\cup_{\lambda \in [0,1]}\Ellipse{\lambda}}}$. There are examples where this inclusion is an equality. According to Theorem \ref{CoroWP2P1}, we just need two projections that satisfy $\Spec{P_2P_1}=[0,1]$. The projections of Example \ref{RqExNonDiag} satisfy this condition, but $P_1P_2P_1$ is not diagonalisable. With Theorem \ref{ThKP2P1}, we can also construct an example such that $P_1P_2P_1$ is diagonalisable and $\Spec{P_2P_1}=[0,1]$.
\end{remarque}

\begin{remarque}
 As we now know all the possible shapes of $\Spec{P_2P_1} $, Theorem \ref{CoroWP2P1} gives all the possible shapes of $\adherence{\NumRan{P_2P_1}} $.
\end{remarque}

\begin{remarque}
  Using the parametrization of the boundary of $\Ellipse{\lambda}$ (see Remark \ref{RqEqEllipse}), we can prove that for all $\lambda \in [0,\frac{1}{4}]$, $\Ellipse{\lambda} \subset \Ellipse{\frac{1}{4}}$. Let $K_1=[0,\frac{1}{4}]$ and $K_2 = \lbrace 0,\frac{1}{4} \rbrace $. It follows from Theorem \ref{ThKP2P1} that there exist orthogonal projections $P_1,P_2,Q_1,Q_2$ such that $\Spec{P_2P_1}=K_1$ and $\Spec{Q_2Q_1}=K_2$. Moreover, we have that:
  \[
    \adherence{\NumRan{P_2P_1}}
      = \adherence{\Con{}{\cup_{\lambda \in [0,\frac{1}{4}]}\Ellipse{\lambda}}} 
      = \Ellipse{\frac{1}{4} } 
      = \adherence{\Con{}{\Ellipse{0} \cup \Ellipse{\frac{1}{4}}}} 
      = \adherence{\NumRan{Q_2Q_1}}.
  \]
  This shows that the points of the spectrum of $P_2P_1$ which are less that $\frac{1}{4}$ are not uniquely determined by the numerical range. We will see in the next section that the situation is different for spectral values greater than $\frac{1}{4}$.
\end{remarque}

\section{The spectrum of $P_2P_1$ in terms of the numerical range}

\subsection{The relationship between the spectral and numerical radii}

%\begin{definition}
%  We denote $ \NumRay{T} $ the numerical radius of $T \in \B{H}$ , and it is defined by the following formula:
%  \[
%  \NumRay{T} = sup\lbrace \abs{w}, w \in \NumRan{T} \rbrace
%  \]
%\end{definition}

In this section, we will prove proposition \ref{PropLinkRaySpecNumRan}, and compare this result with an inequality from \cite{Kittaneh_2003}.

\begin{proof}[Proof of Proposition \ref{PropLinkRaySpecNumRan}]
  If $M_1=M_2=H$, this is true. Now we suppose that $M_1\ne H$ or $M_2 \ne H$.   By combining the definition of the numerical radius with the Theorem \ref{CoroWP2P1}, we obtain:
\[
  \NumRay{P_2P_1}
    %= \sup_{w \in \NumRan{P_2P_1}} \abs{w} 
    = \sup_{w \in \adherence{\NumRan{P_2P_1}}} \abs{w} 
    = \sup_{w \in \Ellipse{\lambda}, \lambda \in \Spec{P_2P_1}} \abs{w} .
\]

  First, we compute $\sup_{w \in \Ellipse{\lambda}} \abs{w}$ for a fixed $\lambda$. 
  %Then we would have to take the maximal value for all the $\lambda \in \Spec{P_2P_1}$.  
  We denote by $(x_\lambda(t), y_\lambda(t))$ the parametrization of the boundary of $\Ellipse{\lambda}$ given in Remark \ref{RqEqEllipse}. We have 
  $
  \sup_{w \in \Ellipse{\lambda}} \abs{w} = \sup_{t \in \R} \sqrt{x_\lambda(t)^2 + y_\lambda(t)^2}
  $
  and
$
  x_\lambda(t)^2 + y_\lambda(t)^2 = \frac{1}{4} ( \lambda^2 \cos(t)^2 + 2\lambda\sqrt{\lambda}\cos(t) + \lambda ) 
$.
Therefore
\[
  \sup_{w \in \Ellipse{\lambda}} \abs{w}
    = \sqrt{ \frac{1}{4} \left( \lambda^2 + 2\lambda \sqrt{\lambda} + \lambda \right) } 
    = \frac{1}{2}(\lambda + \sqrt{\lambda} ).
\]
Finally, 
\[
  \NumRay{P_2P_1}
    = \sup_{w \in \Ellipse{\lambda}, \lambda \in \Spec{P_2P_1}} \abs{w} \\
    = \sup_{\lambda \in \Spec{P_2P_1}} \frac{1}{2}(\lambda + \sqrt{\lambda} ) \\
    =  \frac{1}{2}(\RaySpec{P_2P_1} + \sqrt{\RaySpec{P_2P_1}} ).
\]
\end{proof}

\begin{remarque}
  In \cite{Kittaneh_2003}, Kittaneh proved that for any operator $T$, we have the following inequality:
  \begin{equation}
    \label{EqKittaneh}
    \NumRay{T} \le \frac{1}{2} \left( \norme{T} + \norme{T^2}^{\frac{1}{2}} \right).
  \end{equation}
  Let us compare Proposition \ref{PropLinkRaySpecNumRan} with Kittaneh's inequality when $T=P_2P_1$. If $M_1 \cap M_2 \ne \lbrace 0 \rbrace$, then 1 is eigenvalue of $P_2P_1$. So $\norme{P_2P_1}= \norme{(P_2P_1)^2} = 1$, $\RaySpec{P_2P_1}=1$ and $\NumRay{P_2P_1}= 1 $. Thus $\NumRay{P_2P_1} = \frac{1}{2} (\sqrt{\RaySpec{P_2P_1}} + \RaySpec{P_2P_1}) = \frac{1}{2} (\norme{P_2P_1} + \norme{(P_2P_1)^2}^{\frac{1}{2}})$ and in this case, (\ref{EqKittaneh}) is an equality.
  
  If $M_1 \cap M_2 = \lbrace 0 \rbrace$, then according to \cite{Kaylar_Weinert_1988, Deustch_2001} we 
  have $\norme{(P_2P_1)^n}= \cos(M_1,M_2)^{2n-1} $ and $\norme{P_1P_2P_1}=  \cos(M_1,M_2)^2 = \RaySpec{P_1P_2P_1} = \RaySpec{P_2P_1} $. So we have
$
  \NumRay{P_2P_1}
    = \frac{1}{2} (\sqrt{\RaySpec{P_2P_1}} + \RaySpec{P_2P_1}) 
    = \frac{1}{2} (\cos(M_1,M_2) + \cos(M_1,M_2)^2)
$
  and also
$
   \frac{1}{2} (\norme{P_2P_1} + \norme{(P_2P_1)^2}^{\frac{1}{2}})
   =  \frac{1}{2} (\cos(M_1,M_2) + \cos(M_1,M_2)^{\frac{3}{2}})
$.
  If $\cos(M_1,M_2)<1 $, then 
  $
  \NumRay{P_2P_1} < \frac{1}{2} \left(\norme{P_2P_1} + \norme{(P_2P_1)^2}^{\frac{1}{2}}\right)
  $.
So in this case, (\ref{EqKittaneh}) is a strict inequality.
\end{remarque}

\subsection{How to find $\Spec{P_2P_1}$ from $\adherence{\NumRan{P_2P_1}}$ (and $\adherence{\NumRan{P_2(I-P_1)}}$)}

  Contrarily to Sections 2.1 and 2.2, where we have described  $\adherence{\NumRan{P_2P_1}}$ in terms of $\Spec{P_2P_1}$, the aim of this section is to obtain information about the spectrum of $P_2P_1$ from its numerical range. We give an informal idea about how we do this. Denote $g_\alpha(\lambda) = \frac{1}{2}(\cos(\alpha)\lambda + \sqrt{\lambda(1-\sin(\alpha)^2\lambda)})$, then we have $\FctionSupp{\NumRan{P_2P_1}}{\alpha} = \sup_{\lambda \in \Spec{P_2P_1}} g_\alpha(\lambda)$. We will use the support function as a tool to identify if the ellipse $\Ellipse{\lambda}$ is in the numerical range. If this is the case, then $\lambda$ will be in the spectrum.
Denote by $ \mathscr{S}$ the closure of $\Con{\lambda \in [0,1]}{\Ellipse{\lambda}} $. By Corollary \ref{CoroWP2P1Subset}, we have $\NumRan{P_2P_1} \subset \mathscr{S} $, so $ \sup_{\lambda \in \Spec{P_2P_1}} g_\alpha(\lambda) = \FctionSupp{\NumRan{P_2P_1}}{\alpha} \le \FctionSupp{\mathscr{S}}{\alpha} = \sup_{\lambda \in [0,1]} g_\alpha(\lambda)$. Using the continuity of the function $g_\alpha(\cdot)$ and the compacity of $\Spec{P_2P_1}$, we get the existence of a point $\lambda_0 \in \Spec{P_2P_1}$ such that $\FctionSupp{\NumRan{P_2P_1}}{\alpha} = g_\alpha(\lambda_0)$. With this information we are able to find an explicit formula for $ \FctionSupp{\mathscr{S}}{\alpha}$. Moreover, we will see that the equality $ \FctionSupp{\NumRan{P_2P_1}}{\alpha} = \FctionSupp{\mathscr{S}}{\alpha} $ is equivalent to the presence of a unique point $\lambda_0$ (depending only on $\alpha$) in the spectrum of $P_2P_1$. 

%  
%  
%  The main idea is to use Theorem \ref{CoroWP2P1} this way: if the shape of $\adherence{\NumRan{P_2P_1}}$ suggests that $\Ellipse{\lambda}$ must be in it, then $\lambda$ must be in the spectrum.
%  
%  If we denote $g_\alpha(\lambda) = \frac{1}{2}(\cos(\alpha)\lambda + \sqrt{\lambda(1-\sin(\alpha)^2\lambda)})$, then we have that $\FctionSupp{\NumRan{P_2P_1}}{\alpha} = \sup_{\lambda \in \Spec{P_2P_1}} g_\alpha(\lambda)$. We will use the support function as a tool to measure if the ellipses $\Ellipse{\lambda}$ are in the numerical range. Here we denote $ \mathscr{S} = \adherence{\Con{\lambda \in [0,1]}{\Ellipse{\lambda}}} $. By Corollary \ref{CoroWP2P1Subset}, $\NumRan{P_2P_1} \subset \mathscr{S} $, so $ \sup_{\lambda \in \Spec{P_2P_1}} g_\alpha(\lambda) = \FctionSupp{\NumRan{P_2P_1}}{\alpha} \le \FctionSupp{\mathscr{S}}{\alpha} = \sup_{\lambda \in [0,1]} g_\alpha(\lambda)$. One of the mains ideas of the proof is to use that  $g_\alpha(\lambda)$ is a continuous function, and $\Spec{P_2P_1}$ is compact. So there exists a $\lambda_0 \in \Spec{P_2P_1}$ such that $\FctionSupp{\NumRan{P_2P_1}}{\alpha} = g_\alpha(\lambda_0)$. With this we can find an explicit formula for $ \FctionSupp{\mathscr{S}}{\alpha}$. Moreover we will see that the equality $ \FctionSupp{\NumRan{P_2P_1}}{\alpha} = \FctionSupp{\mathscr{S}}{\alpha} $ is equivalent to the existence of a unique $\lambda_0$ (depending only on $\alpha$) in the spectrum of $P_2P_1$. 

  We begin by giving a necessary and sufficient condition such that $\lambda$ is a critical point of $ g_\alpha(\lambda) $.
\begin{lemme}
  \label{LemCNSptcritiq}
  Let $\lambda_0 \in ]0,1[$ and $\alpha \in ]0,\pi[ $. Then $\lambda_0$ is a critical point for $ g_\alpha $ if and only if we have:
  \[
  \alpha = 2 \arcsin(\sqrt{1-\lambda_0\sin(\alpha)^2}).
  \]
\end{lemme}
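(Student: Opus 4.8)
The plan is to compute the derivative $g_\alpha'(\lambda)$ directly and then to recognize the resulting critical-point equation as a disguised double-angle identity. First I would write $g_\alpha(\lambda) = \frac{1}{2}\cos(\alpha)\lambda + \frac{1}{2}\sqrt{\lambda(1-\sin(\alpha)^2\lambda)}$ and note that on $\lambda \in\,]0,1[$ the radicand $\lambda(1-\sin(\alpha)^2\lambda)$ is strictly positive (since $\sin(\alpha)^2\lambda<1$), so that $g_\alpha$ is smooth there. Differentiating yields
\[
g_\alpha'(\lambda) = \frac{1}{2}\cos(\alpha) + \frac{1-2\sin(\alpha)^2\lambda}{4\sqrt{\lambda(1-\sin(\alpha)^2\lambda)}},
\]
and, after multiplying through by the strictly positive factor $4\sqrt{\lambda(1-\sin(\alpha)^2\lambda)}$, the condition $g_\alpha'(\lambda_0)=0$ becomes equivalent to
\[
2\cos(\alpha)\sqrt{\lambda_0(1-\sin(\alpha)^2\lambda_0)} = 2\sin(\alpha)^2\lambda_0 - 1.
\]

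Next I would introduce $\beta = \arcsin(\sqrt{1-\lambda_0\sin(\alpha)^2}) \in\,]0,\tfrac{\pi}{2}]$, which by construction satisfies $\sin(\beta)^2 = 1-\lambda_0\sin(\alpha)^2$ and $\cos(\beta) = \sqrt{\lambda_0}\,\sin(\alpha)\ge 0$ (using $\alpha\in\,]0,\pi[$, so $\sin(\alpha)>0$). The point is that the two sides of the critical-point equation are exactly the two double-angle expressions associated to $\beta$: a short computation gives $\cos(2\beta)=1-2\sin(\beta)^2=2\lambda_0\sin(\alpha)^2-1$ and $\sin(2\beta)=2\sin(\beta)\cos(\beta)=2\sin(\alpha)\sqrt{\lambda_0(1-\lambda_0\sin(\alpha)^2)}$. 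Substituting $\sqrt{\lambda_0(1-\sin(\alpha)^2\lambda_0)} = \sin(2\beta)/(2\sin(\alpha))$ and $2\sin(\alpha)^2\lambda_0-1 = \cos(2\beta)$ transforms the equation into $\cos(\alpha)\sin(2\beta)=\sin(\alpha)\cos(2\beta)$, that is
\[
\sin(2\beta-\alpha)=0.
\]
Every step here is an equivalence (multiplication by strictly positive factors, and substitution of identities), so no spurious roots are created or lost.

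Finally I would close the argument with a range analysis, which is the only delicate point. Since $\alpha\in\,]0,\pi[$ and $\beta\in\,]0,\tfrac{\pi}{2}]$, we have $2\beta\in\,]0,\pi]$ and hence $2\beta-\alpha\in\,]-\pi,\pi[$; on this interval the sine vanishes only at the origin, forcing $2\beta=\alpha$, i.e. $\alpha = 2\arcsin(\sqrt{1-\lambda_0\sin(\alpha)^2})$. Reading the chain backwards gives the converse implication. The main obstacle is therefore not the calculus but ensuring that the branch of $\arcsin$ together with the admissible ranges of $\alpha$ and $\beta$ isolate the unique solution $2\beta=\alpha$ among the zeros of $\sin(2\beta-\alpha)$; the hypotheses $\lambda_0\in\,]0,1[$ and $\alpha\in\,]0,\pi[$ are precisely what guarantees both that the square roots are positive (so that the manipulations are genuine equivalences) and that the range of $2\beta-\alpha$ excludes the extraneous zeros at $\pm\pi$.
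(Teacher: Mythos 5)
Your proof is correct and follows essentially the same route as the paper: both compute the same derivative, reduce $g_\alpha'(\lambda_0)=0$ to an algebraic equation, and resolve it via the substitution $\sin(\beta)=\sqrt{1-\lambda_0\sin(\alpha)^2}$ (your $\beta$ is exactly the paper's $\gamma$, reached through $X=\sqrt{1-\sin(\alpha)^2\lambda}$) together with a double-angle identity and a range analysis pinning down $\alpha=2\beta$. The only cosmetic difference is that you conclude from $\sin(2\beta-\alpha)=0$ on $]-\pi,\pi[$ where the paper writes $\cot(\alpha)=\cot(2\gamma)$, your form being marginally cleaner since it avoids the points where the cotangent is undefined.
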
 

\begin{proof}
  We have
  $
  g'_\alpha(\lambda) = \frac{1}{2} ( \cos(\alpha) + \frac{1-2\lambda \sin(\alpha)^2}{2\sqrt{\lambda(1-\sin(\alpha)^2\lambda)}} )
  $.
Thus $g'_\alpha(\lambda) =0$ if and only if
  $
  \sqrt{\lambda}\cos(\alpha) = \frac{1}{2\sqrt{1-\sin(\alpha)^2\lambda}} - \sqrt{1-\lambda\sin(\alpha)^2}
  $. 
Denoting $X = \sqrt{1-\sin(\alpha)^2\lambda}$, we have that $g'_\alpha(\lambda) =0$ if and only if
  $
  \sqrt{\frac{1-X^2}{\sin(\alpha)^2}}\cos(\alpha) = \frac{1}{2X} - X 
  $,
or, equivalently, if and only if 
  $
  \cot(\alpha) = \frac{1-2X^2}{2X\sqrt{1-X^2}}
  $.
We denote $X=\sin(\gamma)$ and get that
$
  \cot(\alpha) 
    = \frac{1-2\sin(\gamma)^2}{2\sin(\gamma)\sqrt{1-\sin(\gamma)^2}} 
    = \cot(2\gamma)
$.
  As $\lambda \in [0,1]$, we have that $X \in [\abs{\cos(\alpha)},1]$, and $\gamma \in [\arcsin(\abs{\cos(\alpha)}),\frac{\pi}{2} ] \subset [0,\frac{\pi}{2}]$. So $2\gamma \in [0,\pi]$.
  Therefore  $g'_\alpha(\lambda) =0$ if and only if $\alpha = 2\gamma$, if and only if $\alpha = 2 \arcsin(\sqrt{1-\lambda_0\sin(\alpha)^2}) $.
\end{proof}  
  
  The next corollary says that the support functions of $\NumRan{P_2P_1}$ for $\alpha \in ]0,\frac{\pi}{3}]$ do not give us useful information about $\Spec{P_2P_1}$.
  
\begin{corollaire}
  \label{CoroPasPtCritiq}
  If $\alpha \in [0,\frac{\pi}{3}]$, and $\lambda_0 \in ]0,1[$, then $\lambda_0$ is not a critical point of $g_\alpha$.
\end{corollaire}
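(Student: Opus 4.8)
The plan is to reduce everything to the critical-point criterion already established in Lemma \ref{LemCNSptcritiq} and then to show that its defining equation simply has no solution $\lambda_0 \in\; ]0,1[$ once $\alpha$ is small enough. Note first that Lemma \ref{LemCNSptcritiq} is stated only for $\alpha \in\; ]0,\pi[$, so the endpoint $\alpha = 0$ must be dispatched by hand. Here I would compute directly: for $\alpha = 0$ one has $g_0(\lambda) = \frac{1}{2}(\lambda + \sqrt{\lambda})$, whose derivative $\frac{1}{2}\bigl(1 + \frac{1}{2\sqrt{\lambda}}\bigr)$ is strictly positive on $]0,1[$, so no interior critical point can occur.

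For $\alpha \in\; ]0,\frac{\pi}{3}]$ I would invoke Lemma \ref{LemCNSptcritiq}, according to which $\lambda_0 \in\; ]0,1[$ is critical for $g_\alpha$ if and only if $\alpha = 2\arcsin\bigl(\sqrt{1-\lambda_0\sin(\alpha)^2}\bigr)$. Rather than solving this equation, I would bound its right-hand side from below and show it always strictly exceeds $\alpha$. Since $\lambda_0 < 1$, we get $1 - \lambda_0\sin(\alpha)^2 > 1 - \sin(\alpha)^2 = \cos(\alpha)^2$, and because $\alpha \in [0,\frac{\pi}{2}]$ makes $\cos(\alpha) \ge 0$, this gives the strict inequality $\sqrt{1-\lambda_0\sin(\alpha)^2} > \cos(\alpha)$. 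Monotonicity of $\arcsin$ together with the identity $\arcsin(\cos(\alpha)) = \frac{\pi}{2} - \alpha$ (valid for $\alpha \in [0,\frac{\pi}{2}]$) then yields $2\arcsin\bigl(\sqrt{1-\lambda_0\sin(\alpha)^2}\bigr) > \pi - 2\alpha$.

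It then remains only to compare $\pi - 2\alpha$ with $\alpha$: the inequality $\pi - 2\alpha \ge \alpha$ is exactly $\alpha \le \frac{\pi}{3}$, which is precisely our hypothesis. Chaining the two estimates gives $2\arcsin\bigl(\sqrt{1-\lambda_0\sin(\alpha)^2}\bigr) > \alpha$, so the equality of Lemma \ref{LemCNSptcritiq} fails and $\lambda_0$ is not a critical point. The one point that deserves care — and which I regard as the only real obstacle — is tracking strict versus non-strict inequalities: the threshold angle $\frac{\pi}{3}$ is borderline, since there $\pi - 2\alpha = \alpha$, so the argument only survives at $\alpha = \frac{\pi}{3}$ because the hypothesis $\lambda_0 < 1$ is strict and feeds a strict inequality into the chain. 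This explains why the statement can allow the closed endpoint $\frac{\pi}{3}$ for $\alpha$ while insisting on the open interval $]0,1[$ for $\lambda_0$. As an aside, solving the critical-point equation explicitly (write $\sqrt{1-\lambda_0\sin(\alpha)^2} = \sin(\frac{\alpha}{2})$, square, and use half-angle formulas) gives $\lambda_0 = \frac{1}{2(1-\cos(\alpha))}$, which is $\ge 1$ exactly when $\cos(\alpha) \ge \frac{1}{2}$, i.e. when $\alpha \le \frac{\pi}{3}$; this recovers the same threshold and connects directly with the value appearing in Theorem \ref{ThLienSpecWP2P1}.
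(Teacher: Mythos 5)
Your proof is correct and takes essentially the same route as the paper: both reduce to Lemma \ref{LemCNSptcritiq} and show that $2\arcsin\bigl(\sqrt{1-\lambda_0\sin(\alpha)^2}\bigr) > \alpha$ whenever $\alpha \le \frac{\pi}{3}$, the paper via the lower bound $2\arcsin(\abs{\cos(\alpha)})$, which on $[0,\frac{\pi}{2}]$ is exactly your $\pi - 2\alpha$. If anything, your write-up is more careful than the paper's at two points: you dispatch $\alpha = 0$ separately (the lemma is stated only for $\alpha \in \, ]0,\pi[$, a hypothesis the paper silently ignores), and you correctly locate the source of strictness at the borderline angle $\alpha = \frac{\pi}{3}$ in the hypothesis $\lambda_0 < 1$, whereas the paper's final sentence claims $\alpha < 2\arcsin(\abs{\cos(\alpha)})$ for all $\alpha \in [0,\frac{\pi}{3}]$, which fails (with equality) at the endpoint it has just computed.
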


\begin{proof}
  We just need to check that Lemma \ref{LemCNSptcritiq} fails in this case.
  If $\lambda_0 \in ]0,1[$, then $2\arcsin(\sqrt{1-\lambda_0\sin(\alpha)^2}) \in ]\arcsin(\abs{\cos(\alpha)}) ,\pi[$. If $\alpha$ satisfies the condition of Lemma \ref{LemCNSptcritiq}, then $\alpha \in ]\arcsin(\abs{\cos(\alpha)}) ,\pi[$.
%  If  $\alpha \in [0,\pi]$, then we have $ \alpha \le \pi $. 
  We want to know when we have $\alpha = 2\arcsin(\abs{\cos(\alpha)}) $.   
  If $\alpha = 2\arcsin(\abs{\cos(\alpha)}) $, using some trigonometric formulas, we get that $\sin(\alpha)=2 \abs{\cos(\alpha)} \sin(\alpha) $.
  So $\abs{\cos(\alpha)} = \frac{1}{2} $. If $\alpha = \frac{\pi}{3}$, then $2\arcsin(\abs{\cos(\alpha)}) = \frac{\pi}{3}= \alpha$. If $\alpha = \frac{2\pi}{3}$, then $2\arcsin(\abs{\cos(\alpha)}) = \frac{\pi}{3} \neq \alpha$. In other words, $\alpha = 2\arcsin(\abs{\cos(\alpha)}) $ if and only if $\alpha = \frac{\pi}{3}$. Moreover, if $\alpha \in [0,\frac{\pi}{3}]$, then we have $\alpha < 2\arcsin(\abs{\cos(\alpha)}) $, so $g_\alpha$ has no critical point on $]0,1[$.
\end{proof}

The following proposition says that $\FctionSupp{\NumRan{P_2P_1}}{\alpha}$ can give information on $\Spec{P_2P_1}$ if $\alpha \in [\frac{\pi}{3},\pi]$.

\begin{proposition}
  \label{PropPtCritiq}
  If $\alpha \in [\frac{\pi}{3},\pi]$, then the only critical point of $g_\alpha$ is $\lambda_\alpha = \frac{1+\cos(\alpha)}{2\sin(\alpha)^2}$.
\end{proposition}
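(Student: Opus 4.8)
The plan is to solve explicitly the critical-point equation supplied by Lemma \ref{LemCNSptcritiq}. For $\alpha \in \,]\frac{\pi}{3},\pi[$, that lemma tells us that $\lambda_0 \in \,]0,1[$ is a critical point of $g_\alpha$ if and only if $\alpha = 2\arcsin(\sqrt{1-\lambda_0\sin(\alpha)^2})$. First I would divide by $2$ and invert the arcsine: since $\alpha \in \,]\frac{\pi}{3},\pi[$ forces $\frac{\alpha}{2}\in\,]\frac{\pi}{6},\frac{\pi}{2}[$, which is contained in the range $[0,\frac{\pi}{2}]$ on which $\sin$ is injective, the equation is \emph{equivalent} to $\sin(\frac{\alpha}{2}) = \sqrt{1-\lambda_0\sin(\alpha)^2}$. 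No spurious branch is lost or gained here, and this is the delicate point of the argument.

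Next I would square both sides (legitimate, as both are nonnegative) to get $\sin(\frac{\alpha}{2})^2 = 1-\lambda_0\sin(\alpha)^2$, and then solve for $\lambda_0$, obtaining
\[
\lambda_0 = \frac{1-\sin(\frac{\alpha}{2})^2}{\sin(\alpha)^2} = \frac{\cos(\frac{\alpha}{2})^2}{\sin(\alpha)^2}.
\]
Applying the half-angle identity $\cos(\frac{\alpha}{2})^2 = \frac{1+\cos(\alpha)}{2}$ yields $\lambda_0 = \frac{1+\cos(\alpha)}{2\sin(\alpha)^2} = \lambda_\alpha$. Because every manipulation above is reversible, $\lambda_\alpha$ is the unique value meeting the condition of Lemma \ref{LemCNSptcritiq}, hence the only critical point of $g_\alpha$ in $]0,1[$.

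It then remains to verify that $\lambda_\alpha$ genuinely lies in the admissible range and to dispatch the endpoints. Writing $\sin(\alpha)^2 = (1-\cos(\alpha))(1+\cos(\alpha))$ simplifies the formula to $\lambda_\alpha = \frac{1}{2(1-\cos(\alpha))}$, which decreases from $1$ at $\alpha=\frac{\pi}{3}$ to $\frac{1}{4}$ at $\alpha=\pi$; thus $\lambda_\alpha \in \,]0,1[$ throughout the open interval, consistent with Corollary \ref{CoroPasPtCritiq}, where at $\alpha=\frac{\pi}{3}$ the value $\lambda_\alpha=1$ already leaves the interior. Finally, the case $\alpha=\pi$ is not covered by Lemma \ref{LemCNSptcritiq}, so I would treat it by hand: there $g_\pi(\lambda)=\frac{1}{2}(\sqrt{\lambda}-\lambda)$, whose derivative $\frac{1}{2}(\frac{1}{2\sqrt{\lambda}}-1)$ vanishes only at $\lambda=\frac{1}{4}$, in agreement with the formula. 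The main obstacle is nothing more than bookkeeping around the inversion of $\arcsin$ and the squaring step; once the range computation $\frac{\alpha}{2}\in\,]\frac{\pi}{6},\frac{\pi}{2}[$ guarantees these are equivalences, the rest is routine trigonometry.
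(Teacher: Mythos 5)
Your proof is correct, and it reaches the conclusion by a cleaner algebraic route than the paper's. Both arguments start from Lemma \ref{LemCNSptcritiq}, but the paper composes the equation $\alpha = 2\arcsin(\sqrt{1-\lambda\sin(\alpha)^2})$ with $\sin$, divides by $\sin(\alpha)$, and squares, arriving at the quadratic $4\lambda^2\sin(\alpha)^2-4\lambda+1=0$ with the two roots $\frac{1\pm\cos(\alpha)}{2\sin(\alpha)^2}$; since composing with $\sin$ is not injective, the paper must then substitute each root back into the criterion of Lemma \ref{LemCNSptcritiq} to discard the spurious one ($2\arcsin(\cos(\frac{\alpha}{2}))\neq\alpha$) and confirm the genuine one ($2\arcsin(\sin(\frac{\alpha}{2}))=\alpha$). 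You instead invert the arcsine on its principal branch: since $\frac{\alpha}{2}\in[0,\frac{\pi}{2}]$ and $\arcsin$ is a bijection from $[0,1]$ onto $[0,\frac{\pi}{2}]$, the equation is \emph{equivalent} to $\sin(\frac{\alpha}{2})=\sqrt{1-\lambda_0\sin(\alpha)^2}$, and squaring two nonnegative quantities keeps the equivalence, so the unique solution $\lambda_\alpha$ drops out with no spurious root and no verification step. What each approach buys: the paper's is a mechanical solve-and-check; yours is a chain of equivalences, which makes uniqueness manifest rather than established by elimination. You are also more careful than the paper at the endpoints: Lemma \ref{LemCNSptcritiq} is stated only for $\alpha\in\,]0,\pi[$ and $\lambda_0\in\,]0,1[$, and at $\alpha=\pi$ the displayed formula $\frac{1+\cos(\alpha)}{2\sin(\alpha)^2}$ is literally $0/0$; your direct computation $g_\pi(\lambda)=\frac{1}{2}(\sqrt{\lambda}-\lambda)$ with critical point $\lambda=\frac{1}{4}=\frac{1}{2(1-\cos(\pi))}$ closes that gap, which the paper leaves implicit, and your remark that $\lambda_{\pi/3}=1$ sits on the boundary correctly squares the proposition with Corollary \ref{CoroPasPtCritiq}. (For completeness you could verify $g'_{\pi/3}(1)=0$ by hand, but this is no less rigorous than the published proof.)
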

 
\begin{proof}
  From Lemma \ref{LemCNSptcritiq}, we know that $\lambda$ is a critical point of $g_\alpha$ if and only if $\alpha = 2 \arcsin(\sqrt{1-\lambda\sin(\alpha)^2})$. Compose with sinus on each side of the equality and use some trigonometric formulas to get that $\sin(\alpha) =2 \sqrt{1-\lambda\sin(\alpha)^2}\sqrt{\lambda}\sin(\alpha) $.  
  Dividing each side by $\sin(\alpha)$ and raising to the square, we get that $4\lambda^2\sin(\alpha)^2 - 4\lambda + 1 = 0 $. Therefore if $\lambda$ is a critical point of $g_\alpha$, then $\lambda = \frac{1+\cos(\alpha)}{2\sin(\alpha)^2}$ or $\lambda = \frac{1-\cos(\alpha)}{2\sin(\alpha)^2}$.
  If  $\lambda = \frac{1-\cos(\alpha)}{2\sin(\alpha)^2}$, then
\begin{align*}
  2 \arcsin(\sqrt{1-\lambda\sin(\alpha)^2})
    &= 2 \arcsin(\sqrt{\frac{1}{2}(1 + \cos(\alpha))}) \\
    &= 2 \arcsin(\cos(\frac{\alpha}{2})) \\
    &\ne \alpha . 
\end{align*}
  Lemma \ref{LemCNSptcritiq} says that $\lambda$ is not a critical point of $g_\alpha$.  
  If $\lambda = \frac{1+\cos(\alpha)}{2\sin(\alpha)^2}$, then
\begin{align*}
  2 \arcsin(\sqrt{1-\lambda\sin(\alpha)^2})
    &= 2 \arcsin(\sqrt{\frac{1}{2}(1 - \cos(\alpha))}) \\
    &= 2 \arcsin(\sin(\frac{\alpha}{2})) \\
    &= \alpha. 
\end{align*}
  According to Lemma \ref{LemCNSptcritiq}, $\lambda$ is a critical point of $g_\alpha$.   
\end{proof} 

\begin{remarque}
  The condition  $\alpha \in [\frac{\pi}{3},\pi]$ ensures that $\lambda_\alpha \in [0,1]$. We remark that
\[
  \lambda_\alpha
    = \frac{1+\cos(\alpha)}{2\sin(\alpha)^2}
    = \frac{1}{2(1-\cos(\alpha))}. 
\] 
  If we have $ \frac{\pi}{3} \le \alpha \le \pi $, then $\frac{1}{4} \le \frac{1}{2(1 - \cos(\alpha))} \le 1$.
  So $\lambda_\alpha \in [\frac{1}{4},1]$.
\end{remarque}

  We give now an explicit formula for $\FctionSupp{\mathscr{S}}{\alpha}$.

\begin{corollaire}
  The support function of $ \mathscr{S} = \adherence{\Con{}{\cup_{\lambda \in [0,1]}\Ellipse{\lambda}}} $ is given by the following formula:
  $$
  \FctionSupp{\mathscr{S}}{\alpha}
    =\left\lbrace \begin{matrix}
      \cos(\alpha) & \textrm{if } \alpha \in [0,\frac{\pi}{3}] \\
      \frac{1}{4(1-\cos(\alpha))} & \textrm{if } \alpha \in [\frac{\pi}{3},\pi] \\
    \end{matrix} \right. .  
  $$
\end{corollaire}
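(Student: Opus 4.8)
The plan is to reduce the computation of $\FctionSupp{\mathscr{S}}{\alpha}$ to an elementary one-variable maximization. Since a support function is insensitive to taking convex hulls and closures, and since the support function of each $\Ellipse{\lambda}$ was computed above to be $g_\alpha(\lambda) = \frac{1}{2}(\cos(\alpha)\lambda + \sqrt{\lambda(1-\sin(\alpha)^2\lambda)})$, one has
\[
\FctionSupp{\mathscr{S}}{\alpha} = \sup_{\lambda \in [0,1]} \FctionSupp{\Ellipse{\lambda}}{\alpha} = \sup_{\lambda \in [0,1]} g_\alpha(\lambda).
\]
Because $\mathscr{S}$ is symmetric with respect to the real axis, $\FctionSupp{\mathscr{S}}{\alpha} = \FctionSupp{\mathscr{S}}{-\alpha}$, so it suffices to treat $\alpha \in [0,\pi]$, which is exactly the range in which the critical-point analysis above was carried out.

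For $\alpha \in [0,\frac{\pi}{3}]$, Corollary \ref{CoroPasPtCritiq} guarantees that $g_\alpha$ has no critical point in $]0,1[$; being continuous on $[0,1]$, it therefore attains its maximum at an endpoint. Since $g_\alpha(0) = 0$ and $g_\alpha(1) = \frac{1}{2}(\cos(\alpha) + \abs{\cos(\alpha)}) = \cos(\alpha) \ge 0$ on this range, the maximum is $\cos(\alpha)$, which gives the first line of the formula.

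For $\alpha \in [\frac{\pi}{3},\pi]$, Proposition \ref{PropPtCritiq} tells us that the unique interior critical point is $\lambda_\alpha = \frac{1}{2(1-\cos(\alpha))} \in [\frac{1}{4},1]$. To see this is a maximum rather than a minimum, I would note that $g_\alpha'(\lambda) \to +\infty$ as $\lambda \to 0^+$, so $g_\alpha$ is increasing near $0$ and hence rises to its unique critical value before (possibly) decreasing. It then remains to evaluate $g_\alpha(\lambda_\alpha)$: using $\sin(\alpha)^2 = (1-\cos(\alpha))(1+\cos(\alpha))$ one finds $1 - \sin(\alpha)^2\lambda_\alpha = \frac{1-\cos(\alpha)}{2}$, whence $\lambda_\alpha(1-\sin(\alpha)^2\lambda_\alpha) = \frac{1}{4}$ and
\[
g_\alpha(\lambda_\alpha) = \frac{1}{2}\left( \frac{\cos(\alpha)}{2(1-\cos(\alpha))} + \frac{1}{2} \right) = \frac{1}{4(1-\cos(\alpha))},
\]
which is the second line.

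The only genuinely delicate point is confirming that the interior critical point in the range $[\frac{\pi}{3},\pi]$ is a maximum and dominates both endpoint values; the sign analysis of $g_\alpha'$ near $0$ settles this. One also checks that the two formulas agree at $\alpha = \frac{\pi}{3}$ (where $\lambda_\alpha = 1$ and both give $\frac{1}{2}$), so the piecewise definition is consistent. The remaining steps are routine trigonometric simplifications.
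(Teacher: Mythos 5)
Your route is the same as the paper's: write $\FctionSupp{\mathscr{S}}{\alpha}=\sup_{\lambda\in[0,1]}g_\alpha(\lambda)$, invoke Corollary \ref{CoroPasPtCritiq} on $[0,\frac{\pi}{3}]$ and Proposition \ref{PropPtCritiq} on $[\frac{\pi}{3},\pi]$, and evaluate $g_\alpha$ at $0$, $1$ and $\lambda_\alpha$; your computation $g_\alpha(\lambda_\alpha)=\frac{1}{4(1-\cos(\alpha))}$ is correct, as is the endpoint analysis on $[0,\frac{\pi}{3}]$. But there is a genuine gap at exactly the point you yourself flag as delicate. Knowing that $g_\alpha'(\lambda)\to+\infty$ as $\lambda\to 0^+$ and that $\lambda_\alpha$ is the \emph{unique} interior critical point only tells you that $g_\alpha$ increases on $(0,\lambda_\alpha)$, hence that $g_\alpha(\lambda_\alpha)\ge g_\alpha(0)=0$ and that $\lambda_\alpha$ is not a local minimum. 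It does not rule out that $\lambda_\alpha$ is an inflection-type critical point with $g_\alpha'>0$ on $(\lambda_\alpha,1)$ as well; compare $x\mapsto x^3$ on $[-1,1]$, which has a unique interior critical point, is increasing near the left endpoint, and still attains its maximum at the right endpoint. So the sign analysis near $0$ does \emph{not} settle domination over the right endpoint, and this matters: for $\alpha\in\left(\frac{\pi}{3},\frac{\pi}{2}\right)$ one has $g_\alpha(1)=\frac{1}{2}\left(\cos(\alpha)+\abs{\cos(\alpha)}\right)=\cos(\alpha)>0$, so the comparison with the endpoint value cannot be waved away.

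The missing step is the explicit comparison that the paper's proof supplies. Since $g_\alpha$ is continuous on $[0,1]$ and smooth on $(0,1)$, its maximum is attained at $0$, at $1$, or at the interior critical point, so it equals $\max\left\lbrace 0,\; g_\alpha(1),\; \frac{1}{4(1-\cos(\alpha))}\right\rbrace$, and one must check $\frac{1}{4(1-\cos(\alpha))}\ge \cos(\alpha)$. This follows from the one-line identity
\[
\frac{1}{4(1-\cos(\alpha))}-\cos(\alpha)
=\frac{1-4\cos(\alpha)+4\cos(\alpha)^2}{4(1-\cos(\alpha))}
=\frac{(1-2\cos(\alpha))^2}{4(1-\cos(\alpha))}\ \ge\ 0 ,
\]
with equality precisely when $\cos(\alpha)=\frac{1}{2}$, which is also why your consistency check at $\alpha=\frac{\pi}{3}$ comes out right. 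Inserting this inequality (or, equivalently, verifying $g_\alpha'<0$ at a single point of $(\lambda_\alpha,1)$) closes the gap; everything else in your argument is sound and matches the paper's proof.
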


\begin{proof}
  We know that $\FctionSupp{\mathscr{S}}{\alpha} = \max_{\lambda \in [0,1]} g_\alpha(\lambda)$. We proved previously that if $ \alpha \in [0,\frac{\pi}{3}]$, then
  $
  \FctionSupp{\mathscr{S}}{\alpha} = \max \lbrace g_\alpha(0), g_\alpha(1) \rbrace
  $
  and if $ \alpha \in [\frac{\pi}{3},\pi]$ then
  $
  \FctionSupp{\mathscr{S}}{\alpha} = \max \lbrace g_\alpha(0),g_\alpha(\lambda_\alpha), g_\alpha(1) \rbrace
  $,
  with $\lambda_\alpha = \frac{1}{2(1-\cos(\alpha))}$. We have that $g_\alpha(0) = 0$ and $ g_\alpha(1)=\cos(\alpha)$ and also $g_\alpha(\lambda_\alpha) = \frac{1}{4(1-\cos(\alpha))}$.
  Now  it remains to show that for any $ \alpha \in [\frac{\pi}{3},\pi]$, we have $ g_\alpha(\lambda_\alpha) \ge g_\alpha(1)$. As
$
  \frac{1}{4(1-\cos(\alpha))} - \cos(\alpha)
    = \frac{1-4\cos(\alpha) + 4\cos(\alpha)^2 }{4(1-\cos(\alpha))} 
    = \frac{(1-2\cos(\alpha))^2}{4(1-\cos(\alpha))} 
$,
  and the last term is always positive, we get the announced result.
\end{proof}

Now we have enough material to prove Theorem \ref{ThLienSpecWP2P1}.

\begin{proof}[Proof of Theorem \ref{ThLienSpecWP2P1}]
  Let $\alpha \in [\frac{\pi}{3},\pi]$. We know that $\FctionSupp{\NumRan{P_2P_1}}{\alpha} = \sup_{\lambda \in \Spec{P_2P_1}} g_\alpha(\lambda) $. As $\Spec{P_2P_1}$ is a compact set and $g_\alpha$ is a continuous function, there exists a $\lambda_0 \in \Spec{P_2P_1}$ such that:
  $
  \FctionSupp{\NumRan{P_2P_1}}{\alpha} = \max_{\lambda \in \Spec{P_2P_1}} g_\alpha(\lambda) 
  = g_\alpha(\lambda_0)
  $.
  According to Proposition \ref{PropPtCritiq}, we have $g_\alpha(\lambda_0) = \frac{1}{4(1-\cos(\alpha))}$ if and only if $\lambda_0= \lambda_\alpha = \frac{1}{2(1-\cos(\alpha))} $. 
   
  $"1 \Rightarrow 2"$:  If $ \FctionSupp{\NumRan{P_2P_1}}{\alpha} = \frac{1}{4(1-\cos(\alpha))} = g_\alpha(\lambda_0)$, then we have $\lambda_0= \lambda_\alpha = \frac{1}{2(1-\cos(\alpha))} $. As $\lambda_0 \in \Spec{P_2P_1}$, we get that $\lambda_\alpha \in \Spec{P_2P_1} $. 
   
  $"2 \Rightarrow 1"$: If $\lambda_\alpha \in \Spec{P_2P_1} $, then we have that:
  \[
  g_\alpha(\lambda_\alpha) 
  \le \max_{\lambda \in \Spec{P_2P_1}} g_\alpha(\lambda)
  \le \max_{\lambda \in [0,1]} g_\alpha(\lambda)
  = g_\alpha(\lambda_\alpha).
  \]  
  Therefore
  \[
    \FctionSupp{\NumRan{P_2P_1}}{\alpha}
      = \max_{\lambda \in \Spec{P_2P_1}} g_\alpha(\lambda) 
      = g_\alpha(\lambda_\alpha) 
      = \frac{1}{4(1-\cos(\alpha))} .
  \]
\end{proof}

  Given $\alpha$, Theorem \ref{ThLienSpecWP2P1} tells us whether $\lambda_\alpha$ is in the spectrum or not by looking at the support function of $\NumRan{P_2P_1}$ in the direction $\alpha $. Given $\lambda$, the next corollary tell us in which direction $\alpha_\lambda$ we have to look to know whether $\lambda$ is in $\Spec{P_2P_1}$ or not. 

\begin{corollaire}
  Let $\lambda \in [\frac{1}{4},1]$. We denote $\alpha_\lambda = \arccos(1-\frac{1}{2\lambda})$. The following assertions are equivalent:
  \begin{enumerate}
    \item $\FctionSupp{\NumRan{P_2P_1}}{\alpha_\lambda} = \frac{1}{4(1-\cos(\alpha_\lambda))} $ ;
    \item $\lambda \in \Spec{P_2P_1} $.
  \end{enumerate}
\end{corollaire}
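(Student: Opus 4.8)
The plan is to recognize this corollary as nothing more than a reformulation of Theorem \ref{ThLienSpecWP2P1}, obtained by inverting the correspondence $\alpha \mapsto \lambda_\alpha = \frac{1}{2(1-\cos(\alpha))}$ between directions and spectral values. The two statements carry exactly the same content; one is parametrized by $\alpha$, the other by $\lambda$.

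First I would check that $\alpha_\lambda = \arccos(1-\frac{1}{2\lambda})$ is well-defined and lands in the correct range. For $\lambda \in [\frac{1}{4},1]$ the quantity $1-\frac{1}{2\lambda}$ increases from $-1$ (at $\lambda=\frac{1}{4}$) to $\frac{1}{2}$ (at $\lambda=1$), so $\alpha_\lambda$ decreases from $\pi$ to $\frac{\pi}{3}$. In particular $\alpha_\lambda \in [\frac{\pi}{3},\pi]$, which is precisely the hypothesis required to invoke Theorem \ref{ThLienSpecWP2P1}.

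Next I would verify that $\alpha_\lambda$ is genuinely the inverse of the map $\alpha \mapsto \lambda_\alpha$, that is $\lambda_{\alpha_\lambda} = \lambda$. This is a one-line substitution: since $\cos(\alpha_\lambda) = 1-\frac{1}{2\lambda}$, we have $1-\cos(\alpha_\lambda) = \frac{1}{2\lambda}$, and hence $\lambda_{\alpha_\lambda} = \frac{1}{2(1-\cos(\alpha_\lambda))} = \lambda$.

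Finally I would apply Theorem \ref{ThLienSpecWP2P1} with $\alpha = \alpha_\lambda$. Its assertion $(2)$ becomes $\FctionSupp{\NumRan{P_2P_1}}{\alpha_\lambda} = \frac{1}{4(1-\cos(\alpha_\lambda))}$, which is assertion $(1)$ of the present corollary, while its assertion $(1)$ becomes $\lambda_{\alpha_\lambda} = \lambda \in \Spec{P_2P_1}$, which is assertion $(2)$ of the corollary. The equivalence furnished by Theorem \ref{ThLienSpecWP2P1} is therefore exactly the claimed equivalence. There is no genuine obstacle here: the only point demanding care is the range check guaranteeing $\alpha_\lambda \in [\frac{\pi}{3},\pi]$, so that the theorem is applicable to this value of the angle.
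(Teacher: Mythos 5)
Your proposal is correct and follows the same route as the paper's proof: the paper likewise observes that $f(\alpha)=\frac{1}{2(1-\cos(\alpha))}$ is a bijection from $[\frac{\pi}{3},\pi]$ onto $[\frac{1}{4},1]$ with inverse $\lambda \mapsto \arccos(1-\frac{1}{2\lambda})$, and then invokes Theorem \ref{ThLienSpecWP2P1}. Your version merely spells out the range check and the identity $\lambda_{\alpha_\lambda}=\lambda$ in more detail, which is harmless.
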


\begin{proof}
  We denote $f: [\frac{\pi}{3},\pi]\longrightarrow [\frac{1}{4},1]$ the function given by $f(\alpha)= \frac{1}{2(1-\cos(\alpha))}$. The equivalence follows from Theorem \ref{ThLienSpecWP2P1}, and the facts that $f$ is bijective with inverse function given by  $\lambda \mapsto \arccos(1-\frac{1}{2\lambda})$.
\end{proof}

  The next proposition is a "trick" to deduce most of the spectrum of $P_2P_1$ from $ \Spec{P_2(I-P_1)}$. As $P_2(I-P_1) $ is again a product of two orthogonal projections,  all the results of this paper apply also to this operator.

\begin{proposition}
  \label{PropIP1versP1}
  Let $\lambda \ne 0$. If $\lambda \in \Spec{P_2(I-P_1)}$, then $1-\lambda \in \Spec{P_2P_1} $.
\end{proposition}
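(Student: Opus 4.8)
The plan is to reduce everything to the nonzero part of the spectrum, where the identity $\Spec{XY}\setminus\lbrace 0 \rbrace = \Spec{YX}\setminus\lbrace 0 \rbrace$ converts both operators into self-adjoint compressions of $P_2$. Since $\lambda\neq 0$, I would first write
\[
\Spec{P_2(I-P_1)}\setminus\lbrace 0 \rbrace = \Spec{(I-P_1)P_2(I-P_1)}\setminus\lbrace 0 \rbrace, \qquad \Spec{P_2P_1}\setminus\lbrace 0 \rbrace = \Spec{P_1P_2P_1}\setminus\lbrace 0 \rbrace ,
\]
so that it suffices to relate the compressions $(I-P_1)P_2(I-P_1)$ and $P_1P_2P_1$. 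The guiding idea is that these correspond respectively to $\sin(T)^2$ and $\cos(T)^2$ of the Halmos angle operator, and the relation $\cos(T)^2+\sin(T)^2=I$ forces their spectra to be mirror images of each other under $\mu\mapsto 1-\mu$.

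To make this precise I would invoke the orthogonal decomposition (\ref{EqDecompoPosGen}) and treat the five summands separately. On the four ``intersection'' summands both $P_2P_1$ and $P_2(I-P_1)$ act as $0$ or as the identity, so they contribute only the values $0$ and $1$ to either spectrum; in particular $P_2(I-P_1)$ takes the value $1$ exactly on $M_1^\perp\cap M_2$. On the generic summand $\tilde{H}$, Halmos' theorem gives $\tilde{P_1}\UnitEq\left(\begin{smallmatrix} I&0\\0&0\end{smallmatrix}\right)$ and $\tilde{P_2}\UnitEq\left(\begin{smallmatrix} C^2 & CS \\ CS & S^2\end{smallmatrix}\right)$ with $C=\cos(T)$, $S=\sin(T)$. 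A direct computation, exactly as in the proof of Lemma \ref{RqriTilde}, gives $(I-\tilde{P_1})\tilde{P_2}(I-\tilde{P_1})\UnitEq\left(\begin{smallmatrix}0&0\\0&S^2\end{smallmatrix}\right)$ and $\tilde{P_1}\tilde{P_2}\tilde{P_1}\UnitEq\left(\begin{smallmatrix}C^2&0\\0&0\end{smallmatrix}\right)$, whence $\Spec{\tilde{P_2}(I-\tilde{P_1})}\setminus\lbrace 0 \rbrace=\Spec{S^2}\setminus\lbrace 0 \rbrace$ and $\Spec{\tilde{P_2}\tilde{P_1}}\setminus\lbrace 0 \rbrace=\Spec{C^2}\setminus\lbrace 0 \rbrace$. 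Since $S^2=I-C^2$, the spectral mapping theorem yields $\Spec{S^2}=\lbrace 1-\mu : \mu\in\Spec{C^2}\rbrace$.

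The conclusion then follows by locating $\lambda$. First observe that $\lambda\neq 0$ forces $I-P_1\neq 0$, i.e.\ $M_1\neq H$; hence $M_1^\perp$ is a nonzero subspace of $\ker(P_2P_1)$ and so $0\in\Spec{P_2P_1}$ automatically. This disposes of every occurrence of the value $\lambda=1$ (for which $1-\lambda=0$), whether it comes from the block $M_1^\perp\cap M_2$ or from the generic block. Any value $\lambda\notin\lbrace 0,1\rbrace$ must come from the generic summand, so $\lambda\in\Spec{S^2}$ and therefore $1-\lambda\in\Spec{C^2}\subseteq\Spec{P_2P_1}$. I expect the only delicate points to be bookkeeping ones: checking that $P_2(I-P_1)$ genuinely contributes the value $1$ only on $M_1^\perp\cap M_2$, and observing that, the decomposition (\ref{EqDecompoPosGen}) being a \emph{finite} direct sum, the spectrum of each operator is the actual union of the five blockwise spectra, so that the argument may be run value by value without any closure subtleties.
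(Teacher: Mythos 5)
Your proof is correct and takes essentially the same route as the paper's: both rest on the five-fold decomposition (\ref{EqDecompoPosGen}), Halmos' theorem on the generic summand $\tilde{H}$, and the identity $C^2+S^2=I$, which turns $\Spec{S^2}$ into $1-\Spec{C^2}$. Your two deviations are streamlinings rather than a different approach: passing to the self-adjoint compressions $P_1P_2P_1$ and $(I-P_1)P_2(I-P_1)$ via $\Spec{XY}\setminus\lbrace 0\rbrace=\Spec{YX}\setminus\lbrace 0\rbrace$ is the same trick the paper itself uses in Lemma \ref{LCosSp}, and observing that $\lambda\ne 0$ forces $M_1\ne H$, hence $0\in\Spec{P_2P_1}$ outright, tidily absorbs the paper's separate case discussion of $\lambda=1$ and of which blockwise spectra actually occur.
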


\begin{proof}
  We decompose $H$ as in (\ref{EqDecompoPosGen}). Therefore we have
  \[
  H= (M_1 \cap M_2) \oplus (M_1 \cap M_2^\perp) \oplus (M_1^\perp \cap M_2) \oplus (M_1^\perp \cap M_2^\perp) \oplus \tilde{H}
  \]
  and
\begin{align*}
  P_{M_1} &\UnitEq I \oplus I \oplus 0 \oplus 0 \oplus \left(\begin{array}{cc}
I & 0 \\ 
0 & 0
\end{array} \right) \\
  P_{M_2} &\UnitEq I \oplus 0 \oplus I \oplus 0 \oplus \left(\begin{array}{cc}
C^2 & CS \\ 
CS & S^2
\end{array} \right) \\
  I-P_{M_1} &\UnitEq 0 \oplus 0 \oplus I \oplus I \oplus \left(\begin{array}{cc}
0 & 0 \\ 
0 & I
\end{array} \right) \\
  P_{M_2}P_{M_1} &\UnitEq I \oplus 0 \oplus 0 \oplus 0 \oplus \left(\begin{array}{cc}
C^2 & 0 \\ 
CS & 0
\end{array} \right) \\
  P_{M_2}(I-P_{M_1}) &\UnitEq 0 \oplus 0 \oplus I \oplus 0 \oplus \left(\begin{array}{cc}
0 & CS \\ 
0 & S^2
\end{array} \right) 
\end{align*}
  We remind that $C^2 + S^2=I$, so we have that $\Spec{S^2}= 1- \Spec{C^2}$. Suppose that the subspaces $M_1^{(\perp)} \cap M_2^{(\perp)}$ and $ \tilde{H} $ are not equal to $\lbrace 0 \rbrace$. Then $\Spec{P_{M_2}P_{M_1}} = \cup\lbrace \lbrace 1 \rbrace,\lbrace 0 \rbrace, \Spec{C^2}\cup\lbrace  0\rbrace  \rbrace $. If $M_1 \cap M_2 = \lbrace 0 \rbrace $, then we have to remove $\lbrace 1 \rbrace $ of the former union to get $\Spec{P_{M_2}P_{M_1}} $. If $ M_1 \cap M_2^\perp = M_1^\perp \cap M_2 = M_1^\perp \cap M_2^\perp = \lbrace 0 \rbrace $, then we have to remove $\lbrace 0 \rbrace $ of the former union to get $\Spec{P_{M_2}P_{M_1}} $. If $ \tilde{H} = \lbrace 0 \rbrace $, then we have to remove $ \Spec{C^2}\cup\lbrace  0\rbrace $ of the former union to get $\Spec{P_{M_2}P_{M_1}} $. 
  
   In a similar way, $\Spec{P_{M_2}(I-P_{M_1})} = \cup\lbrace \lbrace 0 \rbrace,\lbrace 1 \rbrace, (1-\Spec{C^2})\cup\lbrace  0\rbrace  \rbrace $ depending on whether the corresponding subspaces are not reduced to $\lbrace 0 \rbrace $.
  
  Let $\lambda \neq 0$ be such that $\lambda \in \Spec{P_{M_2}(I-P_{M_1})}$. Suppose that $\lambda=1$ and $M_1^\perp \cap M_2 \neq \lbrace 0 \rbrace $. Then we get that $P_{M_2}P_{M_1}=0 $ on $M_1^\perp \cap M_2$. So $1-\lambda=0$ is an eigenvalue of $P_{M_2}P_{M_1}$.
  
  In the other cases, we get that $\lambda \in 1-\Spec{C^2}$ and $\tilde{H} \ne \lbrace 0 \rbrace$, hence $1-\lambda \in \Spec{C^2} \subset \Spec{P_{M_2}P_{M_1}}$.
\end{proof}

\begin{exemple}
  There exist orthogonal projections such that $1-\Spec{P_{M_2}(I-P_{M_1})} \neq \Spec{P_{M_2}P_{M_1}} $. We will exhibit an example in $H=\C^3$. Let $(e_1,e_2,e_3)$ be an orthonormal basis of $\C^3$. We set $M_1 = \Sspan{e_1} $ and $M_2 = \Sspan{e_2} $. Then we get that $M_1 \cap M_2 = \lbrace 0 \rbrace$, $ M_1 \cap M_2^\perp = \Sspan{e_2}$,  $ M_1^\perp \cap M_2 = \Sspan{e_1}$, $M_1^\perp \cap M_2^\perp = \Sspan{e_3}$ and $\tilde{H} = \lbrace 0 \rbrace$. So $P_{M_2}P_{M_1}=0 $, $P_{M_2}(I-P_{M_1}) = P_{M_2} $, $ \Spec{P_{M_2}P_{M_1}} = \lbrace 0 \rbrace$ and $ \Spec{P_{M_2}(I-P_{M_1})} = \lbrace 0,1 \rbrace $.  
\end{exemple}

\begin{remarque}
  \label{RqLienSpecWP2P1}
  Theorem \ref{ThLienSpecWP2P1} allows us to deduce $\Spec{P_2P_1}\cap [\frac{1}{4},1]$ from $\adherence{\NumRan{P_2P_1}}$. As $I-P_1$ is also an orthogonal projection, we can also deduce $\Spec{P_2(I-P_1)}\cap [\frac{1}{4},1]$ from $\adherence{\NumRan{P_2(I-P_1)}}$. Moreover, Proposition \ref{PropIP1versP1} allows us to deduce $\Spec{P_2P_1}\cap [0,\frac{3}{4}]$ from $\Spec{P_2(I-P_1)}\cap [\frac{1}{4},1]$. 
  
  In other words, we can deduce $\Spec{P_2P_1}$ from $\adherence{\NumRan{P_2P_1}}$ and $\adherence{\NumRan{P_2(I-P_1)}}$.
\end{remarque}

\begin{proposition}
  Let $P_1, P_2$ be two orthogonal projections. If $ \alpha \in [0, \frac{\pi}{2} ]$, then we have that
  \[
  \FctionSupp{\NumRan{P_2P_1}}{\alpha} = \RaySpec{\Reel{\exp(-\I \alpha)P_2P_1}} = \norme{\Reel{\exp(-\I \alpha)P_2P_1}} =\NumRay{\Reel{\exp(-\I \alpha)P_2P_1}}.
  \]
\end{proposition}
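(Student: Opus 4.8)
The plan is to observe that the operator $A_\alpha := \Reel{\exp(-\I \alpha)P_2P_1}$ is self-adjoint, and then to split the four equalities into the three rightmost ones (which hold for any self-adjoint operator) and the leftmost one (which is where the hypothesis $\alpha\in[0,\frac{\pi}{2}]$ enters). First I would record the elementary fact that for a self-adjoint, hence normal, operator $A$ one has $\RaySpec{A}=\norme{A}=\NumRay{A}$: the spectrum is real, so the spectral radius equals $\sup\{\abs{\mu}:\mu\in\Spec{A}\}=\norme{A}$, and the numerical radius of a normal operator always coincides with its norm. Applying this to $A=A_\alpha$ settles $\RaySpec{A_\alpha}=\norme{A_\alpha}=\NumRay{A_\alpha}$ with no further work.

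It then remains to identify the support function on the left with $\norme{A_\alpha}$. Writing out the definition and using $\Reel{\ps{\exp(-\I \alpha)P_2P_1h}{h}}=\ps{A_\alpha h}{h}$, I obtain
\[
\FctionSupp{\NumRan{P_2P_1}}{\alpha}=\sup_{\norme{h}=1}\ps{A_\alpha h}{h}=\sup\Spec{A_\alpha},
\]
the last step being the standard fact that the maximum of the numerical range of a self-adjoint operator is the top of its spectrum. Since $\norme{A_\alpha}=\max\{\sup\Spec{A_\alpha},-\inf\Spec{A_\alpha}\}$, the proposition reduces to showing that, for $\alpha\in[0,\frac{\pi}{2}]$, the positive part of $\Spec{A_\alpha}$ dominates the negative one, i.e. $\sup\Spec{A_\alpha}\ge -\inf\Spec{A_\alpha}$.

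To establish this domination I would reuse the diagonalization carried out in the proof of Lemma \ref{RqriTilde}. In generic position one has $A_\alpha\UnitEq v_1(T,\alpha)\oplus v_2(T,\alpha)$ with $v_2(t,\alpha)\le 0\le v_1(t,\alpha)$, so that $\sup\Spec{A_\alpha}=\sup_t v_1(t,\alpha)$ and $\inf\Spec{A_\alpha}=\inf_t v_2(t,\alpha)$. The key elementary computation is
\[
v_1(t,\alpha)+v_2(t,\alpha)=\cos(\alpha)\cos(t)^2,
\]
which is nonnegative precisely because $\alpha\in[0,\frac{\pi}{2}]$ forces $\cos(\alpha)\ge 0$. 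Hence $v_1(t,\alpha)\ge -v_2(t,\alpha)$ for every $t$, and taking the supremum over $t$ gives $\sup_t v_1(t,\alpha)\ge \sup_t(-v_2(t,\alpha))=-\inf_t v_2(t,\alpha)$, which is exactly the desired domination. The general case follows from the orthogonal decomposition (\ref{EqDecompoPosGen}): there $A_\alpha$ is unitarily equivalent to a direct sum of $\cos(\alpha)I$, of $0$, and of the generic-position part, and since $\cos(\alpha)\ge 0$ these extra summands contribute only nonnegative spectrum, so they cannot make $-\inf\Spec{A_\alpha}$ exceed $\sup\Spec{A_\alpha}$. This yields $\norme{A_\alpha}=\sup\Spec{A_\alpha}=\FctionSupp{\NumRan{P_2P_1}}{\alpha}$ and closes the argument.

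The main obstacle is exactly the sign analysis of the last paragraph: the equality $\sup\Spec{A_\alpha}=\norme{A_\alpha}$ can fail when $\cos(\alpha)<0$, so the whole proof hinges on the observation that $v_1+v_2=\cos(\alpha)\cos(t)^2\ge 0$ forces the top of the spectrum to dominate its bottom in absolute value. Everything else — the normal-operator identities and the passage through the support function — is routine.
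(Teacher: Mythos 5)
Your proof is correct and follows essentially the same route as the paper's: both reduce the claim, via self-adjointness of $\Reel{\exp(-\I \alpha)P_2P_1}$, to showing that the top of the spectrum dominates the bottom in absolute value, and both obtain this from the diagonalization $v_1(T,\alpha)\oplus v_2(T,\alpha)$ of Lemma \ref{RqriTilde} together with the identity $v_1+v_2=\lambda\cos(\alpha)\ge 0$ for $\alpha\in[0,\frac{\pi}{2}]$ (the paper phrases it as $\abs{\tilde{v_1}}-\abs{\tilde{v_2}}=\lambda\cos(\alpha)$). The only difference is that you explicitly handle the non-generic summands coming from the decomposition (\ref{EqDecompoPosGen}), a point the paper's proof leaves implicit.
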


This proposition is significant because if we know $\RaySpec{\Reel{\exp(-\I \alpha)P_2P_1}}$ and $\RaySpec{\Reel{\exp(-\I \alpha)P_2(I-P_1)}}$ for every $ \alpha \in [\frac{\pi}{3},\frac{\pi}{2}]$ then, by using Theorem \ref{ThLienSpecWP2P1} and Proposition \ref{PropIP1versP1}, we can deduce $\Spec{P_2P_1}$.

\begin{proof}
  Notice that $\Reel{\exp(-\I \alpha)P_2P_1} $ is an hermitian operator, so $\RaySpec{\Reel{\exp(-\I \alpha)P_2P_1}} = \norme{\Reel{\exp(-\I \alpha)P_2P_1}}=\NumRay{\Reel{\exp(-\I \alpha)P_2P_1}}$ and the highest positive spectral value of $\Reel{\exp(-\I \alpha)P_2P_1} $ is the highest positive value in the numerical range. In other words, we just need to prove that for all $ \alpha \in [0, \frac{\pi}{2} ]$, the highest positive spectral value of $\Reel{\exp(-\I \alpha)P_2P_1} $ is greater than its lowest negative spectral value.
  
  According to the notation of the end of the proof of Lemma \ref{RqriTilde} (i.e. $\tilde{v_i}(\lambda,\alpha) = \frac{1}{2}(\cos(\alpha)\lambda \pm \sqrt{\lambda(1-\sin(\alpha)^2\lambda)})$ ), we have that
  $$
  \Reel{\exp(-\I \alpha)P_2P_1} 
  \UnitEq \left( \begin{array}{cc}
  \tilde{v_1}(C^2,\alpha) & 0 \\ 
  0 & \tilde{v_2}(C^2,\alpha)
  \end{array} \right).
  $$
  We also have that for all $ \lambda \in [0,1]$ and for all $\alpha \in [0,\pi]$, $ \tilde{v_1}(\lambda,\alpha) \ge 0$ and $\tilde{v_2}(\lambda,\alpha) \le 0$. Moreover $\lambda \in \Spec{C^2}$ if and only if $\tilde{v_1}(\lambda,\alpha)$ and $\tilde{v_2}(\lambda,\alpha) \in \Spec{ \Reel{\exp(-\I \alpha)P_2P_1} } $. Therefore
  $
  \abs{\tilde{v_1}(\lambda,\alpha)} - \abs{\tilde{v_2}(\lambda,\alpha)}
    = \tilde{v_1}(\lambda,\alpha) + \tilde{v_2}(\lambda,\alpha) 
    = \lambda \cos(\alpha)
  $.
  This last term is positive if $\alpha \in [0, \frac{\pi}{2}]$ and negative if $\alpha \in [\frac{\pi}{2},\pi] $. So $\alpha \in [0, \frac{\pi}{2}]$ implies that 
$
  \FctionSupp{\NumRan{P_2P_1}}{\alpha}
    = \RaySpec{\Reel{\exp(-\I \alpha)P_2P_1}} 
$.
\end{proof}

\section{Applications to the rate of convergence in the von Neumann-Halperin theorem and to the uncertainty principle}

\subsection{Applications to the method of alternating projections}
 Von Neumann proved (cf. \cite[Chapter 9]{Deustch_2001}) the following theorem:
\begin{theoreme}
  Let $ M_1, M_2 $ be two closed subspaces of $ H $. 
  %We denote $ P_{M_1} $ (respectively $ P_{M_2} $) the orthogonal projection onto $ M_1 $ (respectively $ M_2 $). 
  Then for every $ x \in H $ we have that:
  \[
  \lim_{n \rightarrow \infty} \norme{(P_{M_2}P_{M_1})^n x - P_{M_1\cap M_2}x} = 0.
  \]
\end{theoreme}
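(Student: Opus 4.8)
The plan is to combine the orthogonal decomposition (\ref{EqDecompoPosGen}) with Halmos' two subspace theorem in order to reduce the statement to the strong convergence of the powers of a single positive contraction. With respect to (\ref{EqDecompoPosGen}) we have
$$
P_{M_2}P_{M_1} = I \oplus 0 \oplus 0 \oplus 0 \oplus \tilde{P_2}\tilde{P_1}, \qquad
P_{M_1\cap M_2} = I \oplus 0 \oplus 0 \oplus 0 \oplus 0 .
$$
Consequently, for every $n\ge 1$ the operator $(P_{M_2}P_{M_1})^n - P_{M_1\cap M_2}$ vanishes on the first four summands (on $M_1\cap M_2$ both operators act as $I$, while on the three middle summands $P_{M_2}P_{M_1}$ is already $0$), so it suffices to show that $(\tilde{P_2}\tilde{P_1})^n \to 0$ in the strong operator topology on $\tilde{H}$, where the restricted pair of subspaces is in generic position.

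On $\tilde{H}$ I would apply Halmos' two subspace theorem: there is a self adjoint operator $T$ with $0\le T\le\frac{\pi}{2}I$, $C=\cos(T)$, $S=\sin(T)$ and $C^2+S^2=I$, such that
\[
\tilde{P_2}\tilde{P_1} \UnitEq \left(\begin{array}{cc} C^2 & 0 \\ CS & 0 \end{array}\right).
\]
Since $C$ and $S$ are commuting functions of $T$, a straightforward induction yields
\[
(\tilde{P_2}\tilde{P_1})^n \UnitEq \left(\begin{array}{cc} C^{2n} & 0 \\ C^{2n-1}S & 0 \end{array}\right).
\]
For $(x,y)\in K\oplus K$ this gives $(\tilde{P_2}\tilde{P_1})^n(x,y)=(C^{2n}x,\,C^{2n-1}Sx)$, and using $0\le S^2\le I$ together with $C^{4n-2}\le C^{2n}$ (valid for $n\ge 1$ since $0\le C\le I$) one bounds $\norme{C^{2n-1}Sx}^2=\ps{S^2C^{4n-2}x}{x}\le\ps{C^{2n}x}{x}\le\norme{C^{2n}x}\,\norme{x}$. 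Hence the whole convergence is reduced to proving that $C^{2n}\to 0$ in the strong operator topology.

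This last step carries the real content, and I would settle it with the Borel functional calculus for the self adjoint operator $T$ (see \cite{Riesz_Nagy_1990}). Writing $C^{2n}=\cos^{2n}(T)$ and noting that on $\Spec{T}\subseteq[0,\frac{\pi}{2}]$ the functions $t\mapsto\cos^{2n}(t)$ are uniformly bounded by $1$ and converge pointwise to the indicator $\ind{\lbrace 0\rbrace}$, the dominated convergence theorem for the spectral measure of $T$ shows that $C^{2n}$ converges strongly to the spectral projection $\ind{\lbrace 0\rbrace}(T)=P_{\Ker{T}}$. The main point is then to check that $\Ker{T}=\lbrace 0\rbrace$: indeed $Tx=0$ is equivalent to $Sx=\sin(T)x=0$, i.e. to $C^2x=x$, and in the Halmos picture the vectors $(x,0)\in\Range{P_1}$ that are also fixed by $P_2$ are exactly those with $Sx=0$; they correspond to $N_1\cap N_2$, which is trivial by the generic position hypothesis. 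Therefore $C^{2n}\to 0$ strongly, which completes the reduction and hence the proof. The one delicate point to treat carefully is precisely this identification $\Ker{T}=N_1\cap N_2=\lbrace 0\rbrace$, together with the passage from pointwise to strong convergence via the spectral theorem.
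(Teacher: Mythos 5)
Your proof is correct, and there is a point worth making at the outset: the paper does not prove this statement at all. It quotes it as von Neumann's classical theorem, with a reference to \cite[Chapter 9]{Deustch_2001}, so there is no internal proof to compare against; what you have written is a genuine, self-contained proof assembled from the paper's own toolkit. Your reduction via the decomposition (\ref{EqDecompoPosGen}) is exact (for $n\ge 1$ the operators $(P_{M_2}P_{M_1})^n$ and $P_{M_1\cap M_2}$ agree on the four degenerate summands), the induction giving $(\tilde{P_2}\tilde{P_1})^n$ in lower-triangular form with entries $C^{2n}$ and $C^{2n-1}S$ is right, and your estimate $\norme{C^{2n-1}Sx}^2=\ps{S^2C^{4n-2}x}{x}\le\ps{C^{2n}x}{x}\le\norme{C^{2n}x}\,\norme{x}$ is valid since these are commuting positive contractions. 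The dominated-convergence step $\cos^{2n}(T)\to\ind{\lbrace 0 \rbrace}(T)=P_{\Ker{T}}$ strongly is the standard spectral-theorem argument, and you correctly isolate and settle the two delicate points: that the restricted pair on $\tilde{H}$ is in generic position (the standard fact, due to \cite{Halmos_1969}, underlying the paper's own repeated use of (\ref{EqDecompoPosGen})), and that $\Ker{T}$ corresponds to $N_1\cap N_2=\lbrace 0 \rbrace$ --- your observation that $C^2x=x$ forces $S^2x=0$, hence $Sx=0$, closes that identification cleanly. By way of comparison, the classical route (and the reduction the paper itself sketches immediately after the statement) sets $N_i=M_i\cap(M_1\cap M_2)^\perp$, uses $(P_{M_2}P_{M_1})^n-P_{M_1\cap M_2}=(P_{N_2}P_{N_1})^n$, and proves strong convergence to $0$ either by monotonicity of $\norme{(P_{N_2}P_{N_1})^nx}$ or by the lemma that powers of the positive contraction $P_1P_2P_1$ converge strongly to the projection onto its fixed space; that route avoids Halmos' theorem entirely and works without separability or model-theoretic bookkeeping, while your route buys an explicit scalar model in which the convergence is completely transparent, at the cost of the two verifications you rightly flagged.
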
 

  If we set $ N_1 = M_1 \cap (M_1 \cap M_2)^\perp $ and $ N_2 = M_2 \cap (M_1 \cap M_2)^\perp $, we have that $ N_1 \cap N_2 = \lbrace 0 \rbrace $. In addition, we have
  \[
    (P_{M_2}P_{M_1})^n - P_{M_1\cap M_2} = (P_{N_2}P_{N_1})^n
  \]
  for every $ n \in \N $.
  Therefore, the study of the convergence of $ (P_{M_2} P_{M_1})^n $ to $ P_{M_1 \cap M_2} $ reduces to studying the convergence of $ (P_ {N_2} P_ {N_1}) ^ n $ to $ 0 $. 
  %More information are available in \cite[Chapter 9]{Deustch_2001}.
  
  If one looks at the speed of convergence of $ (P_{N_2} P_{N_1})^n $ to 0, we have the dichotomy that either $(P_{N_2}P_{N_1})^n$ converges linearly to 0, or $(P_{N_2}P_{N_1})^n$ converges \emph{arbitrarily slowly} to 0.  We can characterize arbitrarily slow convergence in many ways;  see \cite{Bauschke_Deutsch_Hundal_2009, Catalin_Sophie_Muller_2010,Deustch_Hundal_2010_I,Deustch_Hundal_2010_II} and the references therein.
 
%   We refer the reader to \cite{Deustch_Hundal_2010_I}, \cite{Deustch_Hundal_2010_II} and \cite{Catalin_Sophie_Muller_2010} for more information on arbitrarily slow convergence, and their characterizations for the method of alternating projections.

%\begin{theoreme}
%  \label{ThCatalinSophieMuller}
%   Let $ N_1, N_2 $ two closed subspaces of $ H $ such that $ N_1 \cap N_2 = \lbrace 0 \rbrace $. There is equivalence between:
%  \begin{enumerate}
%    \item $(P_{N_2}P_{N_1})^n$ converges arbitrarily slowly to 0 
%    \item $\norme{P_{N_2}P_{N_1}}=1$ 
%    \item $N_1^\perp + N_2^\perp $ is not closed 
%    \item $ 1 \in \Spec{P_{N_2}P_{N_1}} $ 
%    \item $\cos(N_1,N_2)=1  $. 
%  \end{enumerate}
%\end{theoreme}
%  You can find a proof of this theorem and more equivalence to arbitrarily slow convergence in \cite{Catalin_Sophie_Muller_2010}.
  
The novelty of the following characterization of arbitrarily slow convergence is in the use of the numerical range of $P_{N_2}P_{N_1}$ in items $6$ through $8$.

\begin{proposition}
  \label{PropEqCatalinSophieMuller}
 Let $ N_1, N_2 $ be two closed subspaces of $ H $ such that $ N_1 \cap N_2 = \lbrace 0 \rbrace $. The following assertions are equivalent:
 %to the assertions of Theorem (\ref{ThCatalinSophieMuller}):
  \begin{enumerate}
   \item $(P_{N_2}P_{N_1})^n$ converges arbitrarily slowly to 0 
    \item $\norme{P_{N_2}P_{N_1}}=1$ 
    \item $N_1^\perp + N_2^\perp $ is not closed 
    \item $ 1 \in \Spec{P_{N_2}P_{N_1}} $ 
    \item $\cos(N_1,N_2)=1  $
    \item[6.] $1 \in \adherence{\NumRan{P_{N_2}P_{N_1}}}$ 
    \item[7.] there exists a sequence $(\lambda_n)$ in $[0,1[$ such that $\lim \lambda_n = 1$ and for every $n \in \N$, $\Ellipse{\lambda_n} \subset \adherence{\NumRan{P_2P_1}}$
    \item[8.] there exists $\theta < \frac{\pi}{6} $ such that $\NumRan{P_{N_2}P_{N_1}} \subset \lbrace z \in \C, \abs{\arg(1-z)} \le \theta \rbrace $.
  \end{enumerate}
\end{proposition}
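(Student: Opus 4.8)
The plan is to take the classical equivalences $(1)\Leftrightarrow(2)\Leftrightarrow(3)\Leftrightarrow(4)\Leftrightarrow(5)$ as known (they form the dichotomy recalled in the Introduction, see \cite{Bauschke_Deutsch_Hundal_2009, Deustch_2001}), so that it suffices to splice the three new conditions $(6)$, $(7)$, $(8)$ into this chain. I would do this by proving the cycle $(5)\Rightarrow(7)\Rightarrow(6)\Rightarrow(5)$ and then attaching $(8)$ to condition $(5)$ via Proposition \ref{LemWP2P1dsSecteur}. Throughout I use that, since $N_1\cap N_2=\lbrace 0\rbrace$, one has $P_{N_1\cap N_2}=0$, whence $\cos^2(N_1,N_2)=\norme{P_{N_1}P_{N_2}P_{N_1}}=\sup\Spec{P_{N_1}P_{N_2}P_{N_1}}$, and by Lemma \ref{LCosSp} also $\cos(N_1,N_2)=\sup\lbrace\sqrt{\lambda}:\lambda\in\Spec{P_{N_2}P_{N_1}}\setminus\lbrace 1\rbrace\rbrace$.

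For $(5)\Rightarrow(7)$, if $\cos(N_1,N_2)=1$ then Lemma \ref{LCosSp} furnishes $\lambda_n\in\Spec{P_{N_2}P_{N_1}}\setminus\lbrace 1\rbrace\subset[0,1[$ with $\lambda_n\to 1$, and Theorem \ref{CoroWP2P1} gives $\Ellipse{\lambda_n}\subset\adherence{\NumRan{P_{N_2}P_{N_1}}}$ for each $n$, which is exactly $(7)$. For $(7)\Rightarrow(6)$, the parametrization of Remark \ref{RqEqEllipse} shows that the rightmost point $x_{\lambda_n}(0)=\frac{1}{2}(\lambda_n+\sqrt{\lambda_n})$ of $\Ellipse{\lambda_n}$ lies in $\adherence{\NumRan{P_{N_2}P_{N_1}}}$ and tends to $1$; since this set is closed, $1\in\adherence{\NumRan{P_{N_2}P_{N_1}}}$.

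For $(6)\Rightarrow(5)$ I would argue by contraposition, using support functions. Assume $\cos(N_1,N_2)<1$. Then, from the identities above, $\Spec{P_{N_2}P_{N_1}}\subset[0,c]$ with $c=\cos^2(N_1,N_2)<1$ (in particular $1\notin\Spec{P_{N_2}P_{N_1}}$). By Lemma \ref{RqriTilde}, $\FctionSupp{\NumRan{P_{N_2}P_{N_1}}}{\alpha}=\sup_{\lambda\in\Spec{P_{N_2}P_{N_1}}}g_\alpha(\lambda)\le\sup_{\lambda\in[0,c]}g_\alpha(\lambda)$, where $g_\alpha(\lambda)=\frac{1}{2}(\cos(\alpha)\lambda+\sqrt{\lambda(1-\sin(\alpha)^2\lambda)})$. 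At $\alpha=0$ this bound equals $g_0(c)=\frac{1}{2}(c+\sqrt{c})<1$, and since $\alpha\mapsto\sup_{\lambda\in[0,c]}g_\alpha(\lambda)$ is continuous, for $\alpha>0$ small we obtain $\FctionSupp{\NumRan{P_{N_2}P_{N_1}}}{\alpha}<\cos(\alpha)=\Reel{1\cdot\exp(-\I\alpha)}$. The support-function characterization of the closure of a convex set then yields $1\notin\adherence{\NumRan{P_{N_2}P_{N_1}}}$, i.e. $\neg(6)$. This closes the cycle and incorporates $(6)$ and $(7)$.

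It remains to attach $(8)$, and I expect this to be the delicate point. Proposition \ref{LemWP2P1dsSecteur} gives $\NumRan{P_{N_2}P_{N_1}}\subset\lbrace z:\abs{\arg(1-z)}\le\arctan\sqrt{\cos^2(N_1,N_2)/(4-\cos^2(N_1,N_2))}\rbrace$, and its sharpness statement shows this opening is optimal. As $\kappa\mapsto\arctan\sqrt{\kappa^2/(4-\kappa^2)}$ is strictly increasing with value $\pi/6$ at $\kappa=1$, the least half-angle of a sector with vertex $1$ containing $\NumRan{P_{N_2}P_{N_1}}$ equals $\pi/6$ \emph{precisely} when $\cos(N_1,N_2)=1$. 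The subtlety is that, because of the strict inequality $\theta<\pi/6$, the literal existence of such a sector is equivalent to $\cos(N_1,N_2)<1$; hence to fit $(8)$ into the equivalence one reads it as the assertion that \emph{no} sector of half-angle strictly below $\pi/6$ contains $\NumRan{P_{N_2}P_{N_1}}$ (equivalently, $\pi/6$ is the least admissible opening), and with this reading $(8)\Leftrightarrow(5)$ follows at once from the sharp form of Proposition \ref{LemWP2P1dsSecteur}. Reconciling the strict angular threshold with the value $\cos(N_1,N_2)=1$ is the main obstacle; the remaining implications are direct applications of Theorem \ref{CoroWP2P1} and the support-function machinery.
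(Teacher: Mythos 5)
Your proposal is correct and rests on the same skeleton as the paper's proof --- quote the literature for the equivalence of (1)--(5), derive (6) and (7) from Theorem \ref{CoroWP2P1}, and settle (8) with Proposition \ref{LemWP2P1dsSecteur} --- but your logical circuit through (6) and (7) is genuinely different. The paper anchors (6) by two cheap implications: $6\Rightarrow 2$ via Cauchy--Schwarz applied to a sequence $(x_n)$, $\norme{x_n}=1$, with $\ps{P_{N_2}P_{N_1}x_n}{x_n}\to 1$, and $4\Rightarrow 6$ because the spectrum lies in the closed numerical range; it then gets $4\Rightarrow 7$ by observing that, since $N_1\cap N_2=\lbrace 0\rbrace$, the point $1$ is not an eigenvalue of $P_{N_2}P_{N_1}$ and hence is not isolated in the spectrum. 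You instead close the cycle $5\Rightarrow 7\Rightarrow 6\Rightarrow 5$, proving $6\Rightarrow 5$ by contraposition through support functions. That argument is valid but heavier than necessary: once you know $\Spec{P_{N_2}P_{N_1}}\subset[0,c]$ with $c=\cos^2(N_1,N_2)<1$ (your derivation of this, using $P_{N_1\cap N_2}=0$ and the self-adjointness of $P_{N_1}P_{N_2}P_{N_1}$, is fine), the single direction $\alpha=0$ already gives $\FctionSupp{\NumRan{P_{N_2}P_{N_1}}}{0}\le\frac{1}{2}(c+\sqrt{c})<1$ and hence $1\notin\adherence{\NumRan{P_{N_2}P_{N_1}}}$, so the continuity argument for small $\alpha>0$ is superfluous. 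One citation slip: Lemma \ref{RqriTilde} is stated for subspaces in generic position, which is not assumed here; you should invoke Theorem \ref{CoroWP2P1} together with the ellipse support-function lemma instead --- the resulting formula for the support function is identical, so nothing breaks.

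The most valuable part of your analysis is the treatment of (8), where you caught something the paper's one-line ``$5\Leftrightarrow 8$'' glosses over: as literally stated, (8) is equivalent to $\cos(N_1,N_2)<1$, i.e.\ to the \emph{negation} of (1)--(7). Indeed, for two lines in the plane meeting at an angle $\theta_0\in\left]0,\frac{\pi}{2}\right[$ one has $\NumRan{P_{N_2}P_{N_1}}=\Ellipse{\cos^2\theta_0}$, which is contained in a sector with vertex $1$ of half-angle strictly less than $\frac{\pi}{6}$, while $\norme{(P_{N_2}P_{N_1})^n}=\cos^{2n-1}\theta_0\to 0$ exponentially, so (1)--(7) all fail and literal (8) holds. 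Your negated reading --- no sector of half-angle strictly below $\frac{\pi}{6}$ with vertex $1$ contains $\NumRan{P_{N_2}P_{N_1}}$ --- is the only one under which the proposition is true, it follows as you say from Proposition \ref{LemWP2P1dsSecteur} together with its sharpness (itself a consequence of Theorem \ref{CoroWP2P1}, since the ellipses $\Ellipse{\lambda_n}$ sit inside the closed numerical range and any closed sector containing $\NumRan{P_{N_2}P_{N_1}}$ contains its closure), and it matches the author's evident intent: compare condition (k) of the strong-annihilating-pair proposition, where the very same sector condition characterizes the opposite alternative. In short: a correct proof, the same key lemmas, a partly different logical route, and a legitimate correction of a negation error in item (8) of the statement.
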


\begin{proof}
We refer to \cite{Bauschke_Deutsch_Hundal_2009, Catalin_Sophie_Muller_2010} (see also \cite[Chapter 9]{Deustch_2001}) for a proof of the equivalences of the first five assertions.

  $" 6 \Rightarrow 2"$. As $ 1 \in \adherence{\NumRan{P_{N_2}P_{N_1}}} $, we can find a sequence $ (x_n) $ such that $ \norme{x_n} = 1 $ and $ \lim_{n \rightarrow \infty} \ps{P_{N_2}P_{N_1}x_n}{x_n} = 1 $. Since we have that
  \begin{align*}
    \ps{P_{N_2}P_{N_1}x_n}{x_n}
      &\le \norme{P_{N_2}P_{N_1}x_n}\norme{x_n} \\
      &\le \norme{P_{N_2}P_{N_1}x_n} \\
      &\le \norme{P_{N_2}P_{N_1}} \\
      &\le 1,
  \end{align*}
  we have that $\norme{P_{N_2}P_{N_1}}=1$.
  
  $ "4 \Rightarrow 6" $. As $  1 \in \Spec{P_{N_2}P_{N_1}} $ and $\Spec{P_{N_2}P_{N_1}} \subset \adherence{\NumRan{P_{N_2}P_{N_1}}} $, we have that $ 1 \in \adherence{\NumRan{P_{N_2}P_{N_1}}} $.
  
  $ "7 \Rightarrow 6"$. This is clear as $ x_{\lambda_n}(0) = \frac{\sqrt{\lambda_n}}{2} + \frac{\lambda_n}{2} \in \Ellipse{\lambda_n} \subset \adherence{\NumRan{P_2P_1}} $.
  
  $"4 \Rightarrow 7" $. As $N_1 \cap N_2 = \lbrace 0 \rbrace $, 1 is not an eigenvalue of $P_{N_2}P_{N_1}$. So there exist $\lambda_n \in \Spec{P_{N_2}P_{N_1}}$ such that $\lim_n \lambda_n = 1$. The assertion $7$ follows from Theorem \ref{CoroWP2P1}.

  $"5 \Leftrightarrow 8" $. This is a consequence of Lemma \ref{LemWP2P1dsSecteur}.
\end{proof} 
  
\begin{remarque}
  In the spirit of \cite{Catalin_Sophie_Muller_2010}, we can extend $"1 \Leftrightarrow 6"$ to a finite number of projection, to obtain the following statement: If $P_{N_1}, \dots, P_{N_r} $ are orthogonal projections such that $\cap_{i=1}^r N_i =\lbrace 0 \rbrace$, then $(P_{N_r} \dots P_{N_1})^n$ converges arbitrarily slowly to 0 if and only if $1 \in \adherence{\NumRan{P_{N_r} \dots P_{N_1}}}$. The proof is similar.
\end{remarque}

\begin{remarque}
  The equivalences between items 5 through 8 still hold if we drop the assumption that $ N_1 \cap N_2 = \lbrace 0 \rbrace $.
\end{remarque}

  \subsection{Applications to annihilating pairs} 
  In this section we will give new characterizations of annihilating pairs. First we recall the context. We denote by $\F$ the Fourier transform on $\Ldeux(\R)$. Let $S$ and $\Sigma$ be two measurable subsets of $\R $. We denote by $M_g$ the operator of multiplication by $g \in \Linf(\R)$ (i.e.: $M_g(f) = gf $ for $f \in \Ldeux(\R)$). We denote by $\ind{S}$ the indicator function of the subset $S$. Set $P_S = M_{\ind{S}} $ and $P_\Sigma = \F^* M_{\ind{\Sigma}}\F $.

\begin{definition}
  We say that $(S,\Sigma)$ is an \emph{annihilating pair} if for every $f \in \Ldeux(\R) $ we have:
  $$
  P_S f = P_\Sigma f = f \Rightarrow f=0.
  $$
\end{definition}

\begin{definition}
  We say that $(S,\Sigma)$ is a \emph{strong annihilating pair} if there exists a constant $ c>0$ depending on $S,\Sigma$ such that for all $f \in \Ldeux(\R) $ we have:
  $$
  \norme{f}^2 \le c \left( \norme{(I-P_S)f}^2 + \norme{(I-P_\Sigma)f}^2 \right).
  $$
\end{definition}

  We want to recall some known facts (\cite{Havin_Joricke}, and \cite{Lenard_1971}) 
  about (strong) annihilating pairs.

\begin{proposition}
  \label{PropAPair}
  The following assertions are equivalents:
  \begin{itemize}
    \item[1.]  $(S,\Sigma)$ is an annihilating pair
    \item[2.]  $1+\I \notin \NumRan{P_S + \I P_\Sigma}$
    \item[3.]  $\Range{P_S} \cap \Range{P_\Sigma} = \lbrace 0 \rbrace$.
  \end{itemize}
\end{proposition}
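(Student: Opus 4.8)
The plan is to compute the numerical range of $P_S + \I P_\Sigma$ explicitly on unit vectors and to route all three conditions through the elementary fact that, for an orthogonal projection $P$, one has $Pf = f$ exactly when $f \in \Range{P}$.

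First I would dispose of the equivalence $1 \Leftrightarrow 3$, which is essentially a reformulation. Since $P_S$ and $P_\Sigma$ are orthogonal projections, $P_S f = f$ holds iff $f \in \Range{P_S}$, and likewise $P_\Sigma f = f$ iff $f \in \Range{P_\Sigma}$. Thus the set of $f$ satisfying $P_S f = P_\Sigma f = f$ is exactly $\Range{P_S} \cap \Range{P_\Sigma}$, and the defining implication of an annihilating pair says precisely that this intersection reduces to $\lbrace 0 \rbrace$.

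The heart of the argument is the identity, valid for every unit vector $f$ and obtained from $P_S^2 = P_S = P_S^\ast$ and $P_\Sigma^2 = P_\Sigma = P_\Sigma^\ast$:
\[
\ps{(P_S + \I P_\Sigma)f}{f} = \norme{P_S f}^2 + \I\, \norme{P_\Sigma f}^2.
\]
Hence the real part equals $\norme{P_S f}^2 \in [0,1]$ and the imaginary part equals $\norme{P_\Sigma f}^2 \in [0,1]$, so $\NumRan{P_S + \I P_\Sigma}$ lies in the unit square $[0,1] \times [0,1]$, and its corner $1 + \I$ is attained iff there is a unit vector $f$ with $\norme{P_S f}^2 = \norme{P_\Sigma f}^2 = 1$. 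Using the orthogonal decomposition $\norme{f}^2 = \norme{P f}^2 + \norme{(I - P)f}^2$ for $P = P_S$ and $P = P_\Sigma$, the maximality $\norme{P_S f} = \norme{f}$ forces $(I - P_S)f = 0$, i.e. $P_S f = f$, and similarly $P_\Sigma f = f$. Therefore $1 + \I \in \NumRan{P_S + \I P_\Sigma}$ iff there is a unit vector lying in $\Range{P_S} \cap \Range{P_\Sigma}$; as this intersection is a subspace, such a unit vector exists iff the intersection is nontrivial. This yields $2 \Leftrightarrow 3$ and closes the cycle of equivalences.

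There is no deep obstacle here; the one point deserving care is that the numerical range in this paper is defined \emph{without} taking a closure, so I must check that $1 + \I$ is genuinely attained rather than merely approximated. This is exactly why the equivalence is clean: normalizing any nonzero element of $\Range{P_S} \cap \Range{P_\Sigma}$ produces a unit vector on which $\ps{(P_S + \I P_\Sigma)f}{f}$ equals $1 + \I$ exactly, and conversely attaining the corner forces, via the Pythagorean identity above, that $f$ be a common fixed point. Thus the whole proposition rests on recognizing $\NumRan{P_S + \I P_\Sigma}$ as the set of pairs $(\norme{P_S f}^2, \norme{P_\Sigma f}^2)$ and on the maximality characterization of the ranges of the two projections.
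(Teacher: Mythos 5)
Your proof is correct. A point of comparison worth noting: the paper itself gives no proof of this proposition at all --- it is stated as a recalled known fact with citations to Havin--J\"oricke and Lenard --- so your argument supplies a self-contained justification of something the paper delegates to the literature. Your route (computing $\ps{(P_S+\I P_\Sigma)f}{f}=\norme{P_Sf}^2+\I\,\norme{P_\Sigma f}^2$ for unit $f$, locating the numerical range in the square $[0,1]+\I[0,1]$, and using the Pythagorean identity to show the corner $1+\I$ is attained exactly on unit vectors of $\Range{P_S}\cap\Range{P_\Sigma}$) is essentially Lenard's original observation, and it is consistent in spirit with what the paper does later for its \emph{new} characterization $1\notin\NumRan{P_SP_\Sigma}$, where the same norm-attainment mechanism (equality in Cauchy--Schwarz forcing a fixed vector) drives the proof. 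You correctly isolate the one delicate point, namely that $\NumRan{\cdot}$ is taken without closure, so attainment rather than approximation of $1+\I$ is what must be checked; your verification that a normalized element of the intersection attains the corner exactly, and conversely, settles this. The identification of the annihilating-pair condition with $\Range{P_S}\cap\Range{P_\Sigma}=\lbrace 0\rbrace$ via the fixed-point description of orthogonal projections is likewise complete. No gaps.
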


\begin{proposition}
  \label{PropSAPair}
  The following assertions are equivalents:
  \begin{itemize}
    \item[a.]  $(S,\Sigma)$ is a strong annihilating pair
    \item[b.]  $1+\I \notin \adherence{\NumRan{P_S + \I P_\Sigma}}$
    \item[c.]  $\Range{P_S} \cap \Range{P_\Sigma} = \lbrace 0 \rbrace$ and $\cos(P_S,P_\Sigma) < 1 $
    \item[d.]  $\norme{P_SP_\Sigma} < 1$
    \item[e.]  $\RaySpec{P_SP_\Sigma} < 1$
    \item[f.]  $1 \notin \Spec{P_SP_\Sigma}$.
  \end{itemize}
\end{proposition}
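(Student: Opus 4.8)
Throughout I write $P=P_S$, $Q=P_\Sigma$ and set $M_1=\Range{P_S}$, $M_2=\Range{P_\Sigma}$, so that $\cos(P_S,P_\Sigma)=\cos(M_1,M_2)$. The organizing idea is to isolate the single geometric condition
\[
(\star)\qquad \text{there exist unit vectors } f_n \text{ with } \norme{(I-P)f_n}\to 0 \text{ and } \norme{(I-Q)f_n}\to 0,
\]
and to show that the \emph{negations} of items a, b and d are each \emph{literally} equivalent to $(\star)$. The remaining items c, e, f then fall out of the spectral dictionary already developed in the paper. Since $\Spec{P_SP_\Sigma}$ is a compact subset of $[0,1]$ (as in the proof of Lemma \ref{LCosSp}), I will freely use that $\RaySpec{P_SP_\Sigma}=\sup\Spec{P_SP_\Sigma}$ and that the nonzero spectra of products are insensitive to the order of the factors.

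First I would handle a. Because $I-P$ and $I-Q$ are orthogonal projections, $\norme{(I-P)f}^2+\norme{(I-Q)f}^2=\ps{(2I-P-Q)f}{f}$, so a is equivalent to $2I-P-Q\ge \frac{1}{c}I$ for some $c>0$, i.e.\ to $\norme{P+Q}<2$. The failure of a means precisely $\inf_{\norme{f}=1}\big(\norme{(I-P)f}^2+\norme{(I-Q)f}^2\big)=0$, which is exactly $(\star)$. For b I would use the elementary computation $\ps{(P+\I Q)f}{f}=\norme{Pf}^2+\I\norme{Qf}^2$ for $\norme{f}=1$ (the same identity underlying Proposition \ref{PropAPair}); since $\norme{Pf}^2=1-\norme{(I-P)f}^2$ and likewise for $Q$, the point $1+\I$ lies in $\adherence{\NumRan{P_S+\I P_\Sigma}}$ if and only if there are unit vectors with $\norme{Pf_n}\to1$ and $\norme{Qf_n}\to1$, i.e.\ exactly $(\star)$. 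Hence a $\Leftrightarrow$ b, both being the negation of $(\star)$.

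Next I would treat the spectral items. From $\norme{PQ}^2=\norme{QPQ}=\sup\Spec{QPQ}=\sup\Spec{PQ}=\RaySpec{PQ}$ and $\Spec{PQ}\subset[0,1]$, the chain d $\Leftrightarrow$ e $\Leftrightarrow$ f is immediate, since a compact subset of $[0,1]$ has supremum $<1$ exactly when it avoids $1$. To connect this block to $(\star)$ I would prove $\norme{PQ}=1\Leftrightarrow(\star)$: if $(\star)$ holds then $\norme{PQf_n-Pf_n}\le\norme{(I-Q)f_n}\to0$ gives $\norme{PQf_n}\to\norme{Pf_n}\to1$, so $\norme{PQ}=1$; conversely $\norme{PQ}=1$ yields unit $f_n$ with $\norme{PQf_n}\to1$, whence $\norme{Qf_n}\to1$, and replacing $f_n$ by $Qf_n/\norme{Qf_n}\in M_2$ produces unit vectors in $M_2$ with $\norme{Pf_n}\to1$, which is $(\star)$. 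This closes a $\Leftrightarrow$ b $\Leftrightarrow$ d $\Leftrightarrow$ e $\Leftrightarrow$ f. Finally, for c I would split on the intersection: if $M_1\cap M_2\ne\{0\}$, any unit $x\in M_1\cap M_2$ satisfies $P_SP_\Sigma x=x$, so $1\in\Spec{P_SP_\Sigma}$ and both c and f fail; if $M_1\cap M_2=\{0\}$ then $1$ is not an eigenvalue, $P_{M_1\cap M_2}=0$, and $\cos^2(M_1,M_2)=\norme{P_{M_1}P_{M_2}P_{M_1}}=\RaySpec{P_SP_\Sigma}$ (identity following Definition \ref{DefCosineFriederich}), so by Lemma \ref{LCosSp} one has $\cos(M_1,M_2)<1\Leftrightarrow\RaySpec{P_SP_\Sigma}<1\Leftrightarrow 1\notin\Spec{P_SP_\Sigma}$, which is exactly c $\Leftrightarrow$ f.

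The main obstacle is the bridge $\norme{P+Q}=2\Leftrightarrow\norme{PQ}=1$, that is, the equivalence of $(\star)$ with the spectral conditions: one must pass from an approximate eigenvector at the top of the spectrum of the self-adjoint operator $P+Q$ to an approximate eigenvector at the top of the spectrum of $QPQ$. The approximate-common-eigenvector formulation $(\star)$ is precisely what makes this passage transparent, while simultaneously matching it to the numerical-range statement b and the analytic statement a; once this is in place, the equivalences c, e, f are routine consequences of Lemma \ref{LCosSp} and the identity $\norme{PQ}^2=\RaySpec{PQ}$.
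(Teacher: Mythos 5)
Your proof is correct, but note that the paper itself does not prove Proposition \ref{PropSAPair} at all: it is explicitly recalled as a known fact with citations to Havin--J\"oricke and Lenard, and the paper's own new contributions in that section (the characterizations g through k) are then derived \emph{from} it using Theorem \ref{CoroWP2P1}, Proposition \ref{PropLinkRaySpecNumRan} and Corollary \ref{CoroPasPtCritiq}. So what you have supplied is a self-contained elementary replacement for the citation, and it holds up under scrutiny. Your organizing device --- reducing the negations of a, b and d to the single approximate-common-eigenvector condition $(\star)$ --- is sound: the identity $\norme{(I-P)f}^2+\norme{(I-Q)f}^2=\ps{(2I-P-Q)f}{f}$ correctly converts a into $\norme{P+Q}<2$; the computation $\ps{(P+\I Q)f}{f}=\norme{Pf}^2+\I\,\norme{Qf}^2$ together with $\norme{Pf}^2=1-\norme{(I-P)f}^2$ correctly identifies the failure of b with $(\star)$; and both directions of the bridge $\norme{PQ}=1\Leftrightarrow(\star)$ are carried out properly (in particular the normalization trick $f_n\mapsto Qf_n/\norme{Qf_n}$, which places the approximating vectors inside $\Range{P_\Sigma}$, is exactly the step that makes the converse work). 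The chain d $\Leftrightarrow$ e $\Leftrightarrow$ f via $\norme{PQ}^2=\norme{QPQ}=\RaySpec{PQ}$ and the compactness of $\Spec{P_SP_\Sigma}\subset[0,1]$ is fine, and your treatment of c --- splitting on whether $\Range{P_S}\cap\Range{P_\Sigma}$ is trivial, then using $\cos^2(M_1,M_2)=\norme{P_{M_1}P_{M_2}P_{M_1}-P_{M_1\cap M_2}}$ with $P_{M_1\cap M_2}=0$ --- correctly reuses the paper's own spectral dictionary (the identity following Definition \ref{DefCosineFriederich} and Lemma \ref{LCosSp}). What your route buys, compared to simply invoking the references, is that it needs nothing beyond basic facts about positive operators and the two ingredients the paper has already established; it does not depend on the ellipse machinery of Theorem \ref{CoroWP2P1}, which the paper reserves for the genuinely new equivalences that follow.
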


%This are well known fact. You can find most of them in \cite{Havin_Joricke}, and \cite{Lenard_1971}. 
%We will give now some new characterizations of annihilating pair.

The following proposition is a new characterization of annihilating pairs.

\begin{proposition}
  The following assertions are equivalent to the assertions of Proposition \ref{PropAPair}:
  \begin{itemize}
    \item[1.]  $(S,\Sigma)$ is an annihilating pair
    \item[4.]  $1 \notin \NumRan{P_SP_\Sigma} $.
  \end{itemize}
\end{proposition}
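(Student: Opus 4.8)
The plan is to deduce the new equivalence $1\Leftrightarrow 4$ from the equivalence $1\Leftrightarrow 3$ already recorded in Proposition \ref{PropAPair}. Since $(S,\Sigma)$ is an annihilating pair exactly when $\Range{P_S}\cap\Range{P_\Sigma}=\lbrace 0\rbrace$, it suffices to establish the single equivalence
\[
1\in\NumRan{P_SP_\Sigma}\iff \Range{P_S}\cap\Range{P_\Sigma}\neq\lbrace 0\rbrace .
\]
I would view $P_S$ and $P_\Sigma$ as the orthogonal projections onto $M_2=\Range{P_S}$ and $M_1=\Range{P_\Sigma}$, so that $P_SP_\Sigma$ is a product of two orthogonal projections and all the earlier machinery applies; in particular its numerical range sits in the closed unit disc.

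For the implication $(\Leftarrow)$ the argument is direct: if $x\neq 0$ lies in $\Range{P_S}\cap\Range{P_\Sigma}$, normalise so that $\norme{x}=1$; then $P_\Sigma x=x$ and $P_S x=x$ give $\ps{P_SP_\Sigma x}{x}=\ps{x}{x}=1$, whence $1\in\NumRan{P_SP_\Sigma}$. The implication $(\Rightarrow)$ is the crux, and it rests on the Cauchy--Schwarz tightness observation already used in the example of Section~3.2. Assuming $1\in\NumRan{P_SP_\Sigma}$, write $\ps{P_SP_\Sigma x}{x}=1$ with $\norme{x}=1$. The chain
\[
1=\ps{P_SP_\Sigma x}{x}\le\norme{P_SP_\Sigma x}\,\norme{x}\le\norme{P_\Sigma x}\le\norme{x}=1
\]
forces equality at every step. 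Equality in the Cauchy--Schwarz step gives $P_SP_\Sigma x=\mu x$ for a scalar $\mu$, and taking the inner product with $x$ forces $\mu=1$, so $P_SP_\Sigma x=x$; the equality $\norme{P_\Sigma x}=\norme{x}$ gives $P_\Sigma x=x$, i.e.\ $x\in\Range{P_\Sigma}$. Substituting $P_\Sigma x=x$ into $P_SP_\Sigma x=x$ yields $P_S x=x$, i.e.\ $x\in\Range{P_S}$, so $x$ is a nonzero vector of $\Range{P_S}\cap\Range{P_\Sigma}$.

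Combining the two implications with the equivalence $1\Leftrightarrow 3$ of Proposition \ref{PropAPair} yields the desired equivalence $1\Leftrightarrow 4$. The only delicate point is the equality analysis in the direction $(\Rightarrow)$: because $\adherence{\NumRan{P_SP_\Sigma}}$ lies in the closed unit disc, the value $1$ is an extreme point and can be attained only at a genuine eigenvector with eigenvalue $1$. This is precisely the reasoning carried out earlier in the paper, which I would reuse, so no new difficulty arises beyond bookkeeping the two range conditions.
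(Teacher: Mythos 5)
Your proposal is correct and follows essentially the same route as the paper: both reduce assertion 4 to condition 3 of Proposition \ref{PropAPair} by showing that $1 \in \NumRan{P_SP_\Sigma}$ forces, via the Cauchy--Schwarz equality chain $1=\ps{P_SP_\Sigma x}{x}\le\norme{P_SP_\Sigma x}\le\norme{P_\Sigma x}\le\norme{x}=1$, that $x$ is a common fixed vector of the two projections. The paper merely compresses this into the intermediate statement that $1\in \NumRan{P_SP_\Sigma}$ is equivalent to the existence of $h$ with $\norme{P_SP_\Sigma h}=\norme{h}=1$, leaving the eigenvector step you spell out implicit.
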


\begin{proof}
  We have that $1 \in \NumRan{P_SP_\Sigma} $ if and only if there exist $h \in H$ such that $\norme{h}=1$ ad $\ps{P_SP_\Sigma h}{h}=1 $. This is equivalent to the existence of some $h \in H$ such that $\norme{P_SP_\Sigma h}=\norme{h}=1 $.  This last assertion is equivalent to the negation of (3) in Proposition \ref{PropAPair}.
\end{proof}

\begin{proposition}
  The following assertions are equivalent to the assertions of Proposition \ref{PropSAPair}:
  \begin{itemize}
    \item[a.]  $(S,\Sigma)$ is a strong annihilating pair
    \item[g.]  $1 \notin \adherence{\NumRan{P_SP_\Sigma}} $
    \item[h.]  $\NumRay{P_SP_\Sigma} < 1$
    \item[i.]  for all $\alpha \in [0,\frac{\pi}{3}], \NumRay{\Reel{\exp(-\I \alpha) P_SP_\Sigma}}< \cos(\alpha) $
    \item[j.]  there exists $\alpha \in [0,\frac{\pi}{3}]$ such that $\NumRay{\Reel{\exp(-\I \alpha) P_SP_\Sigma}}< \cos(\alpha)$
    \item[k.]  there exists $\theta < \frac{\pi}{6} $ such that $\NumRan{P_SP_\Sigma} \subset \lbrace z \in \C, \abs{\arg(1-z)}\le \theta \rbrace \setminus \lbrace 1 \rbrace $.
  \end{itemize}
\end{proposition}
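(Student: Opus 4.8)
The plan is to show that each of the new conditions $g$--$k$ is equivalent to one of the assertions of Proposition~\ref{PropSAPair} that are already known to be equivalent. Throughout, note that $P_SP_\Sigma$ is a product of the two orthogonal projections onto $M_2=\Range{P_S}$ and $M_1=\Range{P_\Sigma}$, so every result proved above for $P_{M_2}P_{M_1}$ applies. Since $\Spec{P_SP_\Sigma}$ is a compact subset of $[0,1]$, the quantity $\lambda_{\max}=\RaySpec{P_SP_\Sigma}=\max\Spec{P_SP_\Sigma}$ is attained, and I will use that the conditions $e$ and $f$ of Proposition~\ref{PropSAPair} both amount to $\lambda_{\max}<1$.

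First I would treat $g$ and $h$ together using Proposition~\ref{PropLinkRaySpecNumRan}, which gives $\NumRay{P_SP_\Sigma}=\frac{1}{2}(\sqrt{\lambda_{\max}}+\lambda_{\max})$. The map $r\mapsto\frac12(\sqrt r+r)$ is strictly increasing on $[0,1]$ and equals $1$ exactly at $r=1$, so $\NumRay{P_SP_\Sigma}<1$ if and only if $\lambda_{\max}<1$; this is $h\Leftrightarrow e$. For $g$, one inclusion is immediate from $\Spec{P_SP_\Sigma}\subset\adherence{\NumRan{P_SP_\Sigma}}$: if $1\notin\adherence{\NumRan{P_SP_\Sigma}}$ then $1\notin\Spec{P_SP_\Sigma}$. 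For the converse, if $1\in\adherence{\NumRan{P_SP_\Sigma}}$ then $\NumRay{P_SP_\Sigma}\ge\abs{1}=1$, whence $\lambda_{\max}=1$ and $1\in\Spec{P_SP_\Sigma}$; this is $g\Leftrightarrow f$.

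Next I would handle $i$ and $j$. With $g_\alpha(\lambda)=\frac12(\cos(\alpha)\lambda+\sqrt{\lambda(1-\sin(\alpha)^2\lambda)})$ as before, Corollary~\ref{CoroPasPtCritiq} shows $g_\alpha$ has no critical point in $]0,1[$ for $\alpha\in[0,\frac{\pi}{3}]$, so $g_\alpha$ is strictly increasing on $[0,1]$; hence $\FctionSupp{\NumRan{P_SP_\Sigma}}{\alpha}=\sup_{\lambda\in\Spec{P_SP_\Sigma}}g_\alpha(\lambda)=g_\alpha(\lambda_{\max})$, while the support function of $\mathscr S=\adherence{\Con{}{\cup_{\lambda\in[0,1]}\Ellipse{\lambda}}}$ equals $g_\alpha(1)=\cos(\alpha)$. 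Using the identity $\NumRay{\Reel{\exp(-\I\alpha)P_SP_\Sigma}}=\FctionSupp{\NumRan{P_SP_\Sigma}}{\alpha}$ valid for $\alpha\in[0,\frac{\pi}{2}]$ (the self-adjoint case treated at the end of the previous section), the inequality $\NumRay{\Reel{\exp(-\I\alpha)P_SP_\Sigma}}<\cos(\alpha)$ becomes $g_\alpha(\lambda_{\max})<g_\alpha(1)$, i.e.\ $\lambda_{\max}<1$ by strict monotonicity. As this is independent of $\alpha$, the ``for all $\alpha$'' and ``there exists $\alpha$'' versions coincide, giving $i\Leftrightarrow j\Leftrightarrow e$.

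Finally, for $k$ I would invoke Proposition~\ref{LemWP2P1dsSecteur} together with its sharpness: the smallest sector $\{z:\abs{\arg(1-z)}\le\theta\}$ containing $\NumRan{P_SP_\Sigma}$ has half-angle $\theta_0=\arctan\bigl(\sqrt{\cos^2(M_1,M_2)/(4-\cos^2(M_1,M_2))}\bigr)$, and an elementary computation gives $\theta_0<\frac{\pi}{6}$ if and only if $\cos(M_1,M_2)<1$. The extra requirement in $k$ that the sector be taken minus $\lbrace 1\rbrace$ simply forces $1\notin\NumRan{P_SP_\Sigma}$, and (as in the Example of Section~3) $1\in\NumRan{P_SP_\Sigma}$ holds precisely when $1$ is an eigenvalue, i.e.\ when $\Range{P_S}\cap\Range{P_\Sigma}\ne\lbrace 0\rbrace$. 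Thus $k$ is equivalent to the conjunction $\cos(M_1,M_2)<1$ and $\Range{P_S}\cap\Range{P_\Sigma}=\lbrace 0\rbrace$, which is exactly item $c$. The main obstacle I anticipate is this last step: one must separate the case where $1$ lies in the approximate spectrum only (so $\cos(M_1,M_2)=1$ by Lemma~\ref{LCosSp} and $\theta_0=\frac{\pi}{6}$, ruling out $k$) from the case where $1$ is a genuine eigenvalue (so $1\in\NumRan{P_SP_\Sigma}$ and the $\setminus\lbrace 1\rbrace$ clause fails), and verify that in both situations $k$ fails exactly when $c$ does.
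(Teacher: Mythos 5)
Your proposal is correct and follows essentially the same route as the paper, which likewise reduces $h$ to $e$ via Proposition \ref{PropLinkRaySpecNumRan}, reduces $i$ and $j$ to $f$ via Corollary \ref{CoroPasPtCritiq}, and reduces $k$ to $c$ via Proposition \ref{LemWP2P1dsSecteur} (with its sharpness) together with the preceding characterization $1 \notin \NumRan{P_SP_\Sigma} \Leftrightarrow \Range{P_S}\cap\Range{P_\Sigma}=\lbrace 0 \rbrace$. Your handling of $g$ through the numerical-radius formula is a minor variant of the paper's (which instead notes $1\in\adherence{\NumRan{P_SP_\Sigma}}$ iff $\Ellipse{1}\subset\adherence{\NumRan{P_SP_\Sigma}}$ iff $1\in\Spec{P_SP_\Sigma}$ by Theorem \ref{CoroWP2P1}), and the details you fill in --- strict monotonicity of $g_\alpha$ on $[0,1]$ and the separation of the approximate-spectrum case from the eigenvalue case in $k$ --- are exactly what the paper's terse citations leave implicit.
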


\begin{proof}
  $``f \Leftrightarrow g''$. By Theorem \ref{CoroWP2P1}, $1 \in  \adherence{\NumRan{P_SP_\Sigma}} $ if and only if $\Ellipse{1} \subset \adherence{\NumRan{P_SP_\Sigma}} $, if and only if $1 \in \Spec{P_SP_\Sigma}$.

  $``e \Leftrightarrow h''$. This is a direct consequence of Proposition \ref{PropLinkRaySpecNumRan}.

  $``f \Rightarrow i''$. This is a consequence of Corollary \ref{CoroPasPtCritiq}.

  $``i \Rightarrow j''$ This is trivial.

  $``j \Rightarrow f''$ This is a consequence of Corollary \ref{CoroPasPtCritiq}.

  $``c \Leftrightarrow k''$ This consequence of Lemma \ref{LemWP2P1dsSecteur}, and of the previous Proposition.
\end{proof}

\section*{Acknowledgment}

  I would like to thank Catalin Badea for several discussions and for his help to improve this paper, and Gustavo Corach for pointing out to me the content of Remark \ref{RqWIdempotent}. I would also like to thank the referee for the careful reading of the manuscript and helpful comments.

\bibliographystyle{alpha}
\bibliography{Biblio}

%\nocite{*}

\end{document}